\documentclass[11pt,leqno]{amsart}
\usepackage{amsmath,amssymb,amsfonts, amscd,   
pifont, amsthm,  amsxtra, latexsym, 
}
\usepackage{lmodern}
\usepackage[all]{xy}
\usepackage{here}
\usepackage{verbatim}
\usepackage{graphicx}

\usepackage{psfrag}
\usepackage{amsmath}
\usepackage{amsthm}
\usepackage{amsfonts}
\usepackage{amssymb}
\usepackage{extarrows} 
\usepackage{colortbl} 
\usepackage{pstricks,pst-arrow} 
\usepackage{fancybox}
\numberwithin{equation}{section} %

\usepackage[colorlinks=true,linkcolor=blue,citecolor=blue,urlcolor=blue]{hyperref}

\newtheorem{theorem}{Theorem}[section]

\newtheorem{proposition}[theorem]{Proposition}
\newtheorem{lemma}[theorem]{Lemma}
\newtheorem{corollary}[theorem]{Corollary}
\theoremstyle{definition}
\newtheorem{definition}[theorem]{Definition}
\newtheorem{example}[theorem]{Example}
\newtheorem{fact}[theorem]{Fact}

\theoremstyle{remark}
\newtheorem{remark}[theorem]{Remark}

\numberwithin{equation}{section} %

\usepackage[parfill]{parskip}

\begin{document}
\author{Toshiyuki KOBAYASHI}
\thanks{
Graduate School of Mathematical Sciences,
The University of Tokyo,
3-8-1 Komaba, Tokyo 153-8914, Japan,
and
French-Japanese Laboratory in Mathematics and its Interactions,
FJ-LMI CNRS IRL2025, Tokyo, Japan.
\email{toshi@ms.u-tokyo.ac.jp}
}

\title
[Stability of Branching Multiplicities]
{Stability of Branching Multiplicities
for Orthogonal Gelfand Pairs}

\begin{abstract}
We propose a structural framework for branching multiplicities in representation theory, emphasizing their behavior under variation of infinitesimal characters. 
For the orthogonal reductive pairs $(G,G')$ with complexified Lie algebras
$(\mathfrak{o}(n+1,\mathbb{C}), \mathfrak{o}(n,\mathbb{C}))$, we show that branching
multiplicities are governed by universal systems of linear inequalities on the
parameter space of reduced coherent families introduced in this paper.

To describe the loci where multiplicities may change, we introduce \emph{fences}: piecewise-linear hypersurfaces that divide the parameter space into convex regions. We prove that the multiplicity function is locally constant on each such region bounded by these fences.

The framework applies uniformly to finite-dimensional representations and to admissible smooth Fr\'echet representations of real reductive groups. It accounts for classical results such as the Weyl branching law and provides a unified explanation for a range of phenomena, including the Gross--Prasad conjecture, sporadic symmetry breaking operators, and fusion rules for Verma modules.

These results establish a general paradigm for branching multiplicities in orthogonal Gelfand pairs.
\end{abstract}
\subjclass[2020]{22E46, 22E45, 22E30}
\maketitle

\setcounter{tocdepth}{1}
\tableofcontents
\section{Introduction}
Branching laws describe how representations of a group decompose upon restriction
to a subgroup.
A central problem in this theory is to understand the multiplicities
appearing in such decompositions.
While these multiplicities have been computed in a variety of individual cases,
a conceptual understanding of their global structure has long remained elusive.

To motivate our approach, we begin with four representative examples that exhibit a common underlying pattern:

\begin{enumerate}
\item[(1)] \textbf{(Weyl's branching law \cite{Weyl97})}  
Let $V_{\sigma}$ and $W_{\tau}$ denote irreducible finite-dimensional representations of $GL(n)$ and $GL(n-1)$, respectively, and let $[V_{\sigma}:W_{\tau}]$ denote the multiplicity of $W_{\tau}$ in $V_{\sigma}$. 
This multiplicity is either $0$ or $1$. 
Moreover, the nonvanishing condition is governed by the interlacing inequalities for highest weights $\sigma=(\sigma_1, \dots, \sigma_n)$ and $\tau=(\tau_1, \dots, \tau_{n-1})$:
\[
   [V_{\sigma}:W_{\tau}] \neq 0
\ \Longleftrightarrow \
\sigma_1 \ge \tau_1 \ge \sigma_2 \ge \cdots \ge \tau_{n-1} \ge \sigma_n.
\]
An analogous multiplicity-free branching law holds for the restriction $O(n) \downarrow O(n-1)$.

\item[(2)] \textbf{(Fusion rules for Verma modules)}  
Let $M(a)$, $M(b)$, and $M(c)$ be Verma modules of $\mathfrak{sl}(2,\mathbb{C})$. 
Generically, the multiplicity
\[
m(a,b,c)
= \dim \operatorname{Hom}_{\mathfrak{sl}(2,\mathbb{C})}\bigl(M(c), M(a)\otimes M(b)\bigr)
\]
is at most one. 
However, this multiplicity jumps to two precisely when the parameters satisfy the explicit inequalities
\[
 a + b + c \le -2, \qquad |a-b| \le -c -2,
\]
together with the parity conditions $a+b-c \in 2\mathbb{N}$ and $a,b,c \in \mathbb{Z}$.
This jumping phenomenon extends to the restriction of parabolic Verma modules for $\mathfrak{gl}(n,\mathbb{C})$ to $\mathfrak{gl}(n-1,\mathbb{C})$, where it occurs exactly at parameter values for which certain Jacobi polynomials vanish \cite[Thm.~8.1]{KP16}.

\item[(3)] \textbf{(The Gross--Prasad conjecture in the real case \cite{GP92})}
This conjecture generalizes Weyl's branching law and points toward a deep
understanding of the nonvanishing condition
$
[\Pi_{\lambda}:\pi_{\nu}] \neq 0,
$
where $\Pi_{\lambda}$ and $\pi_{\nu}$ are discrete series representations of
$G = O(p,q)$ and $G' = O(p-1,q)$, respectively.
For the pair $(G,G') = (U(p,q),\, U(p-1,q))$, a precise nonvanishing condition for
the multiplicity $[\Pi_\lambda:\pi_\nu]$ is described in \cite{He} by explicit
interleaving conditions on $\lambda$ and $\nu$, based on the combinatorics of the
theta correspondence.

\item[(4)] \textbf{(Sporadic conformally covariant operators)}  
On the space of differential forms on the pair of spheres $S^n \supset S^{n-1}$, 
there exist ``sporadic'' symmetry breaking operators that occur only at exceptional, countably many parameter values and cannot be realized as residues of rational families \cite[vol.~II]{xksbonvecI}. 
These operators form a new class, distinct from Juhl’s conformally covariant operators \cite{J09}. 
At such parameter values, the branching multiplicity for the pair $(O(n,1),O(n-1,1))$ jumps from zero to one.
\end{enumerate}

These examples point to a common structural phenomenon, yet a systematic understanding
of the underlying global structure has long been lacking.

\medskip

\textbf{Wall crossing versus fences.}
To understand how branching multiplicities depend on the parameters of representations,
one must note that, without crossing any classical walls associated with the variation
of representations of a \emph{single group}, the multiplicities in branching laws can
still vary when the parameters remain within a single dominant chamber.
The behavior of branching multiplicities is therefore more subtle, and depends essentially on the \emph{relative position of the infinitesimal characters of two groups}.
This distinction explains why traditional arguments based on Weyl chambers and wall-crossing fail to capture the stability regions and jumps observed in branching laws.

The boundaries across which branching multiplicities may change are therefore not walls in the conventional sense, but rather piecewise-linear hypersurfaces determined by linear relations between the infinitesimal characters of $G$ and $G'$.
These hypersurfaces admit a precise description in terms of the associated multi-signatures, and we refer to them as \emph{fences}.

\medskip

Rather than relying on representation-by-representation arguments, our aim is to identify a universal mechanism underlying such systems of inequalities and to formulate it in an intrinsic  framework. 
More precisely, we provide a uniform description of the loci in parameter space where branching multiplicities may change, expressed purely in terms of infinitesimal characters and independent of the choice of real form or of a particular realization of the representations.
To formulate our results for infinite-dimensional representations, the notion of multiplicity requires some care, as it depends intrinsically on the topology of the representations involved.

{\textbf{Multipllicity in the nonunitary setting.}}
Let $G$ be a real reductive Lie group, and let $\mathcal{M}(G)$ denote the category of smooth admissible representations of finite length and moderate growth, realized on Fr\'echet topological vector spaces \cite[Chap.~11]{Wa92}.  
By the Casselman--Wallach theory, this category is canonically equivalent to the category of $(\mathfrak{g},K)$-modules of finite length.  
We write $\operatorname{Irr}(G)$ for the set of irreducible objects in $\mathcal{M}(G)$.

For $\Pi \in \mathcal{M}(G)$ and $\pi \in \mathcal{M}(G')$, we define the space of \emph{symmetry breaking operators} by
\[
\operatorname{Hom}_{G'}(\Pi|_{G'}, \pi),
\]
namely, the space of continuous $G'$-homomorphisms with respect to the Fr\'echet topology.
The \emph{branching multiplicity} is defined as the dimension of this space, and is denoted by
\[
[\Pi|_{G'} : \pi], \quad \text{or simply } [\Pi : \pi].
\]

Our primary objective is to understand how these multiplicities vary as the representation parameters change.
As the four examples above illustrate, branching multiplicities are governed by a subtle and highly nontrivial interaction between the parameters of $G$ and those of $G'$, involving delicate parity conditions and intricate interleaving patterns, even when both vary within their respective dominant chambers.

\medskip
\noindent
\textbf{Reduced coherent families and parity.}
Motivated by this observation, we introduce the notion of a
\emph{reduced coherent family} (Definition~\ref{def:26032915}),
which is designed to capture, in an intrinsic manner, the parity conditions arising in branching laws.
Within this framework, parity conditions that appear in concrete branching problems are no longer imposed externally,
but are instead encoded intrinsically in the structure of the parameter space.
As a consequence, we establish a \emph{stability principle} for branching multiplicities,
showing that they remain constant on certain convex regions of the parameter space.

\medskip
\textbf{Multi-signature and interleaving patterns.}
To describe these convex regions, we introduce for each pair $(\lambda,\nu)$
of infinitesimal characters a \emph{multi-signature},
\[
\operatorname{sgn}(\lambda,\nu),
\]
which encodes the relative configuration of the infinitesimal characters of
$G$ and $G'$.

In terms of \emph{reduced coherent families} and
\emph{multi-signatures}, we formulate stability results for branching multiplicities without recourse to case-by-case parity assumptions,
and identify precisely the loci at which multiplicity jumps may occur, in terms of changes in the relative configuration of infinitesimal characters across fences.

\begin{theorem}[Stability of multiplicity; informal version.
See Theorem~\ref{thm:250515} for a precise formulation]
Let $G \supset G'$ be a real form of
$(O(n+1,\mathbb{C}), O(n,\mathbb{C}))$.
Let $\xi$ be a nonsingular weight, and let
$\{\Pi_{\lambda} : \lambda \in \Lambda(\xi)\}$
be a reduced coherent family of $G$-modules.
Let $\pi \in \mathcal{M}(G')$ have infinitesimal character $\nu$.
Assume that $(\xi,\nu)$ is away from fences.
Then the branching multiplicity
\[
   [\Pi_{\lambda}|_{G'} : \pi]
\]
is constant for all $\lambda \in \Lambda(\xi)$ satisfying
\begin{equation}
\label{eqn:multi_sgn}
   \operatorname{sgn}(\lambda,\nu)
   =
   \operatorname{sgn}(\xi,\nu).
\end{equation}
\end{theorem}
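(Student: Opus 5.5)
The natural strategy is translation functors, i.e.\ Zuckerman tensoring with finite-dimensional representations, applied to the tensor product $\Pi_\lambda \boxtimes \pi^\vee$ of $G \times G'$ rather than to $\Pi_\lambda$ alone. I would use the standard identification of symmetry breaking operators with $\operatorname{Hom}_{G'}(\Pi|_{G'}\otimes \pi^\vee,\mathbb{C})$. Then, for a finite-dimensional $G$-module $F$, I would compare $[(\Pi\otimes F)|_{G'}:\pi]$ with $[\Pi|_{G'}:\pi\otimes F^\vee|_{G'}]$ using the adjunction $\operatorname{Hom}_{G'}(\Pi\otimes F|_{G'},\pi)\simeq\operatorname{Hom}_{G'}(\Pi|_{G'},\pi\otimes F^\vee|_{G'})$.

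Given two parameters $\lambda,\lambda'\in\Lambda(\xi)$ with the same multi-signature relative to $\nu$, I would reduce to adjacent parameters $\lambda'=\lambda+\epsilon e_i$. The region $\{\lambda:\operatorname{sgn}(\lambda,\nu)=\operatorname{sgn}(\xi,\nu)\}$ is convex and cut out by the fences, so any two of its points can be joined inside it by such elementary steps. For one step I would take $F=\mathbb{C}^{n+1}$, the natural representation of $O(n+1)$, so that $F|_{O(n)}=\mathbb{C}^n\oplus\mathbb{C}$. The reduced coherent family should be set up (Definition~\ref{def:26032915}) so that
\[
\Pi_{\lambda'} = P_{\lambda'}(\Pi_\lambda\otimes F),
\]
where $P_{\lambda'}$ is the projection to generalized infinitesimal character $\lambda'$. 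This holds because $\lambda$ and $\lambda'$ lie in the same chamber, $\xi$ is nonsingular, and the parity is built into $\Lambda(\xi)$.

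Next I would compute
\[
\operatorname{Hom}_{G'}\bigl(P_{\lambda'}(\Pi_\lambda\otimes F)|_{G'},\pi\bigr).
\]
Since $\operatorname{Hom}_{G'}(\cdot,\pi)$ only sees the $\mathfrak{Z}(\mathfrak{g}')$-generalized eigenspace $\nu$, I need two things.
\begin{enumerate}
\item[(a)] The other $G$-primary components of $\Pi_\lambda\otimes F$, namely $\lambda\pm e_j$ for $j$ other than the chosen step, contribute nothing after restriction and pairing with $\pi$.
\item[(b)] Conversely, in $\Pi_\lambda|_{G'}\otimes(\mathbb{C}^n\oplus\mathbb{C})$ paired against $\pi$, only the summand $\mathbb{C}$ contributes, and the components $\pi\otimes\mathbb{C}^n$ with character $\nu\pm e_k$ do not.
\end{enumerate}
Combining (a) and (b) gives $[\Pi_{\lambda'}:\pi]=[\Pi_\lambda:\pi]$. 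Both vanishing statements would follow from a necessary condition on infinitesimal characters for nonzero symmetry breaking. I would establish this on the $\mathfrak{Z}(\mathfrak{g})\otimes\mathfrak{Z}(\mathfrak{g}')$ level via the Harish-Chandra homomorphism, for instance as a support condition on $\mathfrak{Z}(\mathfrak{g})\otimes\mathfrak{Z}(\mathfrak{g}')$-finite functionals, in the spirit of the results of Kobayashi--Oshima on finiteness. In the most usable form, $\operatorname{Hom}_{G'}(X|_{G'},Y)\neq0$ for modules with generalized infinitesimal characters $(\mu,\nu)$ forces strong constraints only when some coordinate of $\mu$ coincides with one of $\nu$ up to sign. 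The hypothesis that $(\xi,\nu)$ is away from fences, together with the unchanged multi-signature, says exactly that the shifts $\lambda\pm e_j$ and $\nu\pm e_k$ do not create or destroy such coincidences or interlacing flips.

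I expect the main obstacle to be step (a)/(b): turning the multi-signature condition into a genuine vanishing of the unwanted summands. Infinitesimal-character bookkeeping alone does not kill them, since both groups' centers act and the restriction mixes eigenvalues. What I would try first is to factor through the diagonal: view symmetry breaking as $\Delta(G')$-invariants in $\Pi\boxtimes\pi^\vee$ and use that $\mathfrak{Z}(\mathfrak{g}')$ acting on $\Pi|_{G'}$ is matched with its action on $\pi$ through the Capelli-type relation for $(\mathfrak{o}(n+1),\mathfrak{o}(n))$. For this Gelfand pair, $U(\mathfrak{g})^{G'}$ is generated by $\mathfrak{Z}(\mathfrak{g})$ and $\mathfrak{Z}(\mathfrak{g}')$ (Knop), and the characteristic polynomial relations between the two sets of Casimirs give polynomial identities in $(\lambda,\nu)$. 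I would then check that the relevant polynomial is nonzero on the shifted parameters precisely off the fences, so that its invertibility forces the unwanted Hom spaces to vanish.

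A secondary subtlety is that $P_{\lambda'}(\Pi_\lambda\otimes F)$ must be exactly $\Pi_{\lambda'}$ in $\mathcal{M}(G)$ and not merely in the Grothendieck group. Nonsingularity of $\xi$ and exactness of translation within a chamber should handle this.
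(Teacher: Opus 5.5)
Your reduction to unit steps $\lambda\to\lambda'=\lambda+\varepsilon e_i$ inside the region matches the paper. So does the identification $\Pi_{\lambda'}\simeq P_{\lambda'}(\Pi_\lambda\otimes F)$, which is built into the definition of a reduced coherent family and needs no further argument.

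The gap is in (a) and (b). Both vanishing statements are false in general, and they fail precisely in the interior of the interleaving regions, so no identity in $U(\mathfrak{g})^{G'}$ can prove them. Take $(G,G')=(O(3),O(2))$, $\Pi_\lambda=F^{O(3)}(m)_{\pm}$ with $\lambda=m+\tfrac12$, and $\pi=F^{O(2)}(k)$ with $\nu=k$ and $1\le k\le m-1$; this pair is away from fences.
\begin{itemize}
\item The primary components of $\Pi_\lambda\otimes\mathbb{C}^3$ have highest weights $m+1$, $m$, $m-1$. Each contains $\pi$ upon restriction, so the components $\tau=0$ and $\tau=-e_1$ do contribute.
\item Dually, $\pi\otimes\mathbb{C}^2$ contributes $2$ to $\operatorname{Hom}_{G'}(\Pi_\lambda|_{G'},\pi\otimes F^{\vee}|_{G'})$.
\end{itemize}
The adjunction only yields the total count
\[
\sum_{\tau}\bigl[P_{\lambda+\tau}(\Pi_\lambda\otimes F)|_{G'}:\pi\bigr]=\bigl[\Pi_\lambda|_{G'}:\pi\bigr]+\bigl[\Pi_\lambda|_{G'}:\pi\otimes\mathbb{C}^n\bigr],
\]
and this cannot isolate the single summand $\tau=\varepsilon e_i$. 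Your heuristic is actually backwards. In the compact case, an unchanged interleaving pattern is the condition for \emph{nonvanishing} (Weyl's law). Moreover, membership of $\lambda,\lambda'$ in $\mathcal{D}(\xi,\nu)$ gives no control over the shifts $\lambda\pm e_j$ ($j\neq i$) or $\nu\pm e_k$.

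The missing idea is to replace the dimension count by an explicit injection, used in both directions. The paper sends $T\in\operatorname{Hom}_{G'}(\Pi_\lambda|_{G'},\pi)$ to the restriction of $T\otimes\operatorname{pr}_{F\to F'}$ to $P_{\lambda'}(\Pi_\lambda\otimes F)\simeq\Pi_{\lambda'}$. It then proves the universal scalar identity
\[
(T\otimes\operatorname{pr}_{F\to F'})\circ P_{\lambda+\varepsilon e_i}\circ(\operatorname{id}_{\Pi}\otimes\iota_{F'\to F})=\frac{g_{i,\varepsilon}(\lambda,\nu)}{\varphi_{i,\varepsilon}(\lambda)}\,T .
\]
The proof of this identity has three ingredients.
\begin{itemize}
\item Up to the factor $\varphi_{i,\varepsilon}(\lambda)$, $P_{\lambda+\varepsilon e_i}$ is written as a polynomial in the Casimir element.
\item The $f_0$-component of $\widetilde c_G^{\,N}(u\otimes f_0)$ is $A^{(N)}u$, where $A^{(N)}\in U(\mathfrak{g})^{G'}$ is explicitly constructed. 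Hence it acts on symmetry breaking operators by the scalar $\chi_{\lambda,\nu}(A^{(N)})$.
\item The resulting polynomial is pinned down by Weyl group invariance, a degree bound, zeros forced by the compact branching law, and explicit normalizations.
\end{itemize}
Now suppose $\lambda$ and $\lambda+\varepsilon e_i$ both lie in $\mathcal{D}(\xi,\nu)$. Then $\lambda_i+\tfrac12\varepsilon\neq\pm\nu_j$ for all $j$, since otherwise $\lambda_i\mp\nu_j$ would change sign; for $n$ even, also $2\lambda_i+\varepsilon\neq0$, because both points lie in $D(\xi)_>$. So the scalar is defined and nonzero, the map is injective, and $[\Pi_{\lambda'}|_{G'}:\pi]\ge[\Pi_\lambda|_{G'}:\pi]$. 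Since $\lambda'_i-\tfrac12\varepsilon=\lambda_i+\tfrac12\varepsilon$, the reverse step gives the opposite inequality.

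Your Capelli-type idea points in the right direction, but it has to be aimed at this nonvanishing scalar rather than at the vanishing of the other summands.
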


Here $\xi$ plays the role of a fixed base point determining a reduced coherent family,
while $\lambda$ varies within the associated parameter lattice.

For a given $\pi$, the branching multiplicity is often easy to determine at a suitably chosen
base point $\Pi_\xi$.
Theorem~1.1 then propagates this information:
it shows that the same multiplicity $[\Pi_\lambda|_{G'}:\pi]$ holds for every $\lambda$
sharing the same interleaving pattern with $\nu$.
In this way, Theorem~1.1 reduces the computation of branching multiplicities
to their determination at a single base point in each interleaving region.
\medskip

The equality \eqref{eqn:multi_sgn} of multi-signatures encodes an explicit system of linear inequalities in the parameter $\lambda$.  
These inequalities describe interlacing (or, more generally, \emph{interleaving}) patterns between the infinitesimal characters of $G$ and $G'$.  
Consequently, the parameter space decomposes into regions on which the branching multiplicity is constant; the boundaries of these convex regions are piecewise-linear hypersurfaces, which we refer to as \emph{fences}.

In contrast to classical wall-crossing phenomena, which are governed by inequalities involving a single infinitesimal character, fences arise from the \emph{relative} position of the infinitesimal characters of two groups.
These \emph{fences} yield a finer partition of the parameter space than that provided by dominant chambers alone, and may be viewed as a genuinely two-body analogue of wall-crossing phenomena, reflecting the intrinsically relative nature of branching problems.

The above stability principle recovers classical results such as Weyl’s branching law,
explains the occurrence of multiplicity jumps for Verma modules,
and provides a natural framework for Gross--Prasad multiplicities,
as well as for the appearance of sporadic symmetry breaking operators.
In this way, it unifies a range of phenomena that had previously been understood only in isolated or case-dependent settings.

Related results for pairs $(G,G')$ of real forms of
$(GL(n+1,\mathbb{C}), GL(n,\mathbb{C}))$,
including universal scalar identities and nonvanishing theorems for symmetry breaking operators,
were obtained in \cite{HKS, KS26} under stronger assumptions on the primary components arising in translation,
where the main emphasis was on applications to Shimura varieties and to branching laws for representations such as $A_{\mathfrak q}(\lambda)$.
By contrast, the present framework—based on reduced coherent families and multi-signatures—
is genuinely new and provides a refined structural description of branching multiplicities,
tailored to the orthogonal setting and extending beyond the scope of the general linear case.
The techniques introduced here have further consequences beyond the orthogonal setting,
including refinements of earlier results for the general linear case,
which will be discussed elsewhere.

The present paper is devoted to the conceptual and structural aspects of branching multiplicities.
Further applications of this framework, including new branching laws and additional sporadic phenomena,
will be explored in subsequent work.

\vskip 1pc
\par\noindent
{\bf{Convention.}}\enspace
We write
${\mathbb{N}}:=\{0,1,2,\dots,\}$ and
${\mathbb{N}}_+:=\{1,2,\dots,\}$.  

\section{Preliminaries}
\label{sec:pre}
In this section, we collect the basic notions and conventions used throughout the paper. 
Our focus is on infinitesimal characters, coherent continuation, and their reduced variants for real forms of $O(n+1,\mathbb{C})$. 
We pay particular attention to situations in which the group is not of Harish-Chandra class, and to formulating the relevant structures in a uniform fashion, independent of the choice of real form.

After a brief review of well-known material in Section~\ref{subsec:known}, we introduce \emph{reduced coherent families} of representations. 
These are tailored to tensoring with the standard representation and to working within a fixed dominant chamber, thereby avoiding wall-crossing phenomena. 
In order to describe regions of stability for branching multiplicities, we also develop the notions of \emph{fences} and \emph{interleaving patterns}, which encode the combinatorial structure governing changes in multiplicity.

These notions will be used in the next section to state and prove the main theorem, in which regions of stability for branching multiplicities are described explicitly in terms of fences.

\subsection{General Background}
\label{subsec:known}
In this subsection, we temporarily assume that $G$ is of Harish-Chandra
class.
This assumption is made solely in order to recall, in a concise form,
the well-established theory of coherent continuation due to
Duflo, Jantzen, Schmid, Vogan, and Zuckerman.
No new results are proved under this assumption.

In the subsequent subsections, we abandon the Harish-Chandra class
hypothesis and consider general real forms of $O(n+1,\mathbb{C})$.
There we introduce the notion of reduced coherent continuation, which 
is designed to capture the parity conditions arising in branching laws.

Throughout the paper, all universal enveloping algebras are understood to
be taken over the field $\mathbb{C}$.
Accordingly, we write $U(\mathfrak g)$ for the universal enveloping algebra
of the complexified Lie algebra $\mathfrak g_\mathbb C$, and denote its
center by $\mathfrak{Z}(\mathfrak g)$.

Let $G$ be a linear reductive Lie group with Lie algebra $\mathfrak{g}$.
Under this assumption, $G$ is of \emph{Harish-Chandra class}, that is,
\[
   \operatorname{Ad}(G) \subset \operatorname{Int}(\mathfrak{g}_{\mathbb{C}}).
\]
Let $\mathfrak{j}_{\mathbb{C}}$ be a Cartan subalgebra of $\mathfrak g_\mathbb C$,
and let $W_{\mathfrak g}$ denote the Weyl group of the root system
$\Delta(\mathfrak{g}_{\mathbb{C}}, \mathfrak{j}_{\mathbb{C}})$.
Then the Harish-Chandra isomorphism gives
\[
   \operatorname{Hom}_{\mathbb{C}\text{-alg}}
   (\mathfrak{Z}(\mathfrak g), \mathbb{C})
   \;\simeq\;
   \mathfrak j_\mathbb{C}^*/W_{\mathfrak{g}}, \quad \chi_\xi \leftrightarrow \xi .
\]

Let $V \in \mathcal M(G)$ be an admissible representation, and $\xi\in \mathfrak j_\mathbb C^*$.
The $\chi_\xi$-primary component of $V$ is defined as the space of generalized
$\chi_\xi$-eigenvectors:
\begin{equation}
\label{eqn:pr_gen_inf}
   P_{\xi}(V)
   := \bigcup_{k=1}^{\infty}
      \bigcap_{z \in \mathfrak{Z}(\mathfrak g)}
      \operatorname{Ker} (z - \chi_\xi(z))^{k}.
\end{equation}
Then $V$ admits a primary decomposition
\[
   V = \bigoplus_{\xi} P_{\xi}(V).
\]
By a mild abuse of notation, we denote by
$P_{\xi} \colon V \to P_{\xi}(V)$
the corresponding projection.

Let $\mathcal{V}(G)$ be the Grothendieck group of $\mathcal{M}(G)$; that is,
the abelian group generated by elements $X \in \mathcal{M}(G)$, subject to
the relation
$
   X = Y + Z
$
whenever there exists a short exact sequence
$
   0 \to Y \to X \to Z \to 0.
$

\begin{definition}[Coherent continuation of representations, Duflo \cite{D79}]
\label{def:coherent}
Let $G$ be a real reductive group of Harish-Chandra class.
Fix a maximally split Cartan subgroup $J \subset G$, and let
$\Lambda_f \subset \widehat{J}$ denote the lattice of weights of finite-dimensional representations of $G$.

Fix a weight $\xi \in \mathfrak{j}_{\mathbb{C}}^{*}$, and define 
\[
   \xi + \Lambda_f := \{\xi + \lambda : \lambda \in \Lambda_f\},
\]
where $\xi + \lambda$ is understood as a formal sum, since the sum is only defined at the level of differentials, i.e., $\xi + d \lambda$.

A \emph{coherent family} of virtual $G$-modules 
based on $\xi + \Lambda_f$ is a map
\[
   \Phi \colon \xi + \Lambda_f \to \mathcal{V}(G)
\]
satisfying the following conditions:
\begin{itemize}
\item
For each $\lambda \in \Lambda_f$, the virtual representation
$\Phi(\xi+\lambda)$ has infinitesimal character
$\xi + d\lambda \in \mathfrak{j}_{\mathbb{C}}^{*}$.

\item
For any finite-dimensional representation $F$ of $G$, one has
\[
   \Phi(\xi+\lambda) \otimes F
   =
   \sum_{\mu \in \Delta(F)} \Phi(\xi+\lambda+\mu)
   \quad \text{in } \mathcal{V}(G).
\]
\end{itemize}
\end{definition}

We recall some basic facts
(cf.\ \cite[Thm.~7.2.7, Cor.~7.2.27, Prop.~7.2.22]{V81}).

\begin{fact}
\label{fact:coherent}
\begin{enumerate}
\item
(Schmid, Zuckerman)
Let $\Pi \in \mathcal{V}(G)$ have infinitesimal character $\xi$.
Then there exists a coherent family based on $\xi + \Lambda_f$ such that
$\Phi(\xi) = \Pi$.

\item
If $\xi$ is nonsingular, then the coherent family in {\rm(1)} is unique.

\item
(Zuckerman \cite{Z77})
If $\Pi$ is an irreducible $G$-module with nonsingular infinitesimal character,
then $\Phi(\xi+\lambda)$ is nonzero and irreducible for all $\lambda$ such that
$\xi + d\lambda$ lies in the same strictly dominant chamber $D(\xi)_{>}$;
cf.\ \eqref{eqn:D+} below.
\end{enumerate}
\end{fact}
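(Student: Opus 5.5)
The statement just above collects three classical facts, and I would prove them in the order (1), (3), (2). Part (3) supplies the translation-functor machinery that the uniqueness argument in part (2) reuses.

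\textbf{Part (1): existence.} I would reduce to standard modules. By the Langlands classification, $\Pi$ can be written in $\mathcal V(G)$ as a finite $\mathbb Z$-combination $\Pi=\sum_i a_i X_i$. Each $X_i=\operatorname{Ind}_{P}^{G}(\delta_i\otimes e^{\nu_i}\otimes 1)$ is a standard module with the same infinitesimal character $\xi$.

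For a single standard module, a coherent family can be written down explicitly by letting the continuous parameter of $\delta_i\otimes\nu_i$ vary. Concretely, one moves the Harish-Chandra parameter of the discrete series of $M$ together with $\nu_i$ by $\lambda$. The tensor identity in Definition~\ref{def:coherent} then follows from two facts:
\begin{itemize}
\item the formula $\operatorname{Ind}(\sigma)\otimes F\simeq\operatorname{Ind}(\sigma\otimes F|_{P})$;
\item the corresponding identity for discrete series of $M$, which holds by Harish-Chandra's character formula.
\end{itemize}
Alternatively, one can work at the level of global characters. On each Cartan subgroup the character of $X_i$ has the form $\sum_w c_w e^{w\xi}/D$, and one defines the family by replacing $e^{w\xi}$ with $e^{w(\xi+\lambda)}$. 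The tensor property is then the Weyl character formula for $F$. Finally I set $\Phi:=\sum_i a_i\Phi_i$.

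\textbf{Part (3): Zuckerman's irreducibility.} Let $\xi+d\lambda\in D(\xi)_{>}$, and let $F_\lambda$ be the finite-dimensional representation with extremal weight $\lambda$ in that chamber. Consider the translation functor
\[
\psi=P_{\xi+\lambda}(\,\cdot\otimes F_\lambda).
\]
Because $\xi$ is nonsingular, $\xi+\lambda$ is the only weight among $\{\xi+\mu:\mu\in\Delta(F_\lambda)\}$ that is $W_{\mathfrak g}$-conjugate to $\xi+\lambda$. Hence, by the coherence property, $\psi(\Phi(\xi))=\Phi(\xi+\lambda)$.

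I would show that $\psi$ is an equivalence between the categories of modules with generalized infinitesimal characters $\xi$ and $\xi+\lambda$. The quasi-inverse is $P_{\xi}(\,\cdot\otimes F_\lambda^*)$. The main obstacle is proving that the composite
\[
P_\xi(F_\lambda\otimes F_\lambda^*\otimes X)
\]
is naturally isomorphic to $X$. I would handle this with Kostant's argument: the trivial representation occurs exactly once in $F_\lambda\otimes F_\lambda^*$, and because both parameters lie in the same closed chamber, no other constituent contributes to the $\chi_\xi$-primary part. An exact equivalence sends irreducible objects to nonzero irreducible objects, which gives (3).

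\textbf{Part (2): uniqueness.} Suppose $\Phi$ and $\Phi'$ both satisfy (1). Then $\Psi=\Phi-\Phi'$ is a coherent family with $\Psi(\xi)=0$. By the projection identity from part (3), $\Psi(\xi+\lambda)=0$ for every $\lambda$ with $\xi+d\lambda\in D(\xi)_{>}$.

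For general $\lambda$, I would pass to characters. On each Cartan subgroup $H$, coherence forces the numerator of $\Theta_{\Psi(\xi+\lambda)}$ to have the form $\sum_w c_w(H)e^{w(\xi+\lambda)}$, with coefficients $c_w(H)$ independent of $\lambda$ on the relevant lattice. This is the standard observation that tensoring by $F$ multiplies the numerator by the character of $F$. Vanishing for the Zariski-dense set of $\lambda$ in the chamber, combined with the linear independence of distinct exponentials, forces every $c_w(H)=0$. Hence $\Psi\equiv0$.
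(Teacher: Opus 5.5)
The paper gives no proof of this Fact; it quotes it from Vogan's book. Your plan follows the classical route: standard modules for (1), translation functors for (3), characters for (2). However, the key step of your part (3) is false as written.

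\textbf{Part (3).} The composite of $\psi=P_{\xi+\lambda}(\,\cdot\otimes F_\lambda)$ and $\psi'=P_\xi(\,\cdot\otimes F_\lambda^*)$ is $P_\xi\bigl(P_{\xi+\lambda}(X\otimes F_\lambda)\otimes F_\lambda^*\bigr)$. It is not $P_\xi(F_\lambda\otimes F_\lambda^*\otimes X)$, and the latter is generally not isomorphic to $X$. Nontrivial constituents of $F_\lambda\otimes F_\lambda^*\simeq\operatorname{End}(F_\lambda)$ do contribute to the $\chi_\xi$-part; for instance the adjoint constituent does, via the action map $X\otimes\mathfrak g_{\mathbb C}\to X$. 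Concretely, take $\mathfrak{sl}(2,\mathbb C)$, $X=V_k$ with $k\ge 2$, and $F_\lambda=V_1$. Then $X\otimes V_1\otimes V_1\simeq V_{k+2}\oplus V_k\oplus V_k\oplus V_{k-2}$, whose $\chi_\xi$-part is $X\oplus X$.

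Your mechanism also uses only the closed chamber. It would therefore make translation onto a wall an equivalence, which is false, since finite-dimensional modules are killed there. The intermediate projection $P_{\xi+\lambda}$ is essential.

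What controls the correct composite is the weight lemma you invoke (uniqueness of $\mu$ with $\xi+\mu\in W_{\mathfrak g}(\xi+\lambda)$), applied in both directions. This is where you need $\xi+d\lambda$ itself nonsingular, i.e.\ the open chamber. Together with (1), it gives $[\psi'\psi Y]=[Y]$ and $[\psi\psi'Z]=[Z]$ in the Grothendieck groups. It follows that $\psi(\Pi)$ represents $\Phi(\xi+\lambda)$ and is nonzero. Now suppose $0\to Y_1\to\psi(\Pi)\to Y_2\to 0$ with $Y_1,Y_2\neq 0$. Exactness gives $[\psi'Y_1]+[\psi'Y_2]=[\Pi]$, so some $\psi'Y_j=0$. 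This contradicts $[\psi\psi'Y_j]=[Y_j]\neq 0$. No equivalence of categories is needed.

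\textbf{Part (2).} Your decisive claim is that coherence forces the numerators to be $\sum_w c_w e^{w(\xi+\lambda)}$ with $c_w$ independent of $\lambda$. The observation you cite does not prove this. Multiplying such a numerator by $\operatorname{ch}F$ shows that a family of this shape is coherent, which is the converse. Your direction requires comparing coefficients in $\sum_{w,\mu}c_w(\eta)e^{w(\eta+\mu)}=\sum_{w,\mu}c_w(\eta+\mu)e^{w(\eta+\mu)}$. There the exponents $w(\eta+\mu)$ can coincide for distinct pairs $(w,\mu)$ as soon as $\xi$ has integral roots. Moreover, at singular points of $\xi+\Lambda_f$ the $c_w(\eta)$ are not even well defined.

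One way to close this, keeping your chamber-vanishing step, is to fix a regular $h$ and consider $f(\gamma)=N(\gamma)(h)$ on $\xi+\Lambda_f$. It satisfies $\sum_\mu m_F(\mu)f(\gamma+\mu)=\operatorname{ch}F(h)\,f(\gamma)$ for all $F$. A finiteness argument for $\mathbb C[\Lambda_f]$ over its $W$-invariants shows that such $f$ lie in the span of the $|W|$ distinct characters $\gamma\mapsto e^{w\gamma}(h)$. A combination of distinct characters vanishing on the translated cone $\Lambda(\xi)$ is zero. So your density remark is the right tool, but it must be applied in the lattice variable, after such a statement is established. With constant $c_w$, vanishing at $\lambda=0$ alone would already suffice.

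\textbf{Part (1).} Two smaller points. The global-character alternative is not independent of the standard-module route, since you must still show that the new eigendistributions are characters of virtual modules. Also, the coherence of discrete series characters of $M$ needs more than Harish-Chandra's formula on the compact Cartan subgroup.
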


In Section~\ref{subsec:red_coh}, we introduce the notion of reduced coherent
continuation using only the standard representation of
$G \subset O(n+1,\mathbb{C})$, which need not be of Harish-Chandra class.
\medskip
\subsection{Some Conventions for $G \subset O(n+1, \mathbb C)$}
From now on, we assume that $G$ is a real form of
$O(n+1, \mathbb C)$, or a finite covering thereof, and set
\[
   r := \left\lfloor \frac{n+1}{2} \right\rfloor,
\]
the rank of $G$.
Typical examples include $G = O(p,q)$ or $SO(p,q)$ with $p+q = n+1$,
as well as $G = O^{\ast}(2m)$ or $SO^{\ast}(2m)$ with
$2m = n+1$.

Let $\mathfrak{g}_{\mathbb{C}}=\mathfrak{o}(n+1, \mathbb C)$ denote the
complexified Lie algebra.
Fix a Cartan subalgebra with standard dual basis
$e_1,\dots,e_r$.
We define
\[
\Delta_G:=
\{\pm e_i \pm e_j: 1\le i<j\le r\} \cup \{\pm e_i: 1\le i\le r\},
\]
which coincides with the root system of $\mathfrak{g}_{\mathbb{C}}$ when
$n$ is even.
When $n$ is odd, the set $\Delta_G$ is obtained from the root system of
$\mathfrak{g}_{\mathbb{C}}$ by adjoining the additional set
\[
\mathcal E := \{\pm e_i : 1\le i\le r\}.
\]
We adopt this convention throughout the paper, in particular for the notions of
nonsingularity and the dominant region $D(\xi)_{>}$.

Let $W_G$ denote the corresponding Weyl group:
\[
W_G :=
\mathfrak{S}_r \ltimes (\mathbb{Z}/2\mathbb{Z})^r.
\]
Let $\mathfrak{Z}(G)$ denote the subalgebra of the universal enveloping algebra
$U(\mathfrak{g})$ of $\mathfrak g_\mathbb C$ consisting of the
$O(n+1,\mathbb{C})$-invariant elements.
When $n$ is even, $\mathfrak{Z}(G)$ is a proper subalgebra of index two in
$\mathfrak{Z}(\mathfrak{g})$.
The projection onto the primary component introduced in the previous section
continues to be defined even when $G$ is not of Harish-Chandra class,
by replacing $\mathfrak{Z}(\mathfrak{g})$ with $\mathfrak{Z}(G)$.

\begin{remark}
\label{rem:ZG}
Our main results are formulated in terms of
$\mathfrak{Z}(G)$ rather than $\mathfrak{Z}(\mathfrak g)$.
This choice allows us to formulate the translation
principle for branching laws in a uniform manner, independently of the
real form, including cases where $G$ is not of Harish-Chandra class.
\end{remark}

The Harish-Chandra isomorphism yields a canonical bijection
\begin{equation}
\label{eqn:HCisom}
   \operatorname{Hom}_{\mathbb{C}\text{-alg}}
   \bigl(\mathfrak{Z}(G), \mathbb{C}\bigr)
   \;\simeq\;
   \mathbb{C}^{r}/W_G,
\end{equation}
normalized so that the infinitesimal character of the trivial one-dimensional
$\mathfrak{g}_{\mathbb{C}}$-module is represented by
\[
   \rho_{\mathfrak{o}(n+1)}
   =
   \Bigl(
      \tfrac{n-1}{2},
      \tfrac{n-3}{2},
      \dots,
      \tfrac{n+1}{2}-r
   \Bigr)
   \;\bmod\; W_G.
\]
For $\lambda = (\lambda_1, \dots, \lambda_r)$, we define the stabilizer subgroup
\[
   W_{\lambda}
   :=
   \{\, w \in W_G : w \lambda = \lambda \,\}
   \subset W_G.
\]

\begin{definition}
\label{def:lmd_reg}
A weight $\lambda$ is called \emph{nonsingular} if $W_{\lambda} = \{e\}$.
Equivalently, $\lambda$ is nonsingular if and only if
\begin{equation}
\label{eqn:lmd_reg}
   \lambda_a \neq 0
   \quad \text{for } 1 \le a \le r,
   \qquad
   \lambda_a \neq \pm \lambda_b
   \quad \text{for } a \neq b.
\end{equation}
\end{definition}

We define two norms on the space of weights by
\begin{equation}
\label{eqn:lmd_norm}
   |\lambda|
   :=
   \sum_{i=1}^{r} |\lambda_i|,
   \qquad
   \|\lambda\|
   :=
   \left(\sum_{i=1}^{r} \lambda_i^2\right)^{1/2}.
\end{equation}

\medskip
\subsection{Reduced Coherent Family of Representations}
\label{subsec:red_coh}

In this subsection, we introduce the notion of a
\emph{reduced coherent family of representations}.
Compared with the classical notion of a coherent family
(Definition~\ref{def:coherent}),
the reduced version is characterized by the following three features.

First, the group $G$ is not required to be of Harish-Chandra class.
Second, since no wall-crossing phenomena are involved, the parameter
space is analyzed in a refined manner within a single dominant chamber.
Third, the finite-dimensional representation $F$ used in the definition
is restricted to the natural (standard) representation.

The second condition suffices to formulate the translation principle for branching rules developed later in this paper, while the third condition captures the parity conditions appearing in branching rules.

Let $\xi$ be a nonsingular weight. 
The set of roots integral with respect to $\xi$ is defined by
\[
   R(\xi)
   :=
   \bigl\{
      \alpha \in \Delta_G :
      \langle \alpha^{\vee}, \xi \rangle \in \mathbb{Z}
   \bigr\}.
\]
We say that $\xi$ is \emph{integral} if
$
   R(\xi)
   =
   \Delta_G.
$

Associated with $\xi$, we introduce the following choice of positive roots and the corresponding open dominant chamber:
\begin{align}
\notag
   R^{+}(\xi)
   :=\,&
   \bigl\{
      \alpha \in \Delta_G
:      \langle \alpha^{\vee}, \xi \rangle \in \mathbb{N}_{+}
   \bigr\},
\\
\label{eqn:D+}
   D(\xi)_{>}
   :=\,&
   \bigl\{
      \eta \in \mathfrak{j}_{\mathbb{C}}^{*}
      :
      \langle \eta, \alpha \rangle > 0
      \text{ for all }
      \alpha \in R^{+}(\xi)
   \bigr\}.
\end{align}
\begin{remark}[Guiding convention on parameters]
Throughout this paper, we distinguish two roles played by infinitesimal characters.
A fixed nonsingular weight $\xi$ serves as a \emph{base point}, which determines a dominant chamber
and the associated reduced coherent family.
The symbol $\lambda$ denotes a \emph{varying parameter} in the affine lattice
$\xi+\mathbb{Z}^r$, indexing individual members of this family.
All translation phenomena and stability results are formulated by varying $\lambda$
while keeping the base point $\xi$ fixed.
\end{remark}

\begin{definition}[Reduced coherent family]
\label{def:26032915}
Let $\xi$ be nonsingular, and set
\begin{equation}
\label{eqn:260331_FJ}
   \Lambda(\xi)
   :=
   (\xi+\mathbb{Z}^r)\cap D(\xi)_{>}.
\end{equation}
A map
\[
   \Pi:\Lambda(\xi)\longrightarrow \mathcal{M}(G)
\]
is called a \emph{reduced coherent family} if the following conditions are satisfied:
\begin{itemize}
\item
for every $\lambda\in\Lambda(\xi)$, the object $\Pi_\lambda$ has infinitesimal
character $\lambda$;
\item
for $\lambda,\lambda+\varepsilon e_i\in\Lambda(\xi)$ with
$1\le i\le r$ and $\varepsilon\in\{+1,-1\}$, one has
\[
   P_{\lambda+\varepsilon e_i}\bigl(\Pi_\lambda\otimes F\bigr)
   \simeq
   \Pi_{\lambda+\varepsilon e_i}.
\]
\end{itemize}
\end{definition}

In the classical notion of coherent families, one considers tensor products with
arbitrary finite-dimensional representations of $G$, which leads to a fine
weight lattice. In the present setting, by contrast, we restrict attention to
tensoring with the standard representations $F$. As a result, the
relevant parameter lattice $\Lambda(\xi)$ is coarser than the full weight lattice.
This coarsening naturally incorporates the parity conditions that arise in the
formulation of stability for branching multiplicities. Moreover, this reduced
framework is flexible enough to apply to groups $G$ that lie outside the
Harish-Chandra class.
\medskip

Associated with the standard representation, we define the following
\emph{translation functors} on the category $\mathcal{M}(G)$:
\begin{equation}
\label{eqn:tr_functor}
   \psi_{\tau}^{\tau+\varepsilon e_i}(\,\cdot\,)
   :=
   P_{\tau+\varepsilon e_i}
   \bigl(P_{\tau}(\,\cdot\,)\otimes \mathbb{C}^{n+1}\bigr),
\end{equation}
where $\varepsilon\in\{+1,-1\}$ and $1\le i\le r$.

With this notation, the second condition in
Definition~\ref{def:26032915} can be equivalently expressed as
\begin{equation}
\label{eqn:rcf_transl}
   \psi_{\lambda}^{\lambda'}
   \bigl(\Pi_{\lambda}\bigr)
   \simeq
   \Pi_{\lambda'},
\end{equation}
for any $\lambda,\lambda'\in\Lambda(\xi)$ satisfying
$|\lambda-\lambda'|=1$.

When $G$ is of Harish-Chandra class, a reduced coherent family may be obtained by restricting the parameter space of an ordinary coherent
family.
\begin{proposition}[Case: $G$ of Harish-Chandra class]
\label{prop:260331}
Let $G \subset O(n+1,\mathbb{C})$ be a real form, or a finite covering
thereof, and suppose that $G$ is of Harish-Chandra class.

Let $J \subset G$ be a maximally split Cartan subgroup, and let $\Lambda$
be the lattice generated by the nonzero weights of the standard
representation.
Then there is a canonical identification
\begin{equation}
\label{eqn:wt_F}
   \mathbb{Z}^r \simeq \Lambda \subset \Lambda_f .
\end{equation}

Let $\Pi \in \operatorname{Irr}(G)$ be an irreducible representation with
nonsingular infinitesimal character $\xi$, and let
\[
   \Phi \colon \xi + \Lambda_f \longrightarrow \mathcal{V}(G)
\]
be the unique coherent family passing through $\Pi$ at $\xi$.
Then the restriction of $\Phi$ to
\[
   \Lambda(\xi)
   :=
   (\xi + \mathbb{Z}^r) \cap D(\xi)_{>}
\]
defines a reduced coherent family passing through $\Pi$.
\end{proposition}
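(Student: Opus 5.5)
We must show three things: the identification \eqref{eqn:wt_F}; that the restriction $\lambda\mapsto\Phi(\lambda)$ to $\Lambda(\xi)$ takes values in $\mathcal M(G)$ (genuine modules, not just virtual ones); and that it satisfies the translation condition of Definition~\ref{def:26032915}.

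\emph{The lattice.} The standard representation $\mathbb C^{n+1}$ restricted to the complexified Cartan has weights $\pm e_1,\dots,\pm e_r$, together with the weight $0$ when $n+1$ is odd. So the nonzero weights generate $\Lambda=\bigoplus_i\mathbb Z e_i\simeq\mathbb Z^r$. Since $\mathbb C^{n+1}$ is a finite-dimensional representation of $G$ (pulled back to the covering if necessary), its weights lie in $\Lambda_f$, and hence $\Lambda\subset\Lambda_f$. Thus $\xi+\mathbb Z^r\subset\xi+\Lambda_f$, and the restriction of $\Phi$ makes sense. For $\lambda\in\Lambda(\xi)$, the element $\lambda$ lies in the same strictly dominant chamber $D(\xi)_>$, which is defined with respect to $R^+(\xi)$. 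By Fact~\ref{fact:coherent}(3), $\Phi(\lambda)$ is therefore a nonzero irreducible module, in particular an object of $\mathcal M(G)$, with infinitesimal character $\lambda$. If $n$ is odd, $\Delta_G$ contains the extra roots $\pm e_i$. I would check that the chamber $D(\xi)_>$ defined with $\Delta_G$ is contained in the usual integral chamber, so Fact~\ref{fact:coherent}(3) still applies.

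\emph{The translation identity.} Fix $\lambda,\lambda':=\lambda+\varepsilon e_i\in\Lambda(\xi)$ and set $F=\mathbb C^{n+1}$. The coherence property gives
\[
\Phi(\lambda)\otimes F=\sum_{\mu\in\Delta(F)}\Phi(\lambda+\mu)\quad\text{in }\mathcal V(G).
\]
The functor $P_{\lambda'}$ is exact, so it passes to the Grothendieck group. Since $\Phi(\lambda+\mu)$ has infinitesimal character $\lambda+\mu$, this yields
\[
[P_{\lambda'}(\Phi(\lambda)\otimes F)]=\sum_{\mu\in\Delta(F),\ \lambda+\mu\in W_G\lambda'}\Phi(\lambda+\mu).
\]
Here "same infinitesimal character" means congruence modulo $W_G$, because primary projection is taken with respect to $\mathfrak Z(G)$ by \eqref{eqn:HCisom}. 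The key combinatorial step is to show that $\mu=\varepsilon e_i$ is the only weight of $F$ (counted with multiplicity) satisfying $\lambda+\mu\in W_G(\lambda+\varepsilon e_i)$. Suppose $\lambda+\mu=w(\lambda+\varepsilon e_i)$ with $\mu\in\{\pm e_j,0\}$. Both $\lambda$ and $\lambda'$ are nonsingular, lie in $D(\xi)_>$, and differ by a unit vector. Comparing the multisets $\{|\cdot_k|\}$ of the coordinates of the two sides, the dominance inequalities $\lambda_a\pm\lambda_b\notin\{0,\pm1,\pm2\}$ in the relevant direction, together with integrality relative to $R(\xi)$, should force $w=e$ and $\mu=\varepsilon e_i$. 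I expect this to be the main obstacle: one must handle the nonintegral coordinates, where separate cosets never interact, and the zero weight when $n+1$ is odd, where one needs $\lambda\notin W_G\lambda'$, which holds since $\|\lambda\|\ne\|\lambda'\|$ unless $\lambda_i=-\varepsilon/2$. That last case is excluded because $e_i\in\Delta_G$ makes $\lambda_i$ and $\lambda_i+\varepsilon$ lie on the same side of $0$ in $D(\xi)_>$; this is where the convention of adjoining $\mathcal E$ is used.

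Given this uniqueness, $P_{\lambda'}(\Phi(\lambda)\otimes F)$ equals $\Phi(\lambda')$ in $\mathcal V(G)$. The left side is an actual module and the right side is irreducible and nonzero, so the equality of classes in the Grothendieck group forces an isomorphism $P_{\lambda'}(\Phi(\lambda)\otimes F)\simeq\Phi(\lambda')$. This is \eqref{eqn:rcf_transl}, and it proves that the restriction is a reduced coherent family through $\Pi=\Phi(\xi)$.
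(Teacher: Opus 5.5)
\textbf{Comparison with the paper's proof.} Your first two steps make up the whole of the paper's proof. The paper identifies $\Lambda\simeq\mathbb{Z}^r$ by observing that the $J$-weights of $\mathbb{C}^{n+1}$ are determined by their differentials, the nonzero ones forming $\mathcal{E}$. It then invokes Fact~\ref{fact:coherent}(3) to conclude that $\Phi$ takes values in $\mathcal{M}(G)$ on $D(\xi)_{>}$. You should add that remark about differentials: $\Lambda$ lives in $\widehat J$, and the identification with $\mathbb{Z}^r$ is only canonical because distinct weights of $F$ have distinct differentials.

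The paper never checks the translation axiom of Definition~\ref{def:26032915}; it leaves it implicit in coherence. Your third step supplies this check, and the route is sound. You apply the exact functor $P_{\lambda'}$ to the coherence identity in $\mathcal{V}(G)$; since $P_{\lambda'}$ is defined via $\mathfrak{Z}(G)$, it detects $W_G$-orbits. You then show only $\Phi(\lambda')$ survives, and use that a genuine module whose class equals that of a nonzero irreducible module is isomorphic to it. So your argument is more complete than the paper's, at the cost of one combinatorial check.

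\textbf{The uniqueness check.} You flag this as the main obstacle but leave it at ``should force''. It is in fact immediate, and needs neither integrality considerations nor the inequalities $\lambda_a\pm\lambda_b\notin\{0,\pm1,\pm2\}$. Those inequalities are false in general (already for $\lambda=\rho$) and should be dropped.

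Since $W_G$ is the full group of signed permutations, $\lambda+\mu\in W_G\lambda'$ holds iff the multisets $\{(\lambda+\mu)_k^2\}_k$ and $\{(\lambda'_k)^2\}_k$ agree. Use squares rather than absolute values, since the coordinates are complex. Cancelling the common entries $\lambda_k^2$ for $k\notin\{i,j\}$ leaves three cases:
\begin{itemize}
\item for $\mu=0$, one gets $\lambda_i=-\varepsilon/2$;
\item for $\mu=-\varepsilon e_i$, one gets $\lambda_i=0$;
\item for $\mu=\delta e_j$ with $j\neq i$, either $\lambda_i^2=\lambda_j^2$, or $\lambda_i=-\varepsilon/2$ and $\lambda_j=-\delta/2$, which again gives $\lambda_i^2=\lambda_j^2$.
\end{itemize}
The second and third cases contradict the nonsingularity of $\lambda$. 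The first contradicts $\lambda,\lambda'\in D(\xi)_{>}$: then $2\xi_i$ is a nonzero integer, so $\pm e_i\in R^+(\xi)$, and $\lambda_i$ and $\lambda_i+\varepsilon$ must have the same sign.

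Your remark that the zero-weight case is where the adjoined set $\mathcal{E}$ is used is misplaced. The zero weight occurs only for $n$ even, where $\pm e_i$ are already roots of $\mathfrak{o}(2r+1,\mathbb{C})$. For $n$ odd, $\mathcal{E}\subset\Delta_G$ is what guarantees $\lambda_i\neq 0$, which rules out the case $\mu=-\varepsilon e_i$. Without it, both $\Phi(\lambda\pm\varepsilon e_i)$ would survive the projection.
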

\begin{proof}
Viewing both lattices as subsets of $\widehat{J}$, we may regard $\Lambda$
as a subgroup of $\Lambda_f$.
Moreover, the weights of $J$ occurring in the standard representation
$\mathbb{C}^{n+1}$ are uniquely determined by their differentials.
With respect to the standard basis $\{e_i\}_{1\le i\le r}$ of
$\mathfrak{j}_{\mathbb{C}}^{*}$, the set of nonzero weights of the standard
representation is given by
\[
   \mathcal{E}
   =
   \{\pm e_i : 1 \le i \le r\}.
\]
It follows that $\Lambda$ may be canonically identified with $\mathbb{Z}^r$.

By Fact~\ref{fact:coherent}(3), the restriction of the coherent family
$\Phi$ to $D(\xi)_{>}$ takes values in $\mathcal{M}(G)$ rather than in
$\mathcal{V}(G)$.
This completes the proof.
\end{proof}

The purpose of the following elementary example is to illustrate a phenomenon
specific to reduced coherent families, which already appears in the
finite-dimensional setting.
When $n$ is even, the set of finite-dimensional representations decomposes into
two distinct reduced coherent families, whereas in the odd-dimensional case,
the generic part (corresponding to $\lambda_r \neq 0$) forms a single reduced
coherent family.

This distinction reflects the fact that reduced coherent families incorporate,
in an intrinsic way, the parity conditions that arise in branching laws.
As will be made precise in the stability theorem below, this feature provides
the appropriate framework for describing translation-invariant multiplicity
phenomena.
In particular, in the Harish-Chandra class and for even $n$, it explains the
decomposition of $\widehat{G}$ into two components at the level of reduced
coherent families.

\begin{example}[Finite-dimensional case]
\label{ex:26032917}

\noindent
{\rm(1)}\enspace
Let $G = O(2r+1)$.
In this case, $G$ is of Harish-Chandra class, and
Proposition~\ref{prop:260331} applies.
Any irreducible representation $\Pi$ of $G$ remains irreducible upon
restriction to the subgroup $SO(2r+1)$, and the element $-I_{2r+1}$ acts by
a scalar, equal to either $1$ or $-1$.

We denote such a representation $\Pi$ by $F^{G}(\mu)_{\varepsilon}$ if the
highest weight of $\Pi|_{SO(2r+1)}$ is $\mu=(\mu_1,\dots,\mu_r)$, and
$-I_{2r+1}$ acts by the scalar
\[
   \varepsilon\,(-1)^{\sum_{i=1}^{r}\mu_i}
   \in \{1,-1\}.
\]

Let 
$
   \rho
   :=
   \left(r-\tfrac{1}{2}, \dots, \tfrac{3}{2}, \tfrac{1}{2}\right),
$
and define
\[
   \Lambda
   :=
   \left\{
      \lambda \in \left(\mathbb{Z} + \tfrac{1}{2}\right)^{r}
      : \lambda_1 > \lambda_2 > \cdots > \lambda_r > 0
   \right\}.
\]
Then, for each $\varepsilon \in \{1,-1\}$, the map
\[
   \Phi_{\varepsilon}
   \colon
   \Lambda \to \mathcal{M}(G),
   \qquad
   \lambda \mapsto F^{G}(\lambda - \rho)_{\varepsilon}
\]
defines a reduced coherent family, and one has
\[
   \Phi_{1}(\Lambda) \cup \Phi_{-1}(\Lambda) = \widehat{G}.
\]

\medskip
\noindent
{\rm(2)}\enspace
Let $G = O(2r)$.
For each $\mu = (\mu_1, \dots, \mu_r) \in \mathbb{Z}^{r}$ with
$\mu_1 \ge \cdots \ge \mu_r \ge 1$, there exists a unique irreducible
finite-dimensional representation $\Pi$ of $G$ such that the restriction
to $SO(2r)$ decomposes as
\[
   \Pi|_{SO(2r)}
   =
   F^{SO(2r)}(\mu) \oplus F^{SO(2r)}(\mu'),
\]
where
\[
   \mu' := (\mu_1, \dots, \mu_{r-1}, -\mu_r).
\]

We denote this representation $\Pi$ by $F^{G}(\mu)$.

Let
$
   \rho := (r-1, \dots, 1, 0),
$
and define
\[
   \Lambda
   :=
   \left\{
      \lambda \in \mathbb{Z}^{r}
      : \lambda_1 > \lambda_2 > \cdots > \lambda_r > 1
   \right\}.
\]
Then the map
\[
   \Phi \colon \Lambda \to \mathcal{M}(G),
   \qquad
   \lambda \mapsto F^{G}(\lambda - \rho)
\]
defines a reduced coherent family.
In this case, $\Phi(\Lambda)$ does not include representations with $\lambda_r = 0$,
equivalently $\mu_r=0$, and thus forms a proper subset of $\widehat{G}$.
\end{example}
\begin{remark} 
The nonvanishing of the multiplicity when $\mu_r = 0$ can be derived from the case where $\mu_r > 0$ by Proposition~\ref{prop:250623}.
\end{remark}
\subsection{Multi-Signatures, Fences and Interleaving Patterns}
\label{subsec:fence}

In this subsection, we introduce two key notions that will play a central role
in the statement of the main theorem in the next section: the
\emph{multi-signature}, which encodes the relative position of infinitesimal
characters for two groups, and the associated \emph{fences}, which describe the
loci where multiplicities may change.

Let $r$ and $s$ be positive integers, and let
$\lambda \in \mathbb{C}^{r}$ and $\nu \in \mathbb{C}^{s}$.
We denote by $\mathcal{P}(\lambda,\nu)$ the set of triples $(i,j,\delta)$
with $1 \le i \le r$, $1 \le j \le s$, and $\delta \in \{+,-\}$ such that
\[
   \lambda_i + \delta \nu_j \in \mathbb{Z} + \tfrac{1}{2}.
\]
Note that
\[
   \mathcal{P}(\lambda,\nu)
   =
   \mathcal{P}(\xi,\nu)
   \qquad
   \text{whenever } \lambda - \xi \in \mathbb{Z}^{r}.
\]

We define the associated multi-signature by
\begin{equation}
\label{eqn:lmd_nu_sgn}
   \operatorname{sgn}(\lambda,\nu)
   :=
   \bigl(
      \operatorname{sgn}
      (\lambda_i + \delta \nu_j)
   \bigr)_{(i,j,\delta) \in \mathcal{P}(\lambda,\nu)}.
\end{equation}

\begin{definition}[Fences and interleaving patterns]
\label{def:26040107}
Let $\xi \in \mathbb{C}^{r}$ be a nonsingular weight, and let
$\nu \in \mathbb{C}^{s}$.
A \emph{fence} is a hyperplane in the $(\lambda,\nu)$-space defined by
\[
   \xi_i + \delta \nu_j = \tfrac{1}{2}
   \quad\text{or}\quad
   -\tfrac{1}{2},
\]
for some $(i,j,\delta) \in \mathcal{P}(\xi,\nu)$.
We say that $(\xi,\nu)$ is \emph{away from fences} if
\[
   \xi_i + \delta \nu_j \notin \{\tfrac{1}{2}, -\tfrac{1}{2}\}
   \quad
   \text{for every } (i,j,\delta) \in \mathcal{P}(\xi,\nu).
\]
If the pair $(\xi,\nu)$ is away from all fences, we define the
\emph{interleaving region} by
\begin{equation}
\label{eqn:26040107}
   \mathcal{D}(\xi,\nu)
   :=
   \bigl\{
      \lambda \in \Lambda(\xi)
      :
      \operatorname{sgn}(\lambda,\nu)
      =
      \operatorname{sgn}(\xi,\nu)
   \bigr\}.
\end{equation}
\end{definition}

\medskip

Note that the region $\mathcal{D}(\xi,\nu) \subset \Lambda(\xi)$ can be described explicitly by
a system of inequalities expressing the relative ordering of the quantities
$\lambda_i$ and $\pm \nu_j$.
Such a description corresponds to an \emph{interlacing}, or more generally an
\emph{interleaving}, pattern between the coordinates of $\lambda$ and those of
$\nu$.

Under the assumptions of the definition, for each
$(i,j,\delta) \in \mathcal{P}(\xi,\nu)$ the quantity
$\xi_i + \delta \nu_j$ is a half-integer that is distinct from
$\pm\tfrac{1}{2}$.
Consequently, its sign is stable under shifts by $\pm\tfrac{1}{2}$, and one has
\[
   \operatorname{sgn}(\xi_i + \delta \nu_j)
   =
   \operatorname{sgn}(\xi_i + \delta \nu_j \pm \tfrac{1}{2}).
\]

\section{Main Results: $G \downarrow G'$}
\label{sec:main}

This section contains the main results of the paper.
We develop a new conceptual approach to the study of branching multiplicities, with particular emphasis on stability phenomena under translation of infinitesimal characters.

Our primary object of study is a pair of real reductive Lie groups
\[
   G \supset G',
\]
where $(G,G')$ 
is a real form of $(O(n+1, \mathbb C), O(n, \mathbb C))$, or a finite covering thereof.
Typical examples include the symmetric pairs
$(O(p,q), O(p-1,q))$, as well as the rank-one cases
$(SL(2,\mathbb{C}), SL(2,\mathbb{R}))$ and
$(SL(2,\mathbb{R}) \times SL(2,\mathbb{R}), SL(2,\mathbb{R}))$.

We establish a translation principle for the branching law
\[
   G \downarrow G'
\]
within a unified framework.
More precisely, working within the framework of reduced coherent
families, we show that branching multiplicities are locally constant on
convex regions of infinitesimal characters
determined by interleaving conditions.
This result is formulated in Theorem~\ref{thm:250515} purely in terms of
infinitesimal characters, and applies simultaneously to finite- and
infinite-dimensional representations, independently of the choice of real form of the pair $(G,G')$.

The stability phenomenon underlying this translation principle is a
consequence of Theorems~\ref{thm:25061911a} and~\ref{thm:25061911b}, which
constitute the main technical results of this paper.
These theorems provide explicit conditions on pairs of
$\mathfrak{Z}(G)$- and $\mathfrak{Z}(G')$-infinitesimal characters that
guarantee the nonvanishing of symmetry breaking operators after translation.

\subsection{Stability Theorems of Multiplicities}
\label{subsec:stability}

The following theorem asserts that branching multiplicities are locally
constant on regions determined by interleaving conditions.

\begin{theorem}[Stability of multiplicities]
\label{thm:250515}
Let $G \supset G'$ be a real form of
$(O(n+1,\mathbb{C}), O(n,\mathbb{C}))$, or of a finite covering thereof.

Let $\xi$ be a nonsingular weight, and let
$\pi \in \mathcal{M}(G')$ have
$\mathfrak{Z}(G')$-infinitesimal character $\nu$
 such that $(\xi,\nu)$ is away from fences.
Let $\{\Pi_{\lambda}\}_{\lambda \in \Lambda(\xi)} \subset \mathcal M(G)$ be a reduced coherent family
(Definition~\ref{def:26032915}).
Then the branching multiplicity
$
   [\Pi_{\lambda}|_{G'} : \pi]
$ is constant
for all $\lambda \in \mathcal{D}(\xi,\nu)$;
see Definition~\ref{def:26040107}.
\end{theorem}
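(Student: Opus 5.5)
The plan is to reduce the statement to a single-step comparison and then use connectivity of the interleaving region.

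\textbf{Reduction to neighbours.} Fix $\pi$ with $\mathfrak{Z}(G')$-infinitesimal character $\nu$. The region $\mathcal{D}(\xi,\nu)$ is the intersection of the lattice $\xi+\mathbb{Z}^r$ with an open convex polyhedral set. This set is cut out by the strict inequalities defining $D(\xi)_{>}$ together with the sign conditions $\operatorname{sgn}(\lambda_i+\delta\nu_j)=\operatorname{sgn}(\xi_i+\delta\nu_j)$ for $(i,j,\delta)\in\mathcal{P}(\xi,\nu)$. All of these constraints compare a single coordinate $\lambda_i$ with a constant, or two coordinates $\pm\lambda_a,\pm\lambda_b$ with each other. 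I would check directly that any two points of $\mathcal{D}(\xi,\nu)$ can be joined by a chain $\lambda^{(0)},\dots,\lambda^{(N)}$ inside $\mathcal{D}(\xi,\nu)$ with $|\lambda^{(k)}-\lambda^{(k+1)}|=1$. To do this, order the coordinates in decreasing order of $\xi_i$ and move them one unit at a time, starting with the coordinate whose motion does not collide with the others. Half-integrality of $\lambda_i\pm\nu_j$ on $\mathcal{P}$, and integrality of $\lambda_a\pm\lambda_b$ where it matters, guarantee that unit steps never jump over a constraint.

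\textbf{Single step.} It then suffices to prove the following for adjacent $\lambda,\lambda'=\lambda+\varepsilon e_i$ in $\mathcal{D}(\xi,\nu)$:
\[
[\Pi_{\lambda}|_{G'}:\pi]=[\Pi_{\lambda'}|_{G'}:\pi].
\]
By symmetry of the adjacency relation, one inequality suffices. Using \eqref{eqn:rcf_transl}, we have $\Pi_{\lambda'}\simeq P_{\lambda'}(\Pi_\lambda\otimes\mathbb{C}^{n+1})$. The key tool is the restriction $\mathbb{C}^{n+1}|_{G'}\simeq\mathbb{C}^n\oplus\mathbb{C}$. Given $T\in\operatorname{Hom}_{G'}(\Pi_\lambda,\pi)$, tensoring with the inclusion $\mathbb{C}\hookrightarrow\mathbb{C}^{n+1}$, or the projection onto it, produces maps
\[
\operatorname{Hom}_{G'}(\Pi_\lambda|_{G'},\pi)\to\operatorname{Hom}_{G'}\bigl((\Pi_\lambda\otimes\mathbb{C}^{n+1})|_{G'},\pi\bigr)\to \operatorname{Hom}_{G'}(\Pi_{\lambda'}|_{G'},\pi).
\]
The last arrow composes with the inclusion of the primary summand $P_{\lambda'}$. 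There is also a map going back the other way, coming from $\Pi_\lambda\simeq P_\lambda(\Pi_{\lambda'}\otimes\mathbb{C}^{n+1})$. To show these maps are injective, I would compose them and compute the composite $\Pi_\lambda\to\Pi_{\lambda'}\otimes\mathbb{C}^{n+1}\to\Pi_\lambda$, then paired with $T$. The composite equals $T$ composed with the action of an element of $U(\mathfrak g)^{G'}$, namely a quadratic expression in $\mathfrak{g}\ominus\mathfrak{g}'$ inserted between primary projections. By a Harish-Chandra-type computation for the pair $\mathfrak{Z}(G)$, $\mathfrak{Z}(G')$, this expression acts, modulo the kernel of $T$, by a scalar $c(\lambda,\nu,i,\varepsilon)$. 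This is the content I expect Theorems~\ref{thm:25061911a} and~\ref{thm:25061911b} to supply. I expect $c$ to be essentially a product of the form
\[
\prod_{j,\delta}\bigl(\lambda_i+\tfrac{\varepsilon}{2}+\delta\nu_j\bigr),
\]
up to nonzero factors coming from the $\lambda_a\pm\lambda_i$ terms.

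\textbf{Main obstacle.} The hard step is establishing the scalar identity and showing that $c\neq0$ exactly when the step does not cross a fence. Within $\mathcal{D}(\xi,\nu)$, the factor $\lambda_i+\varepsilon/2+\delta\nu_j$ can vanish only if $(i,j,\delta)\in\mathcal{P}$ and the sign of $\lambda_i+\delta\nu_j$ changes between $\lambda$ and $\lambda'$. Sign preservation on $\mathcal{D}$ forbids this change. Here the ``away from fences'' hypothesis ensures the value $\pm\tfrac12$ is never attained at the base point, so the sign is well defined and stable. The remaining factors are nonzero by nonsingularity and dominance, since $\lambda,\lambda'\in D(\xi)_{>}$. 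I would first attempt the scalar computation via the Capelli/Casimir element $\sum_k X_{0k}^2$ for $\mathfrak{o}(n+1)\supset\mathfrak{o}(n)$. I would evaluate it on $\pi$-isotypic quotients by comparing the generalized eigenvalues of the $\mathfrak{Z}(G)$- and $\mathfrak{Z}(G')$-Casimirs on $\Pi_\lambda\otimes\mathbb{C}^{n+1}$. Here I would use the fact that these primary components separate because of nonsingularity, and use continuity to handle generalized rather than honest eigenvectors.
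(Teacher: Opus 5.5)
Your proposal is correct and follows the paper's proof. It uses connectivity of $\mathcal{D}(\xi,\nu)$ by unit steps (Lemma~\ref{lem:26040112}). For each step it uses the injection $T\mapsto (T\otimes\operatorname{pr}_{F\to F'})\circ P_{\lambda'}$ (Proposition~\ref{prop:250623}), justified by the universal scalar identity with $C_{i,\varepsilon}=g_{i,\varepsilon}/\varphi_{i,\varepsilon}$, whose factors $\lambda_i+\tfrac{\varepsilon}{2}\pm\nu_j$ cannot vanish along a step inside $\mathcal{D}(\xi,\nu)$ by exactly your sign argument, and which is applied again with $(\lambda',-\varepsilon)$ for the reverse inequality.

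The differences are cosmetic. Your back-and-forth composite is unnecessary, since precomposing with $\operatorname{id}\otimes\iota_{F'\to F}$ already yields $C_{i,\varepsilon}(\lambda;\nu)\,T$ directly, and the genuine-eigenspace hypothesis is automatic because $P_{\lambda'}(\Pi_\lambda\otimes F)\simeq\Pi_{\lambda'}$. Also, the invariant elements involved are not a single quadratic expression but the higher-order elements $A^{(N)}\in U(\mathfrak g)^{G'}$, which arise from writing $P_{\lambda'}$ as a polynomial in the Casimir $c_G$ on $\Pi\otimes F$.
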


In particular, branching multiplicities may vary only across the boundaries of
$\mathcal{D}(\xi,\nu)$, namely the fences.

\subsection{Universal Scalar Identities for Symmetry Breaking Operators}
\medskip
Set
\[
   r := \left\lfloor \frac{n+1}{2} \right\rfloor,
   \qquad
   s := \left\lfloor \frac{n}{2} \right\rfloor,
\]
which are the ranks of $G$ and $G'$, respectively.
Let $F = \mathbb{C}^{n+1}$ denote the standard representation of $G$, and
let $F' = \mathbb{C}$ denote the space of $G'$-invariant vectors.
We write
\begin{equation}
\label{eqn:pr_iota}    
   \operatorname{pr}_{F \to F'}
   \colon F \to \mathbb{C},
   \qquad
   \iota_{F' \to F}
   \colon \mathbb{C} \to F
\end{equation}
for the natural projection and embedding, respectively.

\begin{theorem}[Universal scalar identity]
\label{thm:25061911a}
There exist rational functions
\[
   C_{i,\varepsilon}(\lambda;\nu),
   \qquad
   1 \le i \le r,\ \varepsilon \in \{+,-\},
\]
depending only on the parameters
$\lambda=(\lambda_1,\dots,\lambda_r)$ and
$\nu=(\nu_1,\dots,\nu_s)$,
with the following property.

Let $\Pi\in\mathcal{M}(G)$ be a representation with nonsingular
$\mathfrak{Z}(G)$-infinitesimal character $\lambda$, and let
$\pi\in\mathcal{M}(G')$ be a representation with
$\mathfrak{Z}(G')$-infinitesimal character $\nu$.
Assume that
\begin{enumerate}
\item[$\bullet$]
$2\lambda_i+\varepsilon\neq 0$ when $n$ is even;
\item[$\bullet$]
The $\mathfrak{Z}(G)$-primary component
$P_{\lambda+\varepsilon e_i}(\Pi\otimes F)$ is a genuine eigenspace for the action of $\mathfrak{Z}(G)$.
\end{enumerate}
Let
$
   T:\Pi\longrightarrow\pi
$
be a symmetry breaking operator.
Then 
$C_{i,\varepsilon}(\lambda;\nu)$
is regular at $(\lambda,\nu)$ and one has
\[
   (T \otimes \operatorname{pr}_{F \to F'})
   \circ P_{\lambda+\varepsilon e_i}
   \circ (\operatorname{id}_\Pi \otimes \iota_{F' \to F})
   =
   C_{i,\varepsilon}(\lambda;\nu)\, T .
\]

Finally, the functions
$C_{i,\varepsilon}(\lambda;\nu)$
are \emph{universal} in the sense that they depend only on the
infinitesimal characters $\lambda$ and $\nu$, and are independent of the
choice of real forms $(G,G')$ as well as of the representations
$\Pi$ and $\pi$.
\end{theorem}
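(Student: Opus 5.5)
The plan is to write the projection $P_{\lambda+\varepsilon e_i}$ as a polynomial in one $\mathfrak{g}$-invariant operator on $\Pi\otimes F$. Compressing that polynomial between $\operatorname{id}_\Pi\otimes\iota_{F'\to F}$ and $T\otimes\operatorname{pr}_{F\to F'}$ should turn each power of the operator into an element of $U(\mathfrak g)^{G'}$ applied before $T$. Finally, the Gelfand-pair structure of $(\mathfrak{o}(n+1),\mathfrak{o}(n))$ should convert these elements into scalars depending only on $(\lambda,\nu)$.

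\emph{Step 1: spectral description of $\Pi\otimes F$.} Let $\{X_a\}$ be an orthonormal basis of $\mathfrak g_{\mathbb C}$ for the trace form, and set $M:=\sum_a X_a\otimes X_a$, acting on $\Pi\otimes F$. From $\Omega_{\Pi\otimes F}=\Omega_\Pi\otimes 1+1\otimes\Omega_F+2M$ we get that $M$ acts on the $\chi_{\lambda+\eta e_j}$-primary component, modulo nilpotents, by the scalar
\[
m_j^\eta(\lambda):=\tfrac12\bigl(\|\lambda+\eta e_j\|^2-\|\lambda\|^2-c_F\bigr)=\tfrac12\bigl(2\eta\lambda_j+1-c_F\bigr),
\]
where $c_F$ is the Casimir scalar of $F$. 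By Kostant's theorem, the possible primary components of $\Pi\otimes F$ are indexed by $\lambda\pm e_j$ for $1\le j\le r$. When $n$ is even there is also the zero-weight component $\lambda$, on which $M$ acts by $m_0:=-c_F/2$.

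Nonsingularity \eqref{eqn:lmd_reg} makes the values $m_j^\eta$ pairwise distinct. For $n$ even, the hypothesis $2\lambda_i+\varepsilon\ne0$ says exactly that $m_i^\varepsilon\ne m_0$. Since $P_{\lambda+\varepsilon e_i}(\Pi\otimes F)$ is a genuine eigenspace, a Lagrange–Hermite interpolation polynomial gives
\[
P_{\lambda+\varepsilon e_i}=\frac{\prod_{(j,\eta)\ne(i,\varepsilon)}\bigl(M-m_j^\eta\bigr)^{N}\cdot(M-m_0)^{N}}{\prod_{(j,\eta)\ne(i,\varepsilon)}\bigl(m_i^\varepsilon-m_j^\eta\bigr)^{N}\cdot(m_i^\varepsilon-m_0)^{N}}.
\]
Here $N$ is large enough to kill the nilpotent parts of the other components, and the factor $(M-m_0)^N$ is omitted when $n$ is odd. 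On the target component only the first power of $(M-m_i^\varepsilon)$ matters, which is why a genuine eigenspace is required. The denominator is a polynomial in $\lambda$ that is nonzero under the hypotheses; this gives the asserted regularity.

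\emph{Step 2: compression.} Decompose $\mathfrak g_{\mathbb C}=\mathfrak g'_{\mathbb C}\oplus\mathfrak p$ with $\mathfrak p\simeq\mathbb C^n$, and let $e_0$ span the $G'$-fixed line $F'$. Then $\mathfrak g'$ annihilates $e_0$, $\mathfrak p$ moves $e_0$ into $(F')^\perp$, and $\mathfrak p$ moves $(F')^\perp$ back into $\mathbb C e_0\oplus(F')^\perp$. Expanding $M^k(v\otimes e_0)$ and pairing the second factor with $\operatorname{pr}_{F\to F'}$, I will show by induction on $k$ that
\[
(\operatorname{id}\otimes\operatorname{pr})\,M^k\,(v\otimes e_0)=u_k\,v
\]
for explicit elements $u_k\in U(\mathfrak g)$. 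These $u_k$ are $G'$-invariant because $M$, $\iota$ and $\operatorname{pr}$ are $G'$-equivariant; in fact they are $O(n,\mathbb C)$-invariant, since the same holds for the complex pair.

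\emph{Step 3: reduction to characters.} Invoke the structure of the invariant algebra: $U(\mathfrak g)^{O(n,\mathbb C)}$ is generated by $\mathfrak Z(G)$ and $\mathfrak Z(G')$. This follows from the multiplicity-freeness of $O(n+1)\downarrow O(n)$ via the associated graded algebra and classical invariant theory. Hence each $u_k=\sum_l z_l z'_l$ with $z_l\in\mathfrak Z(G)$ and $z'_l\in\mathfrak Z(G')$. Since $T$ is continuous and $G'$-equivariant, $T(z_l z'_l v)=\chi_\nu(z'_l)\,T(z_l v)$. Because $\Pi$ has a genuine $\mathfrak Z(G)$-character $\lambda$, this equals $\chi_\lambda(z_l)\chi_\nu(z'_l)\,T(v)$. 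Combining with Step 1 yields $C_{i,\varepsilon}(\lambda;\nu)$ as a polynomial in $(\lambda,\nu)$ divided by the denominator above. It is universal because the expressions depend on nothing but the Lie algebra pair.

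\emph{Main obstacle.} Steps 2–3 are the hardest. One needs the generation statement for $U(\mathfrak g)^{O(n,\mathbb C)}$, including the $O$- rather than $SO$-invariance issue when using $\mathfrak Z(G)$ and $\mathfrak Z(G')$. One must also make sure that only genuine, not generalized, eigenvalues are used when $T$ is applied. If the general generation theorem proves awkward, I would instead compute $u_k$ directly as polynomials in a single "mixed Casimir" $\sum_{X\in\mathfrak p}X^2=\Omega_{\mathfrak g}-\Omega_{\mathfrak g'}$ and its $\mathfrak g'$-conjugates, and reduce via the Gelfand–Tsetlin relations.
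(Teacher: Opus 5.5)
Your architecture matches the paper's. Your $M$ is the shifted Casimir $\widetilde c_G$ of \eqref{eqn:cGtilde}, Step~1 is Proposition~\ref{prop:25061805} with Corollary~\ref{cor:250624}, your $u_k$ are the elements $A^{(k)}$ of Proposition~\ref{prop:24113019}, and Step~3 is Lemma~\ref{lem:25050506} combined with Proposition~\ref{prop:on_ring_onto}. You differ in how you show $u_k\in U(\mathfrak g)^{G'}$: you use equivariance instead of the bracket computations of Lemmas~\ref{lem:250426} and~\ref{lem:25050414}. This is cleaner and gives $O(n,\mathbb C)$-invariance directly, but it must be done universally. $M$ is an $O(n+1,\mathbb C)$-invariant element of the algebra $U(\mathfrak g)\otimes\operatorname{End}(F)$, hence so is $M^k$. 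Then $u_k$ is the image of $M^k$ under the $O(n,\mathbb C)$-equivariant slice map $x\otimes A\mapsto \operatorname{pr}_{F\to F'}(Af_0)\,x$, where $f_0$ is the $G'$-fixed vector (your $e_0$). Equivariance of operators on one particular $\Pi$ only shows that $d\Pi(u_k)$ commutes with $G'$. That does not let you write $u_k$ in terms of $\mathfrak Z(G)$ and $\mathfrak Z(G')$.

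The genuine gap is universality. In Step~1 you choose $N$ large enough for the given $\Pi$, so Steps~2--3 produce $q_N(\lambda,\nu)/\varphi_{i,\varepsilon}(\lambda)^N$. Here $q_N$ is $\chi_{\lambda,\nu}$ applied to the compression of your numerator; it is the paper's $p_{i,\varepsilon,N}$ from Lemma~\ref{lem:25051803}. Nothing in your argument shows these rational functions coincide for different $N$, so you have not produced one $C_{i,\varepsilon}$ valid for all $\Pi$. The claim that the expression ``depends only on the Lie algebra pair'' holds for each fixed $N$, not for $N$ varying with $\Pi$. What is missing is the polynomial identity $q_N=\varphi_{i,\varepsilon}^{\,N-1}q_1$, which is the paper's Lemma~\ref{lem:25061804}. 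It holds at every $(\lambda,\nu)$ admitting a nonzero $T$ with $c_G$ semisimple on $\Pi\otimes F$, for instance finite-dimensional $\Pi,\pi$ of $(O(n+1),O(n))$ with $[\Pi|_{G'}:\pi]\neq0$, and these points are Zariski dense. Alternatively, use the full form of the Kostant theorem you already cite: $\prod_{\mu\in\Delta(F)}\bigl(c_G-\chi_{\lambda+\mu}(c_G)\bigr)$ annihilates $\Pi\otimes F$. For $n$ odd, apply it to the two $\mathfrak Z(\mathfrak g)$-eigenspaces of $\Pi$, with characters $\lambda$ and $s_r\lambda$, which give the same Casimir values. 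The factor $c_G-\chi_{\lambda+\varepsilon e_i}(c_G)$ is invertible on the other components, and the target component is a genuine eigenspace. So $N=1$ works for every $\Pi$, and $C_{i,\varepsilon}=q_1/\varphi_{i,\varepsilon}$ is manifestly independent of $\Pi$.
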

Next, we give an explicit determination of the rational functions
appearing in the universal scalar identity.

For $1 \le i \le r$, we define polynomials
$\varphi_{i,\varepsilon}(\lambda)$
in the variables $\lambda_1, \dots, \lambda_r$ by
\begin{equation}
\label{eqn:phi_ei_lmd}
\varphi_{i,\varepsilon}(\lambda)
:=
\begin{cases}
\displaystyle
2 \varepsilon\, \lambda_i
\prod_{\substack{1 \le j \le r \\ j \ne i}}
(\lambda_i - \lambda_j)(\lambda_i + \lambda_j),
& \text{if $n = 2r - 1$,}
\\[0.8em]
\displaystyle
(2\lambda_i + \varepsilon)\, \lambda_i
\prod_{\substack{1 \le j \le r \\ j \ne i}}
(\lambda_i - \lambda_j)(\lambda_i + \lambda_j),
& \text{if $n = 2r$.}
\end{cases}
\end{equation}

We also introduce polynomials
$g_{i,\varepsilon}(\lambda,\nu)$
in the variables
$\lambda_1,\dots,\lambda_r$
and
$\nu_1,\dots,\nu_s$,
for $\varepsilon\in\{+,-\}\equiv\{1,-1\}$ and $1\le i\le r$,
defined by
\begin{equation}
\label{eqn:gi_e}
g_{i,\varepsilon}(\lambda,\nu)
=
\begin{cases}
\displaystyle
\varepsilon\,\lambda_i
\prod_{j=1}^{r-1}
(\lambda_i-\nu_j+\tfrac{1}{2}\varepsilon)
(\lambda_i+\nu_j+\tfrac{1}{2}\varepsilon),
& \text{if $n = 2r - 1$,}
\\[1.2em]
\displaystyle
\prod_{j=1}^{r}
(\lambda_i-\nu_j+\tfrac{1}{2}\varepsilon)
(\lambda_i+\nu_j+\tfrac{1}{2}\varepsilon),
& \text{if $n = 2r$.}
\end{cases}
\end{equation}

\begin{theorem}[Explicit formula for the universal constants]
\label{thm:25061911b}
The rational functions
$C_{i,\varepsilon}(\lambda;\nu)$
appearing in Theorem~\ref{thm:25061911a}
are given explicitly by
\begin{equation}
\label{eqn:26032009}
   C_{i,\varepsilon}(\lambda;\nu)
   =
   \frac{g_{i,\varepsilon}(\lambda,\nu)}
   {\varphi_{i,\varepsilon}(\lambda)}.
\end{equation}
Here $\varphi_{i,\varepsilon}(\lambda)$ denotes the polynomial defined in
\eqref{eqn:phi_ei_lmd}, which is nonvanishing under the assumptions of
Theorem~\ref{thm:25061911a}.
\end{theorem}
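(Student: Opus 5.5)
We need to compute the constant $C_{i,\varepsilon}(\lambda;\nu)$ of Theorem~\ref{thm:25061911a}. The plan is to express the primary projector $P_{\lambda+\varepsilon e_i}$ on $\Pi\otimes F$ as a polynomial in a single \emph{Casimir-type operator}. Then we move that operator across the symmetry breaking operator $T$ and read off the resulting scalar.

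\textbf{Step 1: the projector as a Lagrange interpolation.} Let
\[
\Omega:=\sum_{a<b} X_{ab}\otimes E_{ab}\in U(\mathfrak g)\otimes\operatorname{End}(F)
\]
be the ``mixed'' Casimir, so that $\Omega=\tfrac12\bigl(\Delta(C_G)-C_G\otimes 1-1\otimes C_G\bigr)$. On $P_\lambda(\Pi)\otimes F$, the generalized eigenvalues of $\Omega$ are $c(\lambda+\mu)-c(\lambda)-c(\mu_F)$ with $\mu\in\{\pm e_j\}$, together with $\mu=0$ when $n$ is even. Here $c$ is the Casimir eigenvalue. These eigenvalues are, up to an affine normalization, $\varepsilon\lambda_j$ (with a shift by $\tfrac12$ absorbed), and they are pairwise distinct because $\lambda$ is nonsingular and $2\lambda_i+\varepsilon\neq0$. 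Under the genuine-eigenspace hypothesis, $\Omega$ acts semisimply on the relevant summand. Hence
\[
P_{\lambda+\varepsilon e_i}=\prod_{\mu\neq\varepsilon e_i}\frac{\Omega-\omega(\mu)}{\omega(\varepsilon e_i)-\omega(\mu)}
\]
on $\Pi\otimes F$. The denominator is, up to a constant, exactly $\varphi_{i,\varepsilon}(\lambda)$: the factors are $(\lambda_i\mp\lambda_j)$, $\lambda_i$ from $\mu=-\varepsilon e_i$, and $(2\lambda_i+\varepsilon)$ from $\mu=0$ when $n$ is even. This immediately gives the regularity statement, since the remaining factors turn out to be polynomial.

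\textbf{Step 2: compressing to $G'$.} Decompose $F=F'^{\perp}\oplus\mathbb C v_0$, where $v_0$ spans the $G'$-fixed line, and write
\[
\Omega=\Omega_{\mathfrak g'}+\sum_a X_{a0}\otimes E_{a0}.
\]
We then compute
\[
\operatorname{pr}\circ\, p(\Omega)\circ\iota \in U(\mathfrak g)
\]
for a polynomial $p$. Because the two pieces have this block structure, $\operatorname{pr}\circ\,\Omega^k\circ\iota$ equals the $(0,0)$ matrix entry of the $(n+1)\times(n+1)$ matrix $(X_{ab})^k$ in $U(\mathfrak g)$. This is a Capelli/Gelfand-invariant-type element, and it is $G'$-invariant. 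Composing with $T$, these entries satisfy $T\circ u=\chi(u)\,T$ whenever $u$ lies in the subalgebra generated by $\mathfrak Z(G)$ and $\mathfrak Z(G')$.

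The key fact I would invoke is the classical one: the $(0,0)$ entries of the powers of the generator matrix lie in $\mathfrak Z(\mathfrak g)\mathfrak Z(\mathfrak g')$. This is the Zhelobenko / Molev quasi-determinant identity for orthogonal Gelfand--Tsetlin algebras. Moreover, the generating function
\[
\sum_k z^{-k-1}\bigl((X)^k\bigr)_{00}=\bigl((z-X)^{-1}\bigr)_{00}
\]
acts on $\chi_\lambda\otimes\chi_\nu$ by a ratio of characteristic polynomials, namely
\[
\frac{\prod_j(z-\nu_j^{\pm}\text{-shifts})}{\prod_j(z-\lambda_j^{\pm}\text{-shifts})},
\]
with an extra factor of $z$ depending on parity. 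Using this identity instead of the Lagrange product directly, the compressed projector becomes a residue at $z=\omega(\varepsilon e_i)$. The residue equals the numerator evaluated there, which is
\[
\prod_j(\lambda_i\pm\nu_j+\tfrac12\varepsilon),
\]
together with the factor $\varepsilon\lambda_i$ when $n$ is odd. This is $g_{i,\varepsilon}$, divided by the derivative of the denominator, which is $\varphi_{i,\varepsilon}$.

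\textbf{Main obstacle.} The hard step is pinning down the exact shifts in the generating-function identity for $\mathfrak o(n+1)\supset\mathfrak o(n)$. This means getting the $\pm\tfrac12\varepsilon$ shifts, the extra factor $\lambda_i$ in the odd case versus $(2\lambda_i+\varepsilon)$ in the even case, and the normalizations of $\omega$ and of the Harish-Chandra isomorphism (with $\rho$-shifts) all consistent. I would fix these by direct verification in the rank-one cases, $n=1,2$, and then check both sides on finite-dimensional representations. Weyl's branching law gives nonzero $T$ on a Zariski-dense set of $(\lambda,\nu)$, and both sides are rational in $(\lambda,\nu)$. Agreement on that dense set therefore forces the identity, which confirms the formula.
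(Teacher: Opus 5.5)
Up to the compression step your argument matches the paper's. Your $\Omega=\sum_{a<b}X_{ab}\otimes X_{ba}$ is the shifted Casimir $\widetilde c_G$ of \eqref{eqn:cGtilde}. Your Lagrange numerator is $\phi_\lambda^{\lambda+\varepsilon e_i}$, whose value on the target component is exactly $\varphi_{i,\varepsilon}(\lambda)$. And $\operatorname{pr}\circ\Omega^k\circ\iota$ is the element $A^{(k)}=((-X)^k)_{00}$ of Proposition~\ref{prop:24113019}, with $X=(X_{ab})_{0\le a,b\le n}$ and $X_{aa}=0$.

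The routes diverge when $\chi_{\lambda,\nu}$ is evaluated on these elements, and there your proposal has a genuine gap. All of the explicit content of the theorem lives in the numerator of the resolvent identity you invoke: the shifts $\pm\tfrac12\varepsilon$, and the extra factor $\varepsilon\lambda_i$ for odd $n$. You neither state nor derive that numerator, and your calibration plan cannot supply it. Weyl's branching law only tells you that $T\neq0$ on a dense set. It does not tell you what scalar $(T\otimes\operatorname{pr}_{F\to F'})\circ P_{\lambda+\varepsilon e_i}\circ(\operatorname{id}_\Pi\otimes\iota_{F'\to F})$ produces there. So ``checking both sides on finite-dimensional representations'' presupposes the very computation, and the cases $n=1,2$ cannot fix shifts that depend on $n$.

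The missing idea is the paper's rigidity argument in Section~\ref{sec:gi_compute}:
\begin{enumerate}
\item The scalar $p_{i,\varepsilon}(\lambda,\nu)$ of Lemma~\ref{lem:25051803} is a polynomial of degree at most $n$, invariant under $W_i\times W_{G'}$, with $p_{i,+}(\lambda,\nu)=p_{i,-}(s_i\lambda,\nu)$.
\item It vanishes on $\lambda_1-\nu_1-\tfrac12=0$. When $\mu_1=\mu_1'$, interlacing forbids $F^{O(n)}(\mu')$ from occurring in $F^{O(n+1)}(\mu-e_1)|_{O(n)}$; the branching law is used for \emph{vanishing}, not nonvanishing.
\item Divisibility and the degree count leave only a constant ($n$ even) or a factor $\varepsilon p\lambda_i+q$ ($n$ odd). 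The trivial and standard representations pin these down (Lemmas~\ref{lem:25051002} and~\ref{lem:25051016}).
\end{enumerate}

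Two further points need repair.

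First, Step~1 does not follow from its stated justification. Theorem~\ref{thm:25061911a} assumes only that the \emph{target} component is a genuine eigenspace. Your claim that the $\omega(\mu)$ are pairwise distinct is too strong; only $\omega(\varepsilon e_i)$ is guaranteed to be separated. For $n$ even, $\lambda+be_j$ and $\lambda$ are $W_G$-conjugate when $2\lambda_j+b=0$ with $(b,j)\neq(\varepsilon,i)$. The merged primary component can then carry Casimir Jordan blocks, as under wall-crossing, so your Lagrange product need not vanish there. The formula $P_{\lambda+\varepsilon e_i}=\varphi_{i,\varepsilon}(\lambda)^{-1}\phi_\lambda^{\lambda+\varepsilon e_i}$ is still true. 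To prove it you need Kostant's theorem that $\prod_{\mu\in\Delta(F)}(\Omega-\omega(\mu))$ annihilates $\Pi\otimes F$, or the paper's high powers (Proposition~\ref{prop:25061805}).

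Second, $\chi_\lambda\otimes\chi_\nu$ on $\mathfrak Z(\mathfrak g)\otimes\mathfrak Z(\mathfrak g')$ is unavailable for groups like $O(p,q)$, whose irreducibles need not have a $\mathfrak Z(\mathfrak g)$-character. You need the $(0,0)$-entries to lie in $\mathfrak Z(G)\mathfrak Z(G')$. This follows from their $O(n,\mathbb C)$-invariance (Lemmas~\ref{lem:25050414} and~\ref{lem:25050506}).

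Your predicted shape is nevertheless right. Set $\omega(\pm e_j)=\pm\lambda_j-\tfrac{n-1}{2}$, $\omega(0)=-\tfrac n2$ and $b^{(0)}:=1$. Then the paper's result, together with Kostant's identity, is equivalent to
\[
\sum_{k\ge0}b^{(k)}(\lambda,\nu)\,z^{-k-1}
=\frac{Q(z)}{\prod_{\mu\in\Delta(F)}\bigl(z-\omega(\mu)\bigr)},
\qquad
Q(z)=\Bigl(z+\tfrac{n-1}{2}\Bigr)^{\delta}\prod_{j=1}^{s}\Bigl(\bigl(z+\tfrac n2\bigr)^2-\nu_j^2\Bigr),
\]
with $\delta=1$ for $n$ odd and $\delta=0$ for $n$ even. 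This reproduces $b^{(1)}=0$ and the value of $b^{(2)}$ in Example~\ref{ex:blmdnu}.

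Proving or precisely citing this identity would complete your route, and it would buy something. The compression of $(\phi_\lambda^{\lambda+\varepsilon e_i})^N$ is then $Q(\omega(\varepsilon e_i))\,\varphi_{i,\varepsilon}(\lambda)^{N-1}=\varphi_{i,\varepsilon}(\lambda)^{N-1}g_{i,\varepsilon}(\lambda,\nu)$ for all $N$ at once. That is Theorem~\ref{thm:25061808} without Lemma~\ref{lem:25061804}. The paper's route needs only Cooper's theorem, Weyl's branching law, and two explicit examples.
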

As a consequence of Theorems~\ref{thm:25061911a}
and~\ref{thm:25061911b}, we obtain the following result.

\begin{theorem}
\label{thm:25051110}
Let $\Pi \in \mathcal{M}(G)$ have a nonsingular
$\mathfrak{Z}(G)$-infinitesimal character
$\lambda$, and let
$\pi \in \mathcal{M}(G')$ have
$\mathfrak{Z}(G')$-infinitesimal character
$\nu$.
Let
$
   T \colon \Pi \longrightarrow \pi
$
be a symmetry breaking operator.

Fix $\varepsilon \in \{+,-\}$ and an index $1 \le i \le r$.
Assume that the $\mathfrak{Z}(G)$-primary component
$P_{\lambda+\varepsilon e_i}(\Pi \otimes F)$
is a genuine eigenspace for the action of $\mathfrak{Z}(G)$.
If
\begin{equation}
\label{eqn:25061912}
   \lambda_i + \tfrac{1}{2}\varepsilon
   \notin
   \{\pm \nu_j : 1 \le j \le s\}
   \quad
   (\cup \{0\} \text{ when $n$ is even}),
\end{equation}
then the operator
\[
   T \otimes \operatorname{pr}_{F \to F'}
\]
does not vanish on the primary component
\[
   P_{\lambda+\varepsilon e_i}(\Pi \otimes F)
   =
   \psi_{\lambda}^{\lambda+\varepsilon e_i}(\Pi).
\]
\end{theorem}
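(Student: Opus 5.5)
The statement to prove is Theorem~\ref{thm:25051110}. The plan is to derive it directly from the universal scalar identity (Theorem~\ref{thm:25061911a}) together with the explicit formula (Theorem~\ref{thm:25061911b}).

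Suppose $T\neq 0$, since otherwise there is nothing to prove. We apply Theorem~\ref{thm:25061911a} with the given $i$ and $\varepsilon$. Its hypotheses are the genuine-eigenspace assumption, which is given, and, for even $n$, the condition $2\lambda_i+\varepsilon\neq 0$. The latter follows from \eqref{eqn:25061912}, because $\lambda_i+\tfrac12\varepsilon\neq 0$ is part of that condition when $n$ is even. Theorem~\ref{thm:25061911a} then gives
\[
(T\otimes \operatorname{pr}_{F\to F'})\circ P_{\lambda+\varepsilon e_i}\circ(\operatorname{id}_\Pi\otimes\iota_{F'\to F})=C_{i,\varepsilon}(\lambda;\nu)\,T .
\]
The map $P_{\lambda+\varepsilon e_i}\circ(\operatorname{id}\otimes\iota)$ takes values in the primary component $P_{\lambda+\varepsilon e_i}(\Pi\otimes F)$. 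So if $T\otimes\operatorname{pr}$ vanished on that component, the left-hand side would be zero. Since $T\neq0$, this forces $C_{i,\varepsilon}(\lambda;\nu)=0$. It therefore suffices to show $C_{i,\varepsilon}(\lambda;\nu)\neq 0$.

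By Theorem~\ref{thm:25061911b}, $C_{i,\varepsilon}=g_{i,\varepsilon}/\varphi_{i,\varepsilon}$, and $\varphi_{i,\varepsilon}(\lambda)$ is finite and nonzero under these hypotheses, using nonsingularity of $\lambda$ and $2\lambda_i+\varepsilon\neq0$. Hence we only need $g_{i,\varepsilon}(\lambda,\nu)\neq 0$. Each factor of $g_{i,\varepsilon}$ has the form $\lambda_i\mp\nu_j+\tfrac12\varepsilon$, and it vanishes exactly when $\lambda_i+\tfrac12\varepsilon=\pm\nu_j$, which \eqref{eqn:25061912} excludes.

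The case split is as follows.
\begin{itemize}
\item For $n=2r$ we have $s=r$, and $g_{i,\varepsilon}$ is exactly the product of these factors over $1\le j\le r$.
\item For $n=2r-1$ we have $s=r-1$. There is the extra factor $\varepsilon\lambda_i$, which is nonzero by nonsingularity (Definition~\ref{def:lmd_reg}), and the remaining factors run over $1\le j\le r-1=s$.
\end{itemize}
In both cases $g_{i,\varepsilon}\neq 0$, which gives the contradiction. The identification $P_{\lambda+\varepsilon e_i}(\Pi\otimes F)=\psi_\lambda^{\lambda+\varepsilon e_i}(\Pi)$ follows from \eqref{eqn:tr_functor}, since $P_\lambda(\Pi)=\Pi$.

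The only delicate point is the bookkeeping that matches the ranges of $j$ and the extra factors in each parity of $n$ to the excluded set in \eqref{eqn:25061912}. In particular, the ``$\cup\{0\}$ when $n$ is even'' clause is used only to keep $\varphi_{i,\varepsilon}$ nonzero, that is, to secure regularity of $C_{i,\varepsilon}$ at $(\lambda,\nu)$. All the real content lies in Theorems~\ref{thm:25061911a} and~\ref{thm:25061911b}.
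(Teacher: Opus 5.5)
Your proposal is correct and follows the paper's proof. The paper likewise deduces the theorem from Theorems~\ref{thm:25061911a} and~\ref{thm:25061911b} by noting that \eqref{eqn:25061912} forces $g_{i,\varepsilon}(\lambda,\nu)\neq 0$, and your explicit check that the $\cup\{0\}$ clause supplies the hypothesis $2\lambda_i+\varepsilon\neq 0$ of Theorem~\ref{thm:25061911a} fills in a step the paper leaves implicit. One wording correction: when $T=0$ the conclusion is false, not vacuous, so the statement tacitly assumes $T\neq 0$, which is exactly the assumption you then use.
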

\begin{proof}
[Proof of Theorem~\ref{thm:25051110}
assuming Theorems~\ref{thm:25061911a}
and~\ref{thm:25061911b}]
The assumption~\eqref{eqn:25061912} implies that
$g_{i,\varepsilon}(\lambda,\nu) \neq 0$.
The conclusion therefore follows immediately from
Theorems~\ref{thm:25061911a} and~\ref{thm:25061911b}.
\end{proof}
\begin{remark}
\label{rem:250619}
As will be clear from the proof given later, the second assumption in
Theorem~\ref{thm:25061911a}, as well as the assumption in
Theorem~\ref{thm:25051110}, can be relaxed to the requirement that the Casimir
element $c_G$ act by scalar multiplication on the
$\mathfrak{Z}(G)$-primary component
$P_{\lambda+\varepsilon e_i}(\Pi\otimes F)$.
\end{remark}
\subsection{Proof of the Stability Theorem}

In this subsection, assuming Theorems~\ref{thm:25061911a}
and~\ref{thm:25061911b}, we complete the proof of the stability theorem
for branching multiplicities.
More precisely, since the first implication has already been established,
the argument reduces to the following chain of implications:
\[
   \text{Theorems~\ref{thm:25061911a} and~\ref{thm:25061911b}}
   \;\Longrightarrow\;
   \text{Theorem~\ref{thm:25051110}}
   \;\Longrightarrow\;
   \text{Theorem~\ref{thm:250515}}.
\]

Thus, the proof of Theorem~\ref{thm:250515} ultimately rests on the
universal scalar identities established in
Theorems~\ref{thm:25061911a} and~\ref{thm:25061911b}.
After this subsection, the remainder of the paper is devoted to the
proofs of these two theorems.

Accordingly, we focus here on the final step, namely the implication
\[
   \text{Theorem~\ref{thm:25051110}}
   \;\Longrightarrow\;
   \text{Theorem~\ref{thm:250515}}.
\]
To this end, we first prepare a pair of auxiliary lemmas.

Recall that $\{e_i\}_{1 \le i \le r}$ denotes the standard basis of
$\mathbb{Z}^r$, and 
\[
   \mathcal{E}
   =
   \{\pm e_i : 1 \le i \le r\}
   \subset \mathbb{Z}^r.
\]

\begin{proposition}
\label{prop:250623}
Let $\Pi \in \mathcal{M}(G)$ have a nonsingular
$\mathfrak{Z}(G)$-infinitesimal character $\lambda$, and let
$\pi \in \mathcal{M}(G')$ have
$\mathfrak{Z}(G')$-infinitesimal character $\nu$.
Let $\lambda' \in \mathbb{C}^r$ satisfy
$\lambda' - \lambda \in \mathcal{E}$.

\noindent
{\rm(1)}\enspace
Assume that $\lambda$ satisfies \eqref{eqn:25061912} and that the
$\mathfrak{Z}(G)$-primary component
\[
   P_{\lambda'}(\Pi \otimes F)
\]
is a genuine eigenspace.
Then one has
\[
   \bigl[
      \psi_{\lambda}^{\lambda'}(\Pi)\big|_{G'} : \pi
   \bigr]
   \ge
   \bigl[
      \Pi\big|_{G'} : \pi
   \bigr].
\]

\medskip
\noindent
{\rm(2)}\enspace
Assume further that
\[
   P_{\lambda}
   \bigl(
      P_{\lambda'}(\Pi \otimes F) \otimes F
   \bigr)
\]
is a genuine eigenspace of $\mathfrak{Z}(G)$.
Then
\[
   \bigl[
      \psi_{\lambda'}^{\lambda}
      \circ
      \psi_{\lambda}^{\lambda'}(\Pi)\big|_{G'} : \pi
   \bigr]
   \ge
   \bigl[
      \psi_{\lambda}^{\lambda'}(\Pi)\big|_{G'} : \pi
   \bigr].
\]
\end{proposition}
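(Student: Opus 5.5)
The plan is to build an injective linear map between spaces of symmetry breaking operators out of Theorem~\ref{thm:25051110}. For part (1), write $\lambda'=\lambda+\varepsilon e_i$. To each $T\in\operatorname{Hom}_{G'}(\Pi|_{G'},\pi)$ we attach
\[
   \Phi(T):=(T\otimes\operatorname{pr}_{F\to F'})\big|_{P_{\lambda'}(\Pi\otimes F)}
   \colon \psi_\lambda^{\lambda'}(\Pi)\longrightarrow \pi\otimes F'\simeq\pi .
\]
We first check that $\Phi(T)$ is a symmetry breaking operator. Since $\operatorname{pr}_{F\to F'}$ is $G'$-equivariant, $T\otimes\operatorname{pr}_{F\to F'}$ is a continuous $G'$-map $\Pi\otimes F\to\pi$. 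The primary component $P_{\lambda'}(\Pi\otimes F)$ is a closed $G$-stable direct summand, so the restriction stays continuous and $G'$-equivariant. Since $\Pi$ has infinitesimal character $\lambda$, we have $P_\lambda(\Pi)=\Pi$, and hence $P_{\lambda'}(\Pi\otimes F)=\psi_\lambda^{\lambda'}(\Pi)$.

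The map $T\mapsto\Phi(T)$ is clearly linear. The key step is injectivity, and this is exactly Theorem~\ref{thm:25051110}. Under \eqref{eqn:25061912} and the genuine-eigenspace hypothesis, it says $\Phi(T)\neq0$ whenever $T\neq0$. More concretely, by Theorems~\ref{thm:25061911a} and~\ref{thm:25061911b} we have $\Phi(T)\circ P_{\lambda'}\circ(\operatorname{id}\otimes\iota_{F'\to F})=C_{i,\varepsilon}(\lambda;\nu)\,T$ with $C_{i,\varepsilon}\neq0$. So $T$ can be recovered from $\Phi(T)$, and $\Phi$ is injective. The dimension inequality in (1) follows. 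One point needs care: when $n$ is even, \eqref{eqn:25061912} includes the condition $\lambda_i+\tfrac12\varepsilon\neq0$, which guarantees $2\lambda_i+\varepsilon\neq0$, the first hypothesis of Theorem~\ref{thm:25061911a}. Nonsingularity of $\lambda$ is assumed.

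For part (2), the plan is to apply part (1) again with $\Pi$ replaced by $\Pi_1:=\psi_\lambda^{\lambda'}(\Pi)$, which has $\mathfrak Z(G)$-infinitesimal character $\lambda'$ because $P_{\lambda'}(\Pi\otimes F)$ is a genuine eigenspace. The translation now goes from $\lambda'$ to $\lambda=\lambda'-\varepsilon e_i$, so we need condition \eqref{eqn:25061912} for the pair $(\lambda',-\varepsilon)$. That condition reads $\lambda'_i-\tfrac12\varepsilon=\lambda_i+\tfrac12\varepsilon\notin\{\pm\nu_j\}$ (and $\neq0$ for even $n$), which is the same as the hypothesis on $\lambda$ from part (1). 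The genuine-eigenspace requirement on $P_\lambda(\Pi_1\otimes F)$ is the added assumption of (2).

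The main obstacle is that $\lambda'$ must be nonsingular for Theorem~\ref{thm:25051110} to apply. If $\lambda'$ is singular (for example $\lambda'_i=\pm\lambda'_k$ or $\lambda'_i=0$), I would argue that nonsingularity enters the proof of Theorem~\ref{thm:25061911a} only through the regularity of $C_{-\varepsilon,i}$ at $\lambda'$, that is, through $\varphi_{i,-\varepsilon}(\lambda')\neq0$. I would then check this directly. Failing that, I would note that in every application inside $\Lambda(\xi)$ the weight $\lambda'$ lies in $D(\xi)_>$ and is therefore nonsingular.
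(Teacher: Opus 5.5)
This is essentially the paper's proof: (1) is the injectivity of $T\mapsto(T\otimes\operatorname{pr}_{F\to F'})\circ P_{\lambda'}$ supplied by Theorem~\ref{thm:25051110}, and (2) is the same argument with $(\Pi,\lambda,\varepsilon)$ replaced by $(\psi_\lambda^{\lambda'}(\Pi),\lambda',-\varepsilon)$, using $\lambda'_i-\tfrac12\varepsilon=\lambda_i+\tfrac12\varepsilon$. The paper also uses the nonsingularity of $\lambda'$ that you flag without comment. Your first remedy cannot work, because for nonsingular $\lambda$ (and $2\lambda_i+\varepsilon\neq0$ when $n$ is even) the condition $\varphi_{i,-\varepsilon}(\lambda')\neq0$ reduces to $h_i(\lambda')\neq0$, which is exactly nonsingularity of $\lambda'$; the operative justification is your second one, since in the proof of Theorem~\ref{thm:250515} both weights lie in $\Lambda(\xi)\subset D(\xi)_{>}$ and are therefore nonsingular.
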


\begin{proof}
Since $\lambda' = \lambda + \varepsilon e_i$ for some
$1 \le i \le r$ and $\varepsilon \in \{+,-\}$, we may apply
Theorem~\ref{thm:25051110}.
It follows that the map
\[
   \operatorname{Hom}_{G'}
   \bigl(\Pi|_{G'}, \pi\bigr)
   \longrightarrow
   \operatorname{Hom}_{G'}
   \bigl(
      P_{\lambda'}(\Pi \otimes F)\big|_{G'}, \pi
   \bigr),
\]
given by
\[
   T \longmapsto
   (T \otimes \operatorname{pr}_{F \to F'})
   \circ
   P_{\lambda'},
\]
is injective.

By definition \eqref{eqn:tr_functor}, we have
$   \psi_{\lambda}^{\lambda'}(\Pi)
   \simeq
   P_{\lambda'}(\Pi \otimes F).
$
This proves the first inequality.

For the second inequality, we interchange the roles of
$\Pi$, $\lambda$, and $\varepsilon$ with those of
$P_{\lambda'}(\Pi)$, $\lambda'$, and $-\varepsilon$, respectively.
Then
\[
   {\lambda'}_i + \tfrac{1}{2}(-\varepsilon)
   =
   \lambda_i + \tfrac{1}{2}\varepsilon,
\]
so that the assumption \eqref{eqn:25061912} remains valid.
The second inequality therefore follows by the same argument.
\end{proof}

The following lemma relies on the convexity of $\mathcal{D}(\xi,\nu)$.
In particular, any two elements in the interleaving region
$\mathcal{D}(\xi,\nu)$
can be connected by a path consisting of unit steps in the lattice
$\xi + \mathbb{Z}^{r}$.
\begin{lemma}
\label{lem:26040112}
Let $\xi$ be a nonsingular weight, and suppose that both
$(\xi,\nu)$ and $(\lambda,\nu)$ belong to $\mathcal{D}(\xi,\nu)$.
Then there exists a sequence
\[
   \lambda^{(j)} \in \xi + \mathbb{Z}^{r},
   \qquad j = 0,1,\dots,N,
\]
such that
\[
   \lambda^{(0)} = \xi,
   \qquad
   \lambda^{(N)} = \lambda,
   \qquad
   \lambda^{(j)} - \lambda^{(j-1)} \in \mathcal{E},
\]
and
\[
   (\lambda^{(j)},\nu) \in \mathcal{D}(\xi,\nu)
   \quad
   \text{for all } 1 \le j \le N.
\]
\end{lemma}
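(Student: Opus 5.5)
The plan is to prove the lemma by a greedy descent on the lattice distance $|\lambda-\eta|$ (the $\ell^1$-norm \eqref{eqn:lmd_norm}). We start at $\eta=\xi$ and show that, as long as $\eta\neq\lambda$, there is a unit step $\eta\mapsto\eta+\varepsilon e_i$ with $\varepsilon=\operatorname{sgn}(\lambda_i-\eta_i)$ that stays in $\mathcal{D}(\xi,\nu)$. Since $|\lambda-\eta|$ drops by one at each step, this yields the required sequence with $N=|\lambda-\xi|$.

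The first step is to rewrite $\mathcal{D}(\xi,\nu)$ as a system of integral linear inequalities on $\mu:=\eta-\xi\in\mathbb{Z}^r$. There are two kinds.

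\emph{Chamber conditions.} For $\alpha\in R^{+}(\xi)$, the quantity $\langle\alpha^\vee,\eta\rangle$ lies in $\langle\alpha^\vee,\xi\rangle+\mathbb{Z}\subset\mathbb{Z}$. Hence $\langle\eta,\alpha\rangle>0$ is equivalent to $\langle\alpha^\vee,\eta\rangle\ge 1$. Each such condition has the form $\pm\eta_a\pm\eta_b\ge c$ or $\pm\eta_a\ge c$.

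\emph{Sign conditions.} For $(i,j,\delta)\in\mathcal{P}(\xi,\nu)$, the number $\eta_i+\delta\nu_j$ is a half-integer. Its sign is prescribed, and it avoids $\pm\frac12$ at $\xi$. So the sign condition says that $\eta_i$ lies on a fixed side of a fixed threshold in $\xi_i+\mathbb{Z}$. Intersecting over $j,\delta$, these conditions say exactly that each coordinate $\eta_i$ lies in a fixed interval $I_i\cap(\xi_i+\mathbb{Z})$. Both $\xi_i$ and $\lambda_i$ lie in $I_i$, so moving $\eta_i$ monotonically from $\xi_i$ toward $\lambda_i$ never violates a sign condition. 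Thus only the chamber conditions need care.

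Next we choose which coordinate to move, using a tightness argument. Suppose stepping $\eta_i$ toward $\lambda_i$ breaks a chamber inequality $\ell(\eta)\ge c$. Then that inequality is tight at $\eta$, and it involves a second coordinate $a$ with coefficient of the same sign as the step in $i$. Because $\ell(\lambda)\ge c=\ell(\eta)$ and the $i$-term moves the wrong way from $\eta$ to $\lambda$, the $a$-term must move the right way. That is, coordinate $a$ also still has to move toward $\lambda_a$, and in a direction that \emph{increases} $\ell$. Single-coordinate conditions $\pm\eta_i\ge c$ cannot be the obstruction, since $\eta_i$ moves monotonically between two admissible values.

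This defines a ``blocks'' relation on the set of unfinished coordinates: $i\to a$ if moving $i$ is blocked by a tight constraint that moving $a$ would relax. The goal is to show this relation has a sink, that is, an unfinished coordinate which is not blocked; we then move it. To get acyclicity we will use that $R^{+}(\xi)$ is a positive system of the integral root subsystem $R(\xi)$. After the sign changes $\eta_a\mapsto\pm\eta_a$ making $R^{+}(\xi)$ standard within each integral class (coordinates related by integrality of $e_a\pm e_b$), the tight constraints become of the form $\eta'_a-\eta'_b=c\ge 1$, together with $\eta'_a+\eta'_b$ type constraints. Along a blocking chain the relevant strictly ordered quantities $\eta'$ increase or decrease, so a cycle would force a strict inequality $\eta'_a>\eta'_a$.

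The main obstacle is the mixed case in types $B/C/D$. There a tight constraint $\eta_a+\eta_b=c$ couples an increasing coordinate with a decreasing one, and the naive ``do all increases first'' strategy fails. The plan there is to argue directly with the real convexity of the polyhedron cut out by these inequalities. The segment from $\eta$ to $\lambda$ lies in it, so the set of constraints tight at $\eta$ and nonincreasing along $\lambda-\eta$ is itself consistent with the direction $\lambda-\eta$. We will then take $i$ to be a coordinate extremal for the order $\eta'_{1}>\eta'_2>\cdots$ on the support of $\lambda-\eta$: the largest one if it must increase, or the smallest if it must decrease. We check that no tight root inequality can block such an extremal coordinate, using that $e_a+e_b$ and $e_a-e_b$ tight constraints involve the extremal index only from one side. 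If this check fails in some configuration, the fallback is to move simultaneously along a two-step path $\pm e_a\pm e_b$ and observe that its intermediate point stays in the region by the same tightness bookkeeping.
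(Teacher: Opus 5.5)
Your reductions are correct. The interleaving conditions confine each $\eta_i$ to an interval containing $\xi_i$ and $\lambda_i$. The chamber conditions for $\alpha=\pm e_a$ involve a single coordinate, so they are harmless along a coordinatewise monotone path. A non-tight condition $\langle\alpha^\vee,\eta\rangle\ge 1$ with $\alpha=\pm e_a\pm e_b$ survives any unit step. So everything rests on showing that some unfinished coordinate is not blocked, and this is the step the proposal does not prove.

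Your acyclicity remark (the $\eta'$ are monotone along a blocking chain) only covers difference constraints $\eta'_a-\eta'_b\ge c$. For tight sum constraints you switch to an extremal-choice ``check'' that is never carried out. The fallback is vacuous: if the intermediate point of a two-step move lies in $\mathcal{D}(\xi,\nu)$, its first step was already an admissible unit step. Moreover, ``extremal on the support of $\lambda-\eta$'' may apply to neither end, for instance when the top coordinate must decrease and the bottom one must increase. The extremum has to be taken among the coordinates of one integral class moving in a given direction.

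The missing observation is that your ``main obstacle'' never occurs. By the paper's convention $\Delta_G$ is of type $B_r$, so $R(\xi)$ is a product of two kinds of blocks. The type $B$ blocks are $\{a:\xi_a\in\mathbb{Z}\}$ and $\{a:\xi_a\in\mathbb{Z}+\tfrac12\}$. The type $A$ blocks are the classes of $\pm\xi_a$ modulo $\mathbb{Z}$ with $2\xi_a\notin\mathbb{Z}$. After your sign changes, each block is a standard chamber $\eta'_{a_1}>\cdots>\eta'_{a_k}$, with $\eta'_{a_k}>0$ added in type $B$. In a $B$ block the $\eta'_a$ are distinct and $\ge\tfrac12$, so $\eta'_a+\eta'_b\ge 2$ and the sum constraints are never tight.

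Now let $a$ have the largest $\eta'$ among the coordinates of its block that must increase. Its upper neighbour $b$ is finished or decreasing, so $\eta'_b\ge\lambda'_b\ge\lambda'_a+1\ge\eta'_a+2$, and raising $\eta'_a$ by one is admissible. Symmetrically, lower the coordinate with the smallest $\eta'$ among those that must decrease; for the last condition $\eta'_{a_k}>0$, use $\eta'_a-1\ge\lambda'_a$. Equivalently, in your blocking graph an increasing coordinate can only point to an increasing one above it. A decreasing coordinate can only point to a decreasing one below it or to an increasing one. Hence there is no cycle.

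For comparison, the paper's proof moves $\lambda$ one unit toward $\xi$ in an unspecified coordinate and justifies this only through the moved coordinate. That handles the interleaving conditions but not $\eta_a-\eta_b\ge 1$. For $\xi=(2,1)$, $\lambda=(4,3)$ and $\mathcal{P}(\xi,\nu)=\emptyset$, the move to $(3,3)$ leaves $D(\xi)_{>}$. So your insistence on choosing the coordinate is correct, and the observation above is what completes the argument.
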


\begin{proof}
The existence of a sequence
$\{\lambda^{(j)}\}_{0 \le j \le N}$ as in the statement defines an
equivalence relation on the set of nonsingular dominant elements in
$
   \Lambda(\xi)
   =
   (\xi + \mathbb{Z}^{r}) \cap D(\xi)_{>}
$;
see \eqref{eqn:260331_FJ}.
We prove the lemma by induction on the norm $|\lambda-\xi|$,
defined in \eqref{eqn:lmd_norm}.

Assume that $|\lambda-\xi| \ge 1$.
Then there exist an index $1 \le i \le r$ and
$\varepsilon \in \{+,-\}$ such that
\[
   |(\lambda + \varepsilon e_i) - \xi|
   <
   |\lambda - \xi|.
\]

The region $\mathcal{D}(\xi,\nu)$ is defined by linear inequalities in $\lambda$,
including both the interleaving condition
$\operatorname{sgn}(\lambda,\nu)=\operatorname{sgn}(\xi,\nu)$
(Definition~\ref{def:26040107})
and the dominance condition defining $D(\xi)_{>}$.
Hence these inequalities are preserved under the move
$\lambda \mapsto \lambda + \varepsilon e_i$
whenever they hold for both $\lambda$ and $\xi$.
Indeed,
\begin{alignat*}{3}
   \lambda_i \ge {}& (\lambda+\varepsilon e_i)_i
    \ge {}& \xi_i,
   \qquad &\text{if } \varepsilon = -, \\
   \lambda_i \le {}& (\lambda+\varepsilon e_i)_i
    \le {}& \xi_i,
   \qquad &\text{if } \varepsilon = +.
\end{alignat*}
Therefore, $\lambda + \varepsilon e_i \in \mathcal{D}(\xi,\nu)$.

The assertion now follows by induction.
\end{proof}

\begin{proof}
[Proof of Theorem~\ref{thm:250515} assuming Theorem~\ref{thm:25051110}]
By Lemma~\ref{lem:26040112}, it suffices to prove Theorem~\ref{thm:250515}
in the case where $\lambda - \xi \in \mathcal{E}$.
In this situation, we have
\[
   \psi_{\xi}^{\lambda}(\Pi_{\xi})
   \simeq
   \Pi_{\lambda},
   \qquad
   \psi_{\lambda}^{\xi}(\Pi_{\lambda})
   \simeq
   \Pi_{\xi},
\]
since
$
   \Pi \colon D(\xi)_{>} \longrightarrow \mathcal{M}(G)
$
is a reduced coherent family.
Therefore, all the assumptions of Proposition~\ref{prop:250623} are satisfied, and hence
\[
   [\Pi_{\lambda}|_{G'} : \pi]
   =
   [\Pi_{\xi}|_{G'} : \pi].
\]
This completes the proof of Theorem~\ref{thm:250515}.
\end{proof}
The remaining part of the paper is to 
give a proof of Theorems~\ref{thm:25061911a} and~\ref{thm:25061911b} based on
Theorem~\ref{thm:25061808}.

\section{Projection by Polynomials of the Casimir Operator}
\label{sec:Casimir}

This section constitutes the first step in the proof of
Theorems~\ref{thm:25061911a} and~\ref{thm:25061911b}.
Although translation functors for higher-rank groups $G$ are in general
highly nontrivial, the key observation in the present setting is that
their effect can be controlled solely through the action of the Casimir element.
The goal of this section is to establish this reduction, which allows us
to bypass the full complexity of translation theory and to deduce the relevant properties
from the spectral behavior of the Casimir element.

\subsection{Projection by a Polynomial of the Casimir Element}

Let $\mathfrak{g} = \mathfrak{o}(n+1)$, and define elements
$X_{ij} \in \mathfrak{g}$ by
\begin{equation}
\label{eqn:Xij}
   X_{ij} := E_{ij} - E_{ji},
   \qquad
   0 \le i \neq j \le n .
\end{equation}
We fix the normalization of the Casimir element $c_G$ by
\begin{equation}
\label{eqn:cG}
   c_G := - \sum_{0 \le i < j \le n} X_{ij}^2 .
\end{equation}
On the standard representation $F = \mathbb{C}^{n+1}$,
the Casimir element $c_G$ acts by the scalar $n$.
More generally, if $\Pi \in \mathcal{M}(G)$ has
$\mathfrak{Z}(G)$-infinitesimal character $\lambda$,
then $c_G$ acts on $\Pi$
by the scalar $\|\lambda\|^2 - \|\rho_{\mathfrak g}\|^2$.

The set of weights of $F$ in $\mathfrak{j}_\mathbb C^*$ is given by
\[
   \Delta(F)
   =
   \begin{cases}
      \mathcal{E},
      & \text{if $n$ is odd}, \\
      \mathcal{E} \cup \{0\},
      & \text{if $n$ is even},
   \end{cases}
\]
where $\mathcal{E} = \{\pm e_i : 1 \le i \le r\}$.

For $1 \le i \le r$ and $\varepsilon \in \{+,-\}$,
 we introduce a
$\mathfrak{Z}(G)$-valued polynomial in $\lambda$ by
\begin{equation}
\label{eqn:phi_ae}
   \phi_{\lambda}^{\lambda + \varepsilon e_i}
   :=
   \prod_{\mu \in \Delta(F)\setminus\{\varepsilon e_i\}}
   \frac{1}{2}
   \bigl(
      c_G - \|\lambda+\mu\|^2 + \|\rho_{\mathfrak g}\|^2
   \bigr)
   \in \mathfrak{Z}(G)[\lambda].
\end{equation}

\subsection{Separation of Eigenvalues
by the Casimir Operator}
\par
We recall that $r=\left\lfloor \frac{n+1}{2} \right\rfloor$ is the rank of $G$.
Although the center $\mathfrak{Z}(G)$ is generated by $r$
algebraically independent elements,
a single central element—namely, the Casimir element $c_G$—is sufficient
to separate distinct infinitesimal characters
in the tensor product representation $\Pi \otimes F$
 when $F$ is the standard representation.

This observation leads to an explicit identity governing symmetry
breaking under translation, which is formulated in
Theorem~\ref{thm:25061808} below.
Our main results—Theorems~\ref{thm:25061911a}
and~\ref{thm:25061911b}—are derived from
Theorem~\ref{thm:25061808}; see
Section~\ref{subsec:pf_thm}.

Recall from
\eqref{eqn:gi_e} and
\eqref{eqn:phi_ei_lmd}
the definitions of the polynomials
$g_{i,\varepsilon}(\lambda,\nu)$ and
$\varphi_{i,\varepsilon}(\lambda)$, respectively.

In the following statement,
we do not assume that the Casimir element $c_G$ acts on
$P_{\lambda+\varepsilon e_i}(\Pi \otimes F)$
by scalar multiplication.
\begin{theorem}
\label{thm:25061808}
Let $\Pi \in \mathcal{M}(G)$ have
$\mathfrak{Z}(G)$-infinitesimal character
$\lambda$,
let $\pi \in \mathcal{M}(G')$ have
$\mathfrak{Z}(G')$-infinitesimal character
$\nu$,
and let
$
   T \colon \Pi \to \pi
$
be a symmetry breaking operator.
Then, for any $\varepsilon \in \{+,-\}$,
any $1 \le i \le r$,
and any $N \in \mathbb{N}_{+}$,
one has
\begin{equation}
\label{eqn:Tgi_N}
   (T \otimes \operatorname{pr}_{F \to F'})
   \circ
   (\phi_{\lambda}^{\lambda+\varepsilon e_i})^{N}
   \circ
   (\operatorname{id}_\Pi \otimes \iota_{F' \to F})
   =
   \varphi_{i,\varepsilon}(\lambda)^{N-1}
   g_{i,\varepsilon}(\lambda,\nu)\,
   T.
\end{equation}
\end{theorem}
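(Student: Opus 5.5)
Write $\Omega := c_G\otimes 1 + 1\otimes c_G + 2\sum_{p<q} X_{pq}\otimes X_{pq}$ for the Casimir element acting on $\Pi\otimes F$ via the coproduct. Since $c_G$ acts by $\|\lambda\|^2-\|\rho_{\mathfrak g}\|^2$ on $\Pi$ and by $n$ on $F$, the action is
\[
c_G|_{\Pi\otimes F}=\|\lambda\|^2-\|\rho_{\mathfrak g}\|^2+n-2\,\mathcal{X},\qquad \mathcal{X}:=\sum_{0\le p<q\le n}X_{pq}\otimes X_{pq}.
\]
Hence each factor $\tfrac12(c_G-\|\lambda+\mu\|^2+\|\rho_{\mathfrak g}\|^2)$ of $\phi_\lambda^{\lambda+\varepsilon e_i}$ acts on $\Pi\otimes F$ as $a_\mu-\mathcal{X}$ with an explicit scalar $a_\mu=\tfrac12(\|\lambda\|^2+n-\|\lambda+\mu\|^2)$. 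The task is therefore to compute the compressions
\[
A_k:=(T\otimes \mathrm{pr}_{F\to F'})\circ \mathcal{X}^k\circ(\mathrm{id}\otimes\iota_{F'\to F}),
\]
and then $A_k$ applied to polynomials in $\mathcal{X}$.

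I would use the basis $e_0,\dots,e_n$ of $F$ with $F'=\mathbb{C}e_0$, so that $G'$ fixes $e_0$ and $\mathfrak g'=\mathrm{span}\{X_{pq}:1\le p<q\}$. Split $\mathcal{X}=\sum_{q\ge1}X_{0q}\otimes X_{0q}+\mathcal{X}'$. The first piece maps $\Pi\otimes e_0\to\Pi\otimes F'^\perp$ and back, while $\mathcal{X}'$ preserves $\Pi\otimes F'^\perp$ and kills $\Pi\otimes e_0$. The key structural point is $G'$-equivariance of $T$: $T\circ X=X\circ T$ for $X\in\mathfrak g'$. Using this, every occurrence of $\mathfrak g'$-elements in a word can be moved through $T$ onto $\pi$, where the $\mathfrak g'$-Casimir $c_{G'}=-\sum_{1\le p<q}X_{pq}^2$ acts by $\|\nu\|^2-\|\rho_{\mathfrak g'}\|^2$. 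Concretely, I would show that $A_k=T\circ Q_k$ for some $Q_k\in U(\mathfrak g)^{G'}$, generated by $c_G$ and $c_{G'}$ (and the elements $\sum_q X_{0q}(\text{stuff})X_{0q}$). The reduction relies on the relation $\sum_{q}X_{0q}^2=c_{G'}-c_G$ and on commutator identities $[X_{pq},X_{0q}]$ to reorder. This gives a generating function $\sum_k A_k t^k=T\cdot R(t;\lambda,\nu)$ with $R$ rational, which is equivalent to computing $(T\otimes\mathrm{pr})(z-\mathcal{X})^{-1}(\mathrm{id}\otimes\iota)$.

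The cleaner route is to recognise that this resolvent compression is a ratio of characteristic-type polynomials. I expect
\[
(T\otimes\mathrm{pr})\circ(z-\mathcal{X})^{-1}\circ(\mathrm{id}\otimes\iota)=\frac{\prod_{j}(z-b_j^+)(z-b_j^-)}{\prod_{\mu\in\Delta(F)}(z-a_\mu)}\,T,
\]
where the $b_j^\pm$ are the analogous "eigenvalues" attached to $\nu\pm e_j$ for $G'$. This is a Gelfand–Tsetlin/Zhelobenko-type interlacing identity. I would prove it by induction on powers, using $\mathcal{X}'$ acting on $\Pi\otimes F'^\perp$, where $F'^\perp$ is the standard representation of $G'$, so $\mathcal{X}'$ is the analogous tensor Casimir for $G'$ whose eigenvalues are governed by $\nu$. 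A Schur-complement formula for the block decomposition $F=F'\oplus F'^\perp$ then relates the compression for $\mathcal{X}$ to that for $\mathcal{X}'$.

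Granting the resolvent formula, multiply by $\prod_{\mu\neq\varepsilon e_i}(z-a_\mu)$ and take the residue/evaluation at $z=a_{\varepsilon e_i}$; equivalently apply $\phi^N$ to the partial-fraction decomposition. Since $\phi$ acts by $\prod_{\mu\ne\varepsilon e_i}(a_\mu-\mathcal X)$, which equals $\prod_{\mu\neq\varepsilon e_i}(a_\mu-a_{\varepsilon e_i})$ on the $a_{\varepsilon e_i}$-part, we get:
\begin{itemize}
\item the $N$-th power yields the factor $\varphi_{i,\varepsilon}(\lambda)^{N-1}$ times the residue;
\item this residue equals $g_{i,\varepsilon}(\lambda,\nu)$ once $a_{\varepsilon e_i}-b_j^\pm$ is expanded as $(\lambda_i\mp\nu_j+\tfrac12\varepsilon)$-type factors.
\end{itemize}
The parity of $n$ accounts for the zero weight and the extra factors $\lambda_i$ and $(2\lambda_i+\varepsilon)$. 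Because the formula is polynomial in $\lambda$, it holds without assuming $c_G$ acts semisimply.

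The main obstacle is establishing the resolvent identity, i.e. reducing words in $X_{0q}$ and $\mathfrak g'$ to scalars via equivalence through $T$. If the Schur-complement induction proves unwieldy, I would fall back to proving the identity for generic $(\lambda,\nu)$ using a dense family, such as principal series where both sides are computed explicitly. Polynomiality in $(\lambda,\nu)$ would then extend it to all parameters.
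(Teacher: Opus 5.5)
Your final step works. Suppose you knew that $(T\otimes\operatorname{pr}_{F\to F'})\circ(z-\mathcal X)^{-1}\circ(\operatorname{id}_\Pi\otimes\iota_{F'\to F})=R(z)\,T$, with $R=Q/\prod_{\mu\in\Delta(F)}(z-a_\mu)$ independent of $(G,G',\Pi,\pi,T)$. Then $\phi^N R$ has a single simple pole at $a_{\varepsilon e_i}$, and the contour integral gives $\varphi_{i,\varepsilon}(\lambda)^{N-1}\cdot(\pm Q(a_{\varepsilon e_i}))$ with no Zariski-density step. But both inputs to that step are missing.

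\textbf{First gap: universality.} Your mechanism does not work, for two reasons. You cannot move $\mathfrak g'$-letters through $T$ in words like $\sum_{p,q}X_{q0}X_{pq}X_{0p}$, because they sit between letters not in $\mathfrak g'$. Also, $Q_k$ is not in the algebra generated by $c_G$ and $c_{G'}$ once $r\ge 2$. For instance, the degree-four-in-$\lambda$ component of the compression of $\mathcal X^4$ is $\sum_j\lambda_j^4+\sum_{j<k}\lambda_j^2\lambda_k^2$, which is not a multiple of $\|\lambda\|^4$, so higher Casimirs must enter. The missing idea is the paper's:
\begin{itemize}
\item verify, by commutator identities and $\operatorname{Ad}(g_n)$-invariance, that the explicit elements $A^{(k)}$ (your $Q_k$ up to sign) lie in $U(\mathfrak g)^{G'}$;
\item invoke Cooper's theorem $U(\mathfrak g)^{\mathfrak g'}=\mathfrak Z(\mathfrak g)\,\mathfrak Z(\mathfrak g')$, refined to $U(\mathfrak g)^{G'}=\mathfrak Z(G)\,\mathfrak Z(G')$, to write $A^{(k)}=\sum_m z'_m z_m$ with $z'_m$ passing through $T$ and $z_m$ acting on $\Pi$ by scalars.
\end{itemize}
Only this gives $T\circ A^{(k)}=\chi_{\lambda,\nu}(A^{(k)})\,T$ with a polynomial independent of all other data. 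Your density fallback presupposes exactly this.

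\textbf{Second gap: the resolvent identity.} This is the real content, and you only conjecture it. The Schur-complement reduction does not close. $\mathfrak Z(G')$ does not act by scalars on $\Pi$, so the spectrum of $\mathcal X'$ on $\Pi\otimes F'^{\perp}$ is not governed by $\nu$. And $T$ cannot be carried across the off-diagonal blocks, which contain $X_{0q}\notin\mathfrak g'$. Expanding the complement only produces the paper's elements $C^{(\ell)}$, which again need the structure theorem.

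The numerator roots are also not the literal $G'$-analogues of the $a_\mu$. The correct ones are $\tfrac n2\mp\nu_j$ for $1\le j\le s$, together with $\tfrac{n-1}{2}$ when $n$ is odd. The naive analogue is $\tfrac12(\|\nu\|^2+n-1-\|\nu\pm e_j\|^2)=\tfrac{n-2}{2}\mp\nu_j$. That choice contradicts $(T\otimes\operatorname{pr}_{F\to F'})\circ\mathcal X\circ(\operatorname{id}_\Pi\otimes\iota_{F'\to F})=0$, which forces $\sum_\mu a_\mu=\sum_\beta b_\beta$. It would also give factors $\lambda_i\pm\nu_j-\tfrac12\varepsilon$ instead of $\lambda_i\pm\nu_j+\tfrac12\varepsilon$, i.e.\ the wrong fences.

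The paper never needs a closed-form resolvent. Once $p_{i,\varepsilon}$ is known to be a universal polynomial, it is pinned down by:
\begin{itemize}
\item the degree bound $\le n$;
\item $W_i\times W_{G'}$-invariance;
\item the relation $p_{i,+}(\lambda,\nu)=p_{i,-}(s_i\lambda,\nu)$;
\item vanishing along $\lambda_1-\nu_1=\tfrac12$, forced by the $O(n+1)\downarrow O(n)$ branching law;
\item normalization at the trivial and standard representations.
\end{itemize}
The factor $\varphi_{i,\varepsilon}(\lambda)^{N-1}$ then comes from the compact case plus Zariski density.
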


In particular, in the case $N=1$, the formula simplifies to:
\begin{equation}
\label{eqn:Tgi}
  (T \otimes \operatorname{pr}_{F \to F'})\circ
   \phi_{\lambda}^{\lambda + \varepsilon e_i}
   \circ (\operatorname{id}_\Pi \otimes \iota_{F' \to F})
  =g_{i, \varepsilon}(\lambda, \nu)  T.  
\end{equation}

\subsection{Translation Functors Using Only the Casimir Operator}

For $1 \le i \le r$, we define a homogeneous polynomial of degree $2r-1$ by
\begin{equation}
\label{eqn:hi_lmd}
   h_i(\lambda)
   :=
   \lambda_i
   \prod_{\substack{1 \le j \le r \\ j \ne i}}
   (\lambda_i - \lambda_j)(\lambda_i + \lambda_j).
\end{equation}

By definition, $h_i(\lambda) \neq 0$ if $\lambda$ is nonsingular
(see \eqref{eqn:lmd_reg}), and the polynomials
$\varphi_{i,\varepsilon}(\lambda)$
in the variables $\lambda_1, \dots, \lambda_r$,
defined in \eqref{eqn:phi_ei_lmd}, are expressed as
\begin{equation*}
   \varphi_{i,\varepsilon}(\lambda)
   =
   \begin{cases}
      2 \varepsilon\, h_i(\lambda),
      & \text{if $n = 2r - 1$,}
      \\
      (2\lambda_i + \varepsilon)\, h_i(\lambda),
      & \text{if $n = 2r$.}
   \end{cases}
\end{equation*}

We recall from \eqref{eqn:pr_gen_inf} that $P_{\tau}$ denotes the
projection operator onto the $\tau$-primary component.
Under a mild assumption, the action of
$\phi_{\lambda}^{\lambda+\varepsilon e_i}
\in \mathfrak{Z}(G)[\lambda]$
on $\Pi \otimes F$ coincides, up to a scalar factor, with the projection operator $P_{\lambda+\varepsilon e_i}$.
\begin{proposition}
\label{prop:25042720}
Suppose that $\Pi \in \mathcal{M}(G)$ has
$\mathfrak{Z}(G)$-infinitesimal character $\lambda$,
and assume that all generalized eigenspaces of
$\mathfrak{Z}(G)$ in $\Pi \otimes F$
are genuine eigenspaces.
Let $\varepsilon \in \{+,-\}$ and $1 \le i \le r$.
Then the following identity holds in
$\operatorname{End}(\Pi \otimes F)$:
\[
   \phi_{\lambda}^{\lambda + \varepsilon e_i}
   =
   \varphi_{i,\varepsilon}(\lambda)\,
   P_{\lambda + \varepsilon e_i}.
\]
\end{proposition}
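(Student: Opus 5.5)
The strategy is to decompose $\Pi\otimes F$ into its $\mathfrak{Z}(G)$-primary components and evaluate the polynomial $\phi_\lambda^{\lambda+\varepsilon e_i}$ in $c_G$ on each of them separately.

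\textbf{Step 1: the primary components.} Since $\Pi$ has $\mathfrak{Z}(G)$-infinitesimal character $\lambda$ and $F$ is finite-dimensional with weights $\Delta(F)$, the classical Kostant-type argument shows that $\Pi\otimes F$ is annihilated by $\prod_{\mu\in\Delta(F)}(z-\chi_{\lambda+\mu}(z))^{m}$ for every $z\in\mathfrak{Z}(G)$ and suitable $m$. Hence only the characters $\lambda+\mu$ with $\mu\in\Delta(F)$ occur, and
\[
\Pi\otimes F=\bigoplus_{\tau} P_\tau(\Pi\otimes F),
\]
where $\tau$ runs over the distinct $W_G$-orbits among $\{\lambda+\mu:\mu\in\Delta(F)\}$. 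If $\mathfrak{Z}(\mathfrak g)$ acts nonsemisimply but $\mathfrak{Z}(G)$-invariance is what matters, the same annihilation statement holds for $\mathfrak{Z}(G)\subset\mathfrak{Z}(\mathfrak g)$. By hypothesis each summand is a genuine eigenspace, so $c_G$ acts on $P_{\lambda+\mu}$ by the scalar $\|\lambda+\mu\|^2-\|\rho_{\mathfrak g}\|^2$.

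\textbf{Step 2: the other components are killed.} Consider a summand $P_{\lambda+\mu}$ with $\mu\neq\varepsilon e_i$. The factor indexed by $\mu$ in \eqref{eqn:phi_ae} acts on it as $\tfrac12(\|\lambda+\mu\|^2-\|\lambda+\mu\|^2)=0$, so $\phi_\lambda^{\lambda+\varepsilon e_i}$ annihilates it. One caveat: several $\mu$ may give the same $W_G$-orbit $\tau$. In that case $P_\tau$ is a single summand, and one must check that the factor for any $\mu$ with $\lambda+\mu\in W_G(\lambda+\varepsilon e_i)$, $\mu\ne\varepsilon e_i$, still vanishes there. This holds because Casimir eigenvalues depend only on the orbit, so $\|\lambda+\mu\|^2=\|\lambda+\varepsilon e_i\|^2$. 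It follows that on $P_{\lambda+\varepsilon e_i}$ the factor $\mu$ gives $0$; the corresponding scalar $\varphi_{i,\varepsilon}(\lambda)$ then also vanishes, which is consistent with the identity. So $\phi_\lambda^{\lambda+\varepsilon e_i}$ vanishes on every component except possibly $P_{\lambda+\varepsilon e_i}$.

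\textbf{Step 3: the scalar on $P_{\lambda+\varepsilon e_i}$.} On $P_{\lambda+\varepsilon e_i}$ the element $\phi_\lambda^{\lambda+\varepsilon e_i}$ acts by
\[
\prod_{\mu\in\Delta(F)\setminus\{\varepsilon e_i\}}\tfrac12\bigl(\|\lambda+\varepsilon e_i\|^2-\|\lambda+\mu\|^2\bigr).
\]
The remaining work is a direct calculation:
\begin{itemize}
\item For $\mu=\pm e_j$ with $j\ne i$, the factor is $\tfrac12(2\varepsilon\lambda_i-2\delta\lambda_j)=\varepsilon\lambda_i-\delta\lambda_j$. The pair $\delta=\pm$ contributes $(\lambda_i-\lambda_j)(\lambda_i+\lambda_j)$, up to the sign convention that should match.
\item For $\mu=-\varepsilon e_i$, the factor is $\tfrac12\cdot 4\varepsilon\lambda_i=2\varepsilon\lambda_i$.
\item For $\mu=0$, which occurs only when $n$ is even, the factor is $\tfrac12(2\varepsilon\lambda_i+1)$, i.e.\ $\tfrac12\varepsilon(2\lambda_i+\varepsilon)$.
\end{itemize}
Multiplying these gives $2\varepsilon h_i(\lambda)$ for $n=2r-1$. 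For $n=2r$ it gives $\varepsilon^2\lambda_i(2\lambda_i+\varepsilon)\prod_{j\ne i}(\cdots)=\varphi_{i,\varepsilon}(\lambda)$.

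\textbf{Main obstacle.} The delicate points are the bookkeeping of coinciding orbits in Step 2, handled as above, and getting the signs and factors of $2$ to match \eqref{eqn:phi_ei_lmd} exactly in Step 3. In particular, the product $\prod_{\delta}(\varepsilon\lambda_i-\delta\lambda_j)$ equals $\lambda_i^2-\lambda_j^2$ regardless of $\varepsilon$, since $\varepsilon^2=1$. I would carry out this sign check carefully. Combining Steps 1–3 then gives $\phi_\lambda^{\lambda+\varepsilon e_i}=\varphi_{i,\varepsilon}(\lambda)P_{\lambda+\varepsilon e_i}$.
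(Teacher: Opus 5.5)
Your proposal is correct and is essentially the paper's proof: decompose $\Pi\otimes F$ into $\mathfrak{Z}(G)$-primary components $P_{\lambda+\tau}$ with $\tau\in\Delta(F)$, observe that the $\tau$-factor of $\phi_\lambda^{\lambda+\varepsilon e_i}$ kills $P_{\lambda+\tau}$ for $\tau\neq\varepsilon e_i$ because $c_G$ acts there by $\|\lambda+\tau\|^2-\|\rho_{\mathfrak g}\|^2$, and then evaluate the remaining product on $P_{\lambda+\varepsilon e_i}$, which your factor-by-factor computation correctly identifies with $\varphi_{i,\varepsilon}(\lambda)$ in both parities. Your extra remark about coinciding characters, where both sides vanish on the merged component, covers a point the paper passes over by indexing the decomposition directly by $\Delta(F)$.
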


\begin{proof}
The tensor product representation $\Pi \otimes F$
has finite length and therefore admits a primary decomposition:
\begin{equation}
\label{eqn:piF_primary}
   \Pi \otimes F
   =
   \bigoplus_{\tau \in \Delta(F)}
   P_{\lambda+\tau}(\Pi \otimes F)
   \quad
   \text{in $\mathcal{M}(G)$.}
\end{equation}
By assumption, each generalized eigenspace
$P_{\lambda+\tau}(\Pi \otimes F)$
is in fact an eigenspace of the Casimir element $c_G$,
with eigenvalue
$\|\lambda+\tau\|^2 - \|\rho_{\mathfrak g}\|^2$
for all $\tau \in \Delta(F)$.

Hence, the Casimir element $c_G$ acts on
$P_{\lambda+\tau}(\Pi \otimes F)$
by scalar multiplication with
$\|\lambda + \tau\|^2 - \|\rho_\mathfrak{g}\|^2$.
It then follows directly from the definition \eqref{eqn:phi_ae}
that $\phi_{\lambda}^{\lambda+\varepsilon e_i}$
vanishes on all $(\lambda+\tau)$-primary components
except when $\tau = \varepsilon e_i$.

To compute the action of
$\phi_{\lambda}^{\lambda+\varepsilon e_i}$
on the remaining component
$P_{\lambda+\varepsilon e_i}(\Pi \otimes F)$,
set
\[
   \Xi := \{-1,1\} \times \{1,2,\dots,r\}.
\]
Suppose that $\tau \in \Delta(F) \setminus \{\varepsilon e_i\}$.
Then either $\tau = b e_j$
for some $(b,j) \in \Xi \setminus \{(\varepsilon,i)\}$,
or $\tau = 0$ when $n$ is even.
It follows that
$\phi_{\lambda}^{\lambda+\varepsilon e_i}$
acts on
$P_{\lambda+\varepsilon e_i}(\Pi \otimes F)$
by the scalar
\[
\begin{cases}
   \displaystyle
   \prod_{(b,j) \in \Xi \setminus \{(\varepsilon,i)\}}
   (\varepsilon \lambda_i - b\lambda_j),
   & \text{if $n$ is odd}, \\[6pt]
   \displaystyle
   (\varepsilon \lambda_i + \tfrac{1}{2})
   \prod_{(b,j) \in \Xi \setminus \{(\varepsilon,i)\}}
   (\varepsilon \lambda_i - b\lambda_j),
   & \text{if $n$ is even}.
\end{cases}
\]
This scalar agrees with $\varphi_{i,\varepsilon}(\lambda)$
by definition \eqref{eqn:phi_ei_lmd}.
This completes the proof of
Proposition~\ref{prop:25042720}.
\end{proof}
The assumption in Proposition~\ref{prop:25042720}
that all generalized eigenspaces of $\mathfrak{Z}(G)$
in $\Pi \otimes F$ are genuine eigenspaces
can be relaxed as follows.
\begin{proposition}
\label{prop:25061805}
{\rm(1)}\enspace
Suppose that $\Pi \in \mathcal{M}(G)$ has
$\mathfrak{Z}(G)$-infinitesimal character $\lambda$.
Then, for any sufficiently large integer $N$,
the operator $(\phi_{\lambda}^{\lambda+\varepsilon e_i})^{N}$
maps $\Pi \otimes F$ into the primary component
$P_{\lambda+\varepsilon e_i}(\Pi \otimes F)$,
for every $\varepsilon \in \{+,-\}$ and
$1 \le i \le r$.

\medskip
\noindent
{\rm(2)}\enspace
Let $\varepsilon \in \{+,-\}$ and $1 \le i \le r$,
and suppose in addition that the Casimir element $c_G$
acts by scalar multiplication on the primary component
$P_{\lambda+\varepsilon e_i}(\Pi \otimes F)$.
Then, for any sufficiently large integer $N$,
the following identity holds in
$\operatorname{End}(\Pi \otimes F)$:
\[
   (\phi_{\lambda}^{\lambda+\varepsilon e_i})^{N}
   =
   \varphi_{i,\varepsilon}(\lambda)^{N}\,
   P_{\lambda+\varepsilon e_i}.
\]
\end{proposition}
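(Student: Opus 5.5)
The plan is to work entirely inside the finite-length module $\Pi\otimes F$, using the primary decomposition \eqref{eqn:piF_primary},
\[
   \Pi\otimes F=\bigoplus_{\tau\in\Delta(F)} P_{\lambda+\tau}(\Pi\otimes F).
\]
This decomposition is available without any semisimplicity hypothesis. Here we regard $\lambda+\tau$ modulo $W_G$; distinct $\tau$ may give the same $\mathfrak{Z}(G)$-character, and then the corresponding summands are grouped together. Each summand is $\mathfrak{Z}(G)$-stable, and on $P_{\lambda+\tau}(\Pi\otimes F)$ the Casimir element acts as
\[
   c_G=\|\lambda+\tau\|^2-\|\rho_{\mathfrak g}\|^2+\text{(nilpotent)}.
\]
This holds because $c_G$ acts on a generalized $\chi_{\lambda+\tau}$-eigenspace with generalized eigenvalue given by its Harish-Chandra image at $\lambda+\tau$. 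Since $\Pi\otimes F$ has finite length, each primary component has finite length, so the nilpotent part $n_\tau:=c_G-(\|\lambda+\tau\|^2-\|\rho_{\mathfrak g}\|^2)$ satisfies $n_\tau^{k}=0$ for some $k$ uniform over the finitely many components. This uniformity comes from the definition \eqref{eqn:pr_gen_inf} together with the finite-length property, since the increasing union stabilizes.

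For (1), I consider the component $P_{\lambda+\tau}$ with $\lambda+\tau\not\equiv\lambda+\varepsilon e_i$ modulo $W_G$. Then $\tau\ne\varepsilon e_i$, so the factor $\tfrac12(c_G-\|\lambda+\tau\|^2+\|\rho_{\mathfrak g}\|^2)$ occurs in the product \eqref{eqn:phi_ae}, and on this component that factor equals $\tfrac12 n_\tau$, which is nilpotent. All factors of $\phi_\lambda^{\lambda+\varepsilon e_i}$ lie in the commutative algebra generated by $c_G$. Hence $(\phi_\lambda^{\lambda+\varepsilon e_i})^N$ contains $(\tfrac12 n_\tau)^N$ as a factor and vanishes on that component once $N\ge k$. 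It follows that the image of $(\phi_\lambda^{\lambda+\varepsilon e_i})^N$ lies in the sum of the components whose character is $W_G$-conjugate to $\lambda+\varepsilon e_i$, and this sum is exactly $P_{\lambda+\varepsilon e_i}(\Pi\otimes F)$, since primary components are indexed by characters rather than by $\tau$.

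The one delicate point is whether some $\tau\ne\varepsilon e_i$ can give a character $W_G$-conjugate to $\lambda+\varepsilon e_i$. In that case the corresponding factor is still present, but its constant term $\|\lambda+\tau\|^2-\|\lambda+\varepsilon e_i\|^2$ vanishes, because conjugate weights have equal norms. So that factor is nilpotent on $P_{\lambda+\varepsilon e_i}$, which is harmless for (1).

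For (2), I assume $c_G$ acts on $P_{\lambda+\varepsilon e_i}(\Pi\otimes F)$ by the scalar $\|\lambda+\varepsilon e_i\|^2-\|\rho_{\mathfrak g}\|^2$. Then $\phi_\lambda^{\lambda+\varepsilon e_i}$ acts there by the scalar computed in the proof of Proposition~\ref{prop:25042720}. That computation is
\[
   \tfrac12\bigl(\|\lambda+\varepsilon e_i\|^2-\|\lambda+b e_j\|^2\bigr)=\varepsilon\lambda_i-b\lambda_j \quad (\text{for } j\ne i),
\]
with the analogous values for $j=i$, $b=-\varepsilon$ and for $\tau=0$. The product of these values is $\varphi_{i,\varepsilon}(\lambda)$. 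Therefore $(\phi_\lambda^{\lambda+\varepsilon e_i})^N=\varphi_{i,\varepsilon}(\lambda)^N$ on this component, while by (1) it vanishes on the complementary components for $N$ large. Combining the two gives
\[
   (\phi_\lambda^{\lambda+\varepsilon e_i})^N=\varphi_{i,\varepsilon}(\lambda)^N P_{\lambda+\varepsilon e_i}.
\]
The degenerate case from the previous paragraph is consistent with this: it arises only when $\varphi_{i,\varepsilon}(\lambda)=0$, and then both sides vanish.

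I expect the main obstacle to be the bookkeeping in (1): keeping distinct $\tau$ separate from $W_G$-equivalent characters, and justifying the uniform nilpotency exponent from finite length. The scalar computation in (2) is routine, since it is the one already carried out for Proposition~\ref{prop:25042720}.
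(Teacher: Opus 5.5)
Your proposal is correct and follows the paper's own argument. You use the primary decomposition of the finite-length module $\Pi\otimes F$, take a uniform exponent from the stabilization of the generalized eigenspaces, note that the factor of $\phi_\lambda^{\lambda+\varepsilon e_i}$ attached to each $\tau\ne\varepsilon e_i$ is nilpotent on $P_{\lambda+\tau}(\Pi\otimes F)$, and read off the scalar $\varphi_{i,\varepsilon}(\lambda)^N$ on the remaining component as in Proposition~\ref{prop:25042720}. Your extra bookkeeping is also correct: if some $\lambda+\tau$ with $\tau\ne\varepsilon e_i$ is $W_G$-conjugate to $\lambda+\varepsilon e_i$, then $\varphi_{i,\varepsilon}(\lambda)=0$ and both sides vanish, a point the paper's decomposition \eqref{eqn:piF_primary} passes over silently.
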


\begin{remark}
If $\Pi \in \operatorname{Irr}(G)$,
one may take $N = n+1$ in
Proposition~\ref{prop:25061805}.
\end{remark}

A central feature of Proposition~\ref{prop:25061805}
is that, despite the higher rank of the group $G$,
the Casimir operator alone suffices to separate the primary components
of $\Pi \otimes F$.

\begin{proof}[Proof of Proposition~\ref{prop:25061805}]
Since $\Pi \otimes F$ is a $G$-module of finite length,
there exists $\ell \in \mathbb{N}_{+}$
such that, for every $\tau \in \Delta(F)$,
the primary component (see \eqref{eqn:pr_gen_inf})
associated with the
$\mathfrak{Z}(G)$-infinitesimal character $\lambda+\tau$
is given by
\[
   P_{\lambda+\tau}(\Pi \otimes F)
   =
   \bigcup_{k=1}^{\ell}
   \bigcap_{z \in \mathfrak{Z}(G)}
   \operatorname{Ker}\bigl(z-\chi_{\lambda+\tau}(z)\bigr)^k.
\]

It follows that, for any integer $N \ge \ell$,
the operator
\[
   (\phi_{\lambda}^{\lambda+\varepsilon e_i})^N
\]
annihilates each summand
$P_{\lambda+\tau}(\Pi \otimes F)$
whenever $\tau = b e_j$
with $(b,j) \in \Xi \setminus \{(\varepsilon,i)\}$,
or $\tau = 0$ when $n$ is even,
as already shown in the proof of
Proposition~\ref{prop:25042720}.

On the other hand, by assumption,
the Casimir element $c_G$ acts by scalar multiplication
on the remaining primary component
\[
   P_{\lambda+\varepsilon e_i}(\Pi \otimes F)
\]
in the primary decomposition \eqref{eqn:piF_primary}.
Therefore, the operator
$(\phi_{\lambda}^{\lambda+\varepsilon e_i})^N$
acts on
$P_{\lambda+\varepsilon e_i}(\Pi \otimes F)$
by scalar multiplication by
$\varphi_{i,\varepsilon}(\lambda)^N$,
as computed in the proof of
Proposition~\ref{prop:25042720}.
\end{proof}

\begin{corollary}
\label{cor:250624}
Suppose that $\Pi \in \mathcal{M}(G)$ has a nonsingular
$\mathfrak{Z}(G)$-infinitesimal character $\lambda$
(see \eqref{eqn:lmd_reg}).
Let $\varepsilon \in \{+,-\}$ and $1 \le i \le r$,
and suppose in addition that the Casimir element $c_G$
acts by scalar multiplication on the primary component
$P_{\lambda+\varepsilon e_i}(\Pi \otimes F)$.
Assume moreover that
$2\lambda_i+\varepsilon \ne 0$ when $n$ is even.
Then the projection operator
$P_{\lambda+\varepsilon e_i}$
can be realized as a scalar multiple of
$(\phi_{\lambda}^{\lambda+\varepsilon e_i})^N$
for sufficiently large $N$.

In particular,
\[
   (\phi_{\lambda}^{\lambda+\varepsilon e_i})^N
   (\Pi \otimes \mathbb{C}^{n+1})
   \simeq
   \psi_{\lambda}^{\lambda+\varepsilon e_i}(\Pi),
\]
where $\psi_{\lambda}^{\lambda+\varepsilon e_i}$
denotes the translation functor defined in
\eqref{eqn:tr_functor}.
\end{corollary}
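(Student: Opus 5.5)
The statement to prove is Corollary~\ref{cor:250624}. The plan is to deduce it directly from Proposition~\ref{prop:25061805}(2), together with the non-vanishing of the scalar $\varphi_{i,\varepsilon}(\lambda)$.

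First, I will apply Proposition~\ref{prop:25061805}(2). Its hypotheses are exactly those of the corollary: $\Pi$ has $\mathfrak{Z}(G)$-infinitesimal character $\lambda$, and $c_G$ acts by a scalar on $P_{\lambda+\varepsilon e_i}(\Pi\otimes F)$. It gives, for all sufficiently large $N$, the identity
\[
   (\phi_{\lambda}^{\lambda+\varepsilon e_i})^{N}
   =
   \varphi_{i,\varepsilon}(\lambda)^{N}\, P_{\lambda+\varepsilon e_i}
   \quad\text{in }\operatorname{End}(\Pi\otimes F).
\]

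Second, I will check that $\varphi_{i,\varepsilon}(\lambda)\neq 0$. Recall that $\varphi_{i,\varepsilon}=2\varepsilon h_i$ for $n=2r-1$, and $\varphi_{i,\varepsilon}=(2\lambda_i+\varepsilon)h_i$ for $n=2r$. Nonsingularity \eqref{eqn:lmd_reg} gives $\lambda_i\neq0$ and $\lambda_i\pm\lambda_j\neq0$ for $j\neq i$, so $h_i(\lambda)\neq 0$. For even $n$, the extra factor $2\lambda_i+\varepsilon$ is nonzero by the hypothesis $2\lambda_i+\varepsilon\neq 0$. Dividing by $\varphi_{i,\varepsilon}(\lambda)^N$ then yields
\[
   P_{\lambda+\varepsilon e_i}=\varphi_{i,\varepsilon}(\lambda)^{-N}(\phi_{\lambda}^{\lambda+\varepsilon e_i})^{N},
\]
which is the first assertion.

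Third, I will identify the image. Since a nonzero scalar multiple of an operator has the same image, $(\phi_{\lambda}^{\lambda+\varepsilon e_i})^N(\Pi\otimes\mathbb{C}^{n+1})$ equals the image of $P_{\lambda+\varepsilon e_i}$, namely $P_{\lambda+\varepsilon e_i}(\Pi\otimes F)$. Because $\Pi$ already has infinitesimal character $\lambda$, we have $P_\lambda(\Pi)=\Pi$. Definition~\eqref{eqn:tr_functor} then gives $\psi_\lambda^{\lambda+\varepsilon e_i}(\Pi)=P_{\lambda+\varepsilon e_i}(\Pi\otimes F)$. Moreover, $(\phi_\lambda^{\lambda+\varepsilon e_i})^N$ is an element of $\mathfrak{Z}(G)$ acting on $\Pi\otimes F$, hence a $G$-endomorphism. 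Its image is therefore a $G$-submodule, and the isomorphism holds in $\mathcal{M}(G)$.

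There is no serious obstacle; the only point needing care is the non-vanishing of $\varphi_{i,\varepsilon}(\lambda)$ in the even case. This is where the convention that $\mathcal{E}\subset\Delta_G$ is used, since it ensures $\lambda_i\neq0$ in the odd case as well.
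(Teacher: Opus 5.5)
Your proposal is correct and follows the paper's argument: the paper likewise observes that nonsingularity of $\lambda$, together with $2\lambda_i+\varepsilon\neq 0$ when $n$ is even, gives $\varphi_{i,\varepsilon}(\lambda)\neq 0$, and then reads off both assertions from Proposition~\ref{prop:25061805}(2). Your extra remarks on identifying the image with $\psi_{\lambda}^{\lambda+\varepsilon e_i}(\Pi)$, and on the role of $\mathcal{E}\subset\Delta_G$ in ensuring $\lambda_i\neq 0$ for odd $n$, spell out details the paper leaves implicit.
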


\begin{proof}
Under the assumptions, we have
$\varphi_{i,\varepsilon}(\lambda) \neq 0$.
The assertion follows immediately from
Proposition~\ref{prop:25061805}(2).
\end{proof}

\subsection{Reduction Step: Proof of the Theorems}
\label{subsec:pf_thm}
The proof of Theorem~\ref{thm:25061808} will be given in
Section~\ref{sec:pf_main}.
In the present subsection, we establish the first implication in the
following chain; the remaining implications have already been proved
in the preceding sections:
\[
  \text{Theorem~\ref{thm:25061808}}
  \;\Longrightarrow\;
  \text{Theorems~\ref{thm:25061911a} and~\ref{thm:25061911b}}
  \;\Longrightarrow\;
  \text{Theorem~\ref{thm:25051110}}
  \;\Longrightarrow\;
  \text{Theorem~\ref{thm:250515}} .
\]
\begin{proof}
[Proof of Theorems~\ref{thm:25061911a} and~\ref{thm:25061911b}
assuming Theorem~\ref{thm:25061808}]
The regularity assumption on $\lambda$ guarantees that the polynomial
$h_i(\lambda)$ defined in \eqref{eqn:hi_lmd} does not vanish.
Hence, the nonvanishing of
$\varphi_{i,\varepsilon}(\lambda)$
follows from its defining equation \eqref{eqn:phi_ei_lmd},
together with the additional assumption that
$2\lambda_i + \varepsilon \neq 0$ when $n$ is even.

By Proposition~\ref{prop:25061805}(1),
the operator
$(\phi_{\lambda}^{\lambda+\varepsilon e_i})^{N-1}$
maps $\Pi \otimes F$ into the primary component
$P_{\lambda+\varepsilon e_i}(\Pi \otimes F)$
for all sufficiently large $N$.
By assumption, the Casimir element $c_G$ acts on this primary component
by scalar multiplication.
Applying Proposition~\ref{prop:25061805}(2), we obtain
\[
   (\phi_{\lambda}^{\lambda+\varepsilon e_i})^N
   =
   \varphi_{i,\varepsilon}(\lambda)^N
   P_{\lambda+\varepsilon e_i}
   \quad \text{in $\operatorname{End}(\Pi \otimes F)$.}
\]

Substituting this identity into the conclusion of
Theorem~\ref{thm:25061808}, we obtain
\begin{align*}
  \varphi_{i,\varepsilon}(\lambda)^N
  (T \otimes \operatorname{pr}_{F \to F'})
  &\circ
  P_{\lambda+\varepsilon e_i}
  \circ (\operatorname{id}_\Pi \otimes \iota_{F'\to F}) \\
  &=
   (T \otimes \operatorname{pr}_{F \to F'})
   \circ
   (\phi_{\lambda}^{\lambda+\varepsilon e_i})^N
   \circ (\operatorname{id}_\Pi \otimes \iota_{F'\to F}) \\
  &=
   \varphi_{i,\varepsilon}(\lambda)^{N-1}
   g_{i,\varepsilon}(\lambda,\nu)\,
   T .
\end{align*}
Since $\varphi_{i,\varepsilon}(\lambda) \neq 0$,
this yields the desired formula
\[
   (T \otimes \operatorname{pr}_{F \to F'})
   \circ
   P_{\lambda+\varepsilon e_i}
   \circ (\operatorname{id}_\Pi \otimes \iota_{F'\to F})
   =
   \frac{g_{i,\varepsilon}(\lambda,\nu)}
        {\varphi_{i,\varepsilon}(\lambda)}
   \, T .
\]
\end{proof}

\section{Construction of $G'$-Invariant Elements
in $U(\mathfrak{g})$}

The proof of Theorem~\ref{thm:25061808} is completed in the following sections.
In the present section, we carry out the first step by constructing explicit
$G'$-invariant elements in the universal enveloping algebra $U(\mathfrak g)$
that arise from the diagonal action of powers of the Casimir element.

For a branching problem $G \supset G'$, the algebra $U(\mathfrak g)^{G'}$
acts naturally on the space of symmetry breaking operators.
To establish the universal scalar identities required in
Theorem~\ref{thm:25061808}, 
one is therefore led to construct
explicit $G'$-invariant elements in $U(\mathfrak g)$.
Such invariant elements are constructed in
Proposition~\ref{prop:24120139}.

\subsection{Generalities: $U(\mathfrak{g})^{\mathfrak{g}'}$
and $U(\mathfrak{g})^{G'}$}

Let $\mathfrak g_\mathbb C=\mathfrak o(n+1,\mathbb C)$ and
$\mathfrak g'_\mathbb C=\mathfrak o(n,\mathbb C)$.
As throughout the paper, all universal enveloping algebras are taken over
$\mathbb C$, and we write $U(\mathfrak g)$ for $U(\mathfrak g_\mathbb C)$.

We denote by $U(\mathfrak g)^{\mathfrak g'}$ the subalgebra of
$\mathfrak g'_\mathbb C$-invariant elements of $U(\mathfrak g)$, and by
$U(\mathfrak g)^{G'}$ the subalgebra of $G'_\mathbb C=O(n,\mathbb C)$-invariant elements.
This definition ensures that $U(\mathfrak g)^{G'}$ is independent of the choice
of real form or covering of the subgroup $G'$.

Accordingly, one has the natural inclusions
\[
   \mathfrak{Z}(G)
   \subset
   \mathfrak{Z}(\mathfrak{g}),
   \qquad
   \mathfrak{Z}(G')
   \subset
   \mathfrak{Z}(\mathfrak{g}'),
   \qquad
   U(\mathfrak{g})^{G'}
   \subset
   U(\mathfrak{g})^{\mathfrak{g}'} .
\]

The algebra $U(\mathfrak g)$ carries its standard increasing filtration
$\{U_m(\mathfrak g)\}_{m\ge0}$, where $U_m(\mathfrak g)$ is spanned by products
of at most $m$ elements of $\mathfrak g$.
The multiplication map induces, for any $a,b\in\mathbb N$, a morphism
\begin{equation}\label{eqn:Ug_mult}
   U_a(\mathfrak g)\otimes U_b(\mathfrak g)
   \longrightarrow
   U_{a+b}(\mathfrak g),
\end{equation}
which is compatible with this filtration.
All degree statements in this section are understood with respect to this
filtration.

This filtered structure will be essential in controlling the degree of
explicit $G'_\mathbb C$-invariant elements constructed below,
and in establishing the polynomial degree estimates that appear
in the universal scalar identities proved in later sections.

The following lemma summarizes the structure of $U(\mathfrak g)^{G'}$
and refines classical results of Cooper~\cite{C75} to the
group-invariant setting, including the disconnected case.

\begin{lemma}
\label{lem:25050506}
The multiplication map \eqref{eqn:Ug_mult} induces the following
commutative diagram, in which the vertical maps are isomorphisms:
\begin{equation}
\label{eqn:UUU_zUU}
\xymatrix@!C=36pt{
   \mathfrak{Z}(G) \otimes \mathfrak{Z}(G')
   \ar[d]^{\rotatebox[origin=c]{90}{$\sim$}}
   & \subset \hskip -1pc &
   \mathfrak{Z}(\mathfrak{g}) \otimes \mathfrak{Z}(\mathfrak{g}')
   \ar[d]_{\varpi}^{\rotatebox[origin=c]{90}{$\sim$}}
   \\
   U(\mathfrak{g})^{G'}
   & \subset \hskip -1pc &
   U(\mathfrak{g})^{\mathfrak{g}'}
}
\end{equation}

Moreover, these isomorphisms are compatible with the filtrations induced from the standard filtration of $U(\mathfrak g)$.
More precisely, for any $a,b \in \mathbb{N}$, the restriction of the
multiplication map induces an isomorphism
\[
   \mathfrak{Z}_a(\mathfrak{g}) \otimes \mathfrak{Z}_b(\mathfrak{g}')
   \xrightarrow{\;\sim\;}
   U(\mathfrak{g})^{\mathfrak{g}'}_{a+b}.
\]
\end{lemma}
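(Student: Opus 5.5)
The plan is to deduce Lemma~\ref{lem:25050506} from the known $\mathfrak g'$-invariant statement, due to Cooper~\cite{C75}, and then take invariants under the finite component group. The starting fact is that for the pair $(\mathfrak o(n+1),\mathfrak o(n))$ the multiplication map $\varpi\colon\mathfrak Z(\mathfrak g)\otimes\mathfrak Z(\mathfrak g')\to U(\mathfrak g)^{\mathfrak g'}$ is an isomorphism. I will prove the filtered form at the graded level. By the symmetrization map, $\operatorname{gr}U(\mathfrak g)^{\mathfrak g'}\simeq S(\mathfrak g)^{\mathfrak g'}$. Writing $\mathfrak g=\mathfrak g'\oplus\mathbb C^n$, this becomes $(S(\mathfrak g')\otimes S(\mathbb C^n))^{O(n)}$ when we work with connected $SO(n,\mathbb C)$-invariants, with a small adjustment for $n$ even. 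Classical invariant theory (Knop; Kostant--Rallis-type freeness) says that $S(\mathfrak g)^{\mathfrak g'}$ is a polynomial ring generated by the generators of $S(\mathfrak g)^{\mathfrak g}$ together with those of $S(\mathfrak g')^{\mathfrak g'}$, and that these are algebraically independent.

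Given this, $\operatorname{gr}\varpi$ is an isomorphism of graded algebras. Because the elements of $\mathfrak Z_a(\mathfrak g)$ and $\mathfrak Z_b(\mathfrak g')$ have symbols in the corresponding degrees, a standard filtration argument, by induction on degree, lifts this to $\mathfrak Z_a(\mathfrak g)\otimes\mathfrak Z_b(\mathfrak g')\simeq U(\mathfrak g)^{\mathfrak g'}_{a+b}$, summed over $a+b$ equal to the total degree. Here I would take the statement to mean the sum over all such pairs $(a,b)$.

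For the left column I take invariants under the finite group $O(n,\mathbb C)/SO(n,\mathbb C)=\{1,\sigma\}$, with $\sigma$ a reflection in $O(n)$. Averaging over this group is exact, so $U(\mathfrak g)^{G'}$ is the $\sigma$-fixed part of $U(\mathfrak g)^{\mathfrak g'}$. Since $\varpi$ is $\sigma$-equivariant, it suffices to compute the $\sigma$-fixed part of $\mathfrak Z(\mathfrak g)\otimes\mathfrak Z(\mathfrak g')$. The reflection $\sigma\in O(n)\subset O(n+1)$ acts on $\mathfrak Z(\mathfrak g)$ as the outer automorphism when $n+1$ is even, and trivially when $n+1$ is odd. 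On $\mathfrak Z(\mathfrak g')$ the situation is reversed: $\sigma$ is nontrivial exactly when $n$ is even. In each parity exactly one factor carries the nontrivial action, namely the Pfaffian generator of degree $r$ or $s$, which changes sign while all other generators are fixed. The fixed part of (polynomial ring) $\otimes$ (polynomial ring) is then the fixed part of the moving factor tensored with the untouched factor, which is precisely $\mathfrak Z(G)\otimes\mathfrak Z(G')$, since these are the $O$-invariant subalgebras. This gives the left vertical isomorphism and commutativity of the diagram, with filtration compatibility inherited from the right column.

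I expect the main obstacle to be the graded freeness statement: showing that $S(\mathfrak g)^{\mathfrak g'}$ is generated by the two centers with no relations. If I cannot simply cite Cooper or Knop, I would argue by a dimension count. The generic $SO(n)$-orbits on $\mathfrak o(n+1)$ have codimension $r+s$, so the transcendence degree of the invariant field is $r+s$. The $r+s$ generators are algebraically independent, which can be checked via a Jacobian computation at a generic point of a Cartan of $\mathfrak g'$ plus a vector. Normality, together with a codimension-two argument (Igusa's criterion), then shows that they generate the whole invariant ring.
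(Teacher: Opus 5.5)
Your proposal is correct. For the two vertical isomorphisms it follows the paper's argument. You cite Cooper's theorem for $\varpi$ and then take fixed points of the reflection $g_n=\operatorname{diag}(1,\dots,1,-1)$. Exactly one center is moved: $\mathfrak Z(\mathfrak g)$ for $n$ odd, $\mathfrak Z(\mathfrak g')$ for $n$ even. The Pfaffian bookkeeping is not needed, because $g_n$ represents the nontrivial component in both $O(n+1,\mathbb C)$ and $O(n,\mathbb C)$, which gives $\mathfrak Z(\mathfrak g)^{\operatorname{Ad}(g_n)}=\mathfrak Z(G)$ and $\mathfrak Z(\mathfrak g')^{\operatorname{Ad}(g_n)}=\mathfrak Z(G')$ directly.

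The real difference from the paper is the filtered statement. The paper says it follows immediately from \eqref{eqn:Ug_mult}. But multiplicativity only gives the inclusion $\varpi\bigl(\sum_{a+b=m}\mathfrak Z_a(\mathfrak g)\otimes\mathfrak Z_b(\mathfrak g')\bigr)\subset U(\mathfrak g)^{\mathfrak g'}_m$. The reverse inclusion is a strictness statement: every invariant of filtration degree $m$ is a sum of products $zz'$ with $\deg z+\deg z'\le m$. It needs graded input, namely that $S(\mathfrak g)^{\mathfrak g'}$ is freely generated by $S(\mathfrak g)^{\mathfrak g}$ and $S(\mathfrak g')^{\mathfrak g'}$, together with $\operatorname{gr}$ commuting with invariants by complete reducibility. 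That is exactly what your passage to symbols provides, and this reverse inclusion is what Proposition~\ref{prop:241201} uses to get $\deg b^{(\ell)}\le \ell$.

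Your reading of the statement as a sum over $a+b=m$ is also necessary. For a fixed pair $(a,b)$ the map is not onto: for $n\ge 2$, $c_{G'}\in U(\mathfrak g)^{\mathfrak g'}_2$ is not in the image of $\mathfrak Z_2(\mathfrak g)\otimes\mathfrak Z_0(\mathfrak g')$.

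There is one small slip. The $\mathfrak g'$-invariants in $S(\mathfrak g')\otimes S(\mathbb C^n)$ are the $SO(n,\mathbb C)$-invariants, not the $O(n)$-invariants; the component group only enters in your second step. Your Igusa-criterion fallback is only a sketch, but you do not need it if you cite the classical graded freeness for this pair.
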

\begin{proof}
The assertion for
$U(\mathfrak g)^{\mathfrak{g}'}$ is classical; see \cite{C75}.
The only additional point concerns the disconnectedness of the group
$G'_\mathbb C=O(n,\mathbb C)$.

We observe that the multiplication map \eqref{eqn:Ug_mult}
intertwines the diagonal action of $G_{\mathbb C}$ on
$U(\mathfrak{g}) \otimes U(\mathfrak{g})$
with the adjoint action of $G_{\mathbb C}$ on $U(\mathfrak{g})$.

Define
\begin{equation}
\label{eqn:gn}
   g_n := \operatorname{diag}(1,1,\dots,1,-1)
   \in G_{\mathbb{C}} = O(n+1,\mathbb C).
\end{equation}
Since $U(\mathfrak g)^{G'}$ coincides with the $\mathrm{Ad}(g_n)$-fixed subalgebra
of $U(\mathfrak g)^{\mathfrak g'}$, the claim follows by taking invariants.

Since $\operatorname{Ad}(g_n)$ preserves the subalgebra
$\mathfrak{g}' \subset \mathfrak{g}$, it stabilizes the
subalgebras
$\mathfrak{Z}(\mathfrak{g})$,
$\mathfrak{Z}(\mathfrak{g}')$, and
$U(\mathfrak{g})^{\mathfrak{g}'}
\subset U(\mathfrak{g})$.
Thus, the bijection
\begin{equation}
\label{eqn:zzU}
   \varpi \colon
   \mathfrak{Z}(\mathfrak{g}) \otimes \mathfrak{Z}(\mathfrak{g}')
   \longrightarrow
   U(\mathfrak{g})^{\mathfrak{g}'}
\end{equation}
is $\operatorname{Ad}(g_n)$-equivariant.

We now distinguish the argument according to the parity of $n$.

If $n$ is even, then $\mathfrak{Z}(G)=\mathfrak{Z}(\mathfrak{g})$.
Hence, the bijection $\varpi$ in \eqref{eqn:zzU} induces an isomorphism
\begin{equation}
\label{eqn:UzU}
   \mathfrak{Z}(G)
   \otimes
   \mathfrak{Z}(\mathfrak{g}')
   \xrightarrow{\;\sim\;}
   U(\mathfrak{g})^{\mathfrak{g}'}.
\end{equation}
Moreover, $\operatorname{Ad}(g_n)$ acts trivially on
$\mathfrak{Z}(G)$.
Taking $\operatorname{Ad}(g_n)$-fixed vectors,
the isomorphism \eqref{eqn:UzU} yields
the left-hand vertical bijection in
\eqref{eqn:UUU_zUU}.

If $n$ is odd, then $\mathfrak{Z}(G')=\mathfrak{Z}(\mathfrak{g}')$.
In this case, $\varpi$ induces an isomorphism
\begin{equation}
\label{eqn:zUU}
   \mathfrak{Z}(\mathfrak{g}) \otimes \mathfrak{Z}(G')
   \xrightarrow{\;\sim\;}
   U(\mathfrak{g})^{G'}.
\end{equation}
Since $\operatorname{Ad}(g_n)$ acts trivially on $\mathfrak{Z}(G')$,
taking $\operatorname{Ad}(g_n)$-invariants in
\eqref{eqn:zUU} yields
the left-hand vertical bijection in
\eqref{eqn:UUU_zUU}.

The compatibility with the filtrations follows immediately from the
definition of the standard filtration and the multiplicativity property \eqref{eqn:Ug_mult}.
\end{proof}
Lemma~\ref{lem:25050506} yields the following generalization of the
Harish-Chandra isomorphism.
Let
\begin{equation}
\label{eqn:WG}
W_G = \mathfrak S_r \ltimes (\mathbb{Z}/2\mathbb{Z})^r,
\qquad
W_{G'} = \mathfrak S_s \ltimes (\mathbb{Z}/2\mathbb{Z})^s,
\end{equation}
where $r=\left\lfloor \frac{n+1}{2}\right\rfloor$ and
$s=\left\lfloor \frac{n}{2}\right\rfloor$ are the ranks of $G$ and $G'$,
respectively.

\begin{lemma}
\label{lem:zzU}
For any pair $(\lambda,\nu)$ belonging either to
\[
   \operatorname{Hom}_{\mathbb{C}\text{-alg}}
   \bigl(\mathfrak{Z}(\mathfrak g), \mathbb C\bigr)
   \times
   \operatorname{Hom}_{\mathbb{C}\text{-alg}}
   \bigl(\mathfrak{Z}(\mathfrak g'), \mathbb C\bigr)
   \;\simeq\;
   \mathbb C^r/W_{\mathfrak g} \times \mathbb C^s/W_{\mathfrak g'},
\]
or to
\[
   \operatorname{Hom}_{\mathbb{C}\text{-alg}}
   \bigl(\mathfrak{Z}(G), \mathbb C\bigr)
   \times
   \operatorname{Hom}_{\mathbb{C}\text{-alg}}
   \bigl(\mathfrak{Z}(G'), \mathbb C\bigr)
   \;\simeq\;
   \mathbb C^r/W_G \times \mathbb C^s/W_{G'},
\]
there exists a unique algebra homomorphism
\[
   \chi_{\lambda,\nu} \colon
   U(\mathfrak{g})^{\mathfrak{g}'}
   \to \mathbb{C},
   \qquad\text{or respectively}\qquad
   \chi_{\lambda,\nu} \colon
   U(\mathfrak{g})^{G'}
   \to \mathbb{C},
\]
such that the following diagram is commutative, where the left-hand
(resp.\ right-hand) square corresponds to the Lie algebra
(resp.\ group) invariant case:
\[
\xymatrix{
   {\mathfrak{Z}}(\mathfrak{g}) \otimes {\mathfrak{Z}}(\mathfrak g')
     \ar[d]^{\rotatebox[origin=c]{90}{$\sim$}}
     \ar[rd]
     & & 
   \mathfrak{Z}(G) \otimes \mathfrak{Z}(G')
     \ar[d]^{\rotatebox[origin=c]{90}{$\sim$}}
     \ar[rd]
\\
   U(\mathfrak{g})^{\mathfrak{g}'}
     \ar[r]_(.6){\chi_{\lambda,\nu}}
     & \mathbb{C}
     &
   U(\mathfrak{g})^{G'}
     \ar[r]_(.6){\chi_{\lambda,\nu}}
     & \mathbb{C}
}
\]
\end{lemma}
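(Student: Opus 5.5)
Since the vertical arrows in each square are isomorphisms (Lemma~\ref{lem:25050506}), the homomorphism $\chi_{\lambda,\nu}$ has to be defined by transporting a character of the tensor product along this isomorphism. Concretely, in the Lie algebra case we take the characters $\chi_\lambda\colon\mathfrak Z(\mathfrak g)\to\mathbb C$ and $\chi_\nu\colon\mathfrak Z(\mathfrak g')\to\mathbb C$ given by the Harish-Chandra isomorphism. Since both algebras are commutative, the map $z\otimes z'\mapsto\chi_\lambda(z)\chi_\nu(z')$ is an algebra homomorphism
\[
\chi_\lambda\otimes\chi_\nu\colon \mathfrak Z(\mathfrak g)\otimes\mathfrak Z(\mathfrak g')\to\mathbb C .
\]
We then set $\chi_{\lambda,\nu}:=(\chi_\lambda\otimes\chi_\nu)\circ\varpi^{-1}$. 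In the group case we do the same with $\mathfrak Z(G)\otimes\mathfrak Z(G')$ and the left vertical isomorphism of \eqref{eqn:UUU_zUU}, using the Harish-Chandra bijection \eqref{eqn:HCisom} for $G$ and its analogue for $G'$.

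The key point to check is that $\varpi$, and its group-invariant restriction, is an \emph{algebra} isomorphism and not merely a linear bijection; only then is the transported map multiplicative. This holds because $\varpi(z\otimes z')=zz'$. Indeed, $\mathfrak Z(\mathfrak g)$ is central in $U(\mathfrak g)$, so it commutes with $\mathfrak Z(\mathfrak g')$, and therefore
\[
\varpi\bigl((z_1\otimes z_1')(z_2\otimes z_2')\bigr)=z_1z_2z_1'z_2'=z_1z_1'z_2z_2' .
\]
Hence multiplication restricted to the tensor product of two commuting commutative subalgebras is an algebra homomorphism, and it is bijective by Lemma~\ref{lem:25050506}. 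It follows that $U(\mathfrak g)^{\mathfrak g'}$ and $U(\mathfrak g)^{G'}$ are commutative, and that $\chi_{\lambda,\nu}$ is an algebra homomorphism. Commutativity of the diagram holds by construction.

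Uniqueness follows because any homomorphism making the triangle commute is determined on the image of the vertical map, which is everything since that map is surjective. Well-definedness with respect to the $W$-orbits is inherited from the fact that $\chi_\lambda$ and $\chi_\nu$ depend only on $\lambda \bmod W$ and $\nu \bmod W$.

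The only point requiring mild care is the group case. There we must make sure that the isomorphism $\mathfrak Z(G)\otimes\mathfrak Z(G')\simeq U(\mathfrak g)^{G'}$ from Lemma~\ref{lem:25050506} is the restriction of $\varpi$, so that the multiplicativity argument above applies verbatim. We must also use the parameterization $\mathbb C^r/W_G\times\mathbb C^s/W_{G'}$ of the characters of $\mathfrak Z(G)$ and $\mathfrak Z(G')$, rather than that of the larger centers. Beyond this there is no real obstacle; the proof amounts to an unpacking of Lemma~\ref{lem:25050506}.
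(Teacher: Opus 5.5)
Your proposal is correct and follows the paper's approach. The paper states this lemma as an immediate consequence of Lemma~\ref{lem:25050506}, obtaining $\chi_{\lambda,\nu}$ by transporting $\chi_\lambda\otimes\chi_\nu$ along the vertical isomorphism; your explicit check that $\varpi(z\otimes z')=zz'$ is multiplicative (because $\mathfrak{Z}(\mathfrak{g})$ is central in $U(\mathfrak{g})$) is the detail the paper leaves implicit, and it is exactly what makes $\chi_{\lambda,\nu}$ an algebra homomorphism.
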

\begin{proposition}
\label{prop:on_ring_onto}
Let $G \supset G'$ be a real form of
$(O(n+1,\mathbb{C}), O(n,\mathbb{C}))$, or a finite covering thereof.
Let $\Pi \in \mathcal{M}(G)$ have
$\mathfrak{Z}(G)$-infinitesimal character $\lambda$,
and let $\pi \in \mathcal{M}(G')$ have
$\mathfrak{Z}(G')$-infinitesimal character $\nu$.
Then the algebra $U(\mathfrak g)^{G'}$ acts on the space
$\operatorname{Hom}_{G'}(\Pi|_{G'}, \pi)$
via the character $\chi_{\lambda,\nu}$.
More precisely, for any symmetry breaking operator
$T \colon \Pi \to \pi$, one has
\[
   T\bigl(d\Pi(D)u\bigr)
   =
   \chi_{\lambda,\nu}(D)\, T(u),
   \qquad
   u \in \Pi,\ D \in U(\mathfrak{g})^{G'}.
\]
\end{proposition}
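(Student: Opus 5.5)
By Lemma~\ref{lem:25050506}, the multiplication map gives an isomorphism $\mathfrak{Z}(G)\otimes\mathfrak{Z}(G')\xrightarrow{\sim}U(\mathfrak g)^{G'}$. It therefore suffices to verify the identity $T(d\Pi(D)u)=\chi_{\lambda,\nu}(D)T(u)$ on elements of the form $D=z z'$ with $z\in\mathfrak{Z}(G)$ and $z'\in\mathfrak{Z}(G')$. The general case then follows by linearity. That the resulting assignment is multiplicative is automatic, because $\chi_{\lambda,\nu}$ is an algebra homomorphism (Lemma~\ref{lem:zzU}). The only property of $\chi_{\lambda,\nu}$ we use is that $\chi_{\lambda,\nu}(zz')=\chi_\lambda(z)\chi_\nu(z')$, which is exactly the commutativity of the diagram in Lemma~\ref{lem:zzU}.

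For $D=zz'$ we split the action into two steps. First, $d\Pi(zz')u=d\Pi(z')\,d\Pi(z)u$, since $z$ is central and the order of the factors does not matter. Because $\Pi$ has $\mathfrak{Z}(G)$-infinitesimal character $\lambda$, we have $d\Pi(z)u=\chi_\lambda(z)u$. Second, $T$ is a continuous $G'$-homomorphism. Differentiating gives $T(d\Pi(X)v)=d\pi(X)T(v)$ for $X\in\mathfrak g'$ and smooth $v$, and the representations in $\mathcal M(G)$ are smooth. Hence $T$ intertwines the actions of $U(\mathfrak g')$, and in particular of $z'\in\mathfrak{Z}(G')\subset U(\mathfrak g')$. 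Since $\pi$ has $\mathfrak{Z}(G')$-infinitesimal character $\nu$, we get $T(d\Pi(z')w)=d\pi(z')T(w)=\chi_\nu(z')T(w)$. Combining the two steps,
\[
T(d\Pi(zz')u)=\chi_\lambda(z)\chi_\nu(z')T(u)=\chi_{\lambda,\nu}(zz')\,T(u).
\]

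The real content of the argument is the decomposition $U(\mathfrak g)^{G'}=\mathfrak{Z}(G)\cdot\mathfrak{Z}(G')$. This is where the disconnected group $O(n,\mathbb C)$ matters: working only with $U(\mathfrak g)^{\mathfrak g'}$ would require $\mathfrak{Z}(\mathfrak g)$ or $\mathfrak{Z}(\mathfrak g')$, which need not act by scalars when the infinitesimal character is only fixed for $\mathfrak{Z}(G)$ or $\mathfrak{Z}(G')$. Lemma~\ref{lem:25050506} supplies this decomposition, so no obstacle remains.

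A minor point is that "having infinitesimal character" must mean an honest scalar action and not merely a generalized eigencharacter. This is the hypothesis as stated, so no further work is needed there.
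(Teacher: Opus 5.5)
Your proof is correct and follows the paper's argument: write $D$ via Lemma~\ref{lem:25050506} as a finite sum $\sum_i z_i z_i'$ with $z_i\in\mathfrak{Z}(G)$ and $z_i'\in\mathfrak{Z}(G')$, let $z_i$ act on $\Pi$ by the scalar $\chi_\lambda(z_i)$, move $z_i'$ through $T$ by $\mathfrak{g}'$-equivariance, and conclude from the defining property of $\chi_{\lambda,\nu}$. Your extra remarks only spell out what the paper leaves implicit: centrality of $z_i$ justifies the reordering, and the group-invariant algebras $\mathfrak{Z}(G)$, $\mathfrak{Z}(G')$ are needed rather than $\mathfrak{Z}(\mathfrak g)$, $\mathfrak{Z}(\mathfrak g')$.
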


\begin{proof}
By Lemma~\ref{lem:25050506},
we may write
$
   D = \varpi\Bigl(\sum_i z_i \otimes z_i'\Bigr)
$
for some $z_i \in \mathfrak{Z}(G)$
and $z_i' \in \mathfrak{Z}(G')$.
Since $T$ intertwines the $G'$-actions, it follows that
\[
   T\bigl(d\Pi(D)u\bigr)
   =
   \sum_i d \pi(z_i')\, T\bigl(d\Pi(z_i)u\bigr)
   =
   \sum_i \chi_{\lambda}(z_i)\,
           \chi_{\nu}(z_i')\, T(u).
\]
By the definition of $\chi_{\lambda,\nu}$, this equals
$\chi_{\lambda,\nu}(D)\, T(u)$.
\end{proof}
\subsection{Constructing Explicit Elements in $U(\mathfrak{g})^{G'}$}

As shown in Lemma~\ref{lem:25050506}, every element of
$U(\mathfrak{g})^{G'}$ can be expressed as a linear combination of
elements of the form $uv$, where $u \in \mathfrak{Z}(G)$ and
$v \in \mathfrak{Z}(G')$.
In concrete situations, however, it is often not transparent how to
verify whether a given element of $U(\mathfrak{g})$ actually belongs to
$U(\mathfrak{g})^{G'}$.

The main result of this subsection is
Proposition~\ref{prop:24120139}, which provides an explicit construction
of elements in $U(\mathfrak{g})^{G'}$ without recourse to the centers
$\mathfrak{Z}(G)$ or $\mathfrak{Z}(G')$.
This proposition plays a crucial role in the proof of
Theorem~\ref{thm:25061808}.

Recall from \eqref{eqn:Xij} that
$X_{ij} = E_{ij} - E_{ji} \in \mathfrak{g}_{\mathbb{C}}
= \mathfrak{o}(n+1,\mathbb{C})$ for $0 \le i \neq j \le n$.
With our conventions,
$X_{ij} \in \mathfrak{g}'_{\mathbb{C}} = \mathfrak{o}(n,\mathbb{C})$
whenever $1 \le i \neq j \le n$.

We introduce elements
$A^{(N)}, B_j^{(N)} \in U_N(\mathfrak{g})$
$(1 \le j \le n)$,
defined inductively by
\[
   A^{(1)} := 0,
   \qquad
   B_j^{(1)} := X_{0j}.
\]
\begin{align}
\label{eqn:ANdef}
   A^{(N+1)}
   & :=
   \sum_{k=1}^{n} X_{k0}\, B_k^{(N)}, \\
\label{eqn:BNjdef}
   B_j^{(N+1)}
   & :=
   X_{0j}\, A^{(N)}
   +
   \sum_{k=1}^{n} X_{kj}\, B_k^{(N)} .
\end{align}
These elements naturally occur in the analysis of powers of the Casimir element and will be essential in Proposition~\ref{prop:24113019}. For small values of $N$, they take the following explicit forms.
\begin{example}
\label{ex:24113021}
For small values of $N$, the elements $A^{(N)}$ and $B_j^{(N)}$ defined above
are given explicitly as follows:
\begin{align*}
   A^{(2)}
   &= \sum_{a=1}^{n} X_{a0} X_{0a},
\\
   B_j^{(2)}
   &= \sum_{a=1}^{n} X_{aj} X_{0a},
\\
   A^{(3)}
   &= \sum_{a=1}^{n} \sum_{b=1}^{n}
      X_{b0} X_{ab} X_{0a},
\\
   B_j^{(3)}
   &= X_{0j} \sum_{a=1}^{n} X_{a0} X_{0a}
      + \sum_{a=1}^{n} \sum_{b=1}^{n}
        X_{bj} X_{ab} X_{0a},
\\
   A^{(4)}
   &= \biggl( \sum_{b=1}^{n} X_{b0} X_{0b} \biggr)
      \biggl( \sum_{a=1}^{n} X_{a0} X_{0a} \biggr)
      + \sum_{a=1}^{n} \sum_{b=1}^{n} \sum_{c=1}^{n}
        X_{c0} X_{bc} X_{ab} X_{0a}.
%
\end{align*}
\end{example}

We now set $D_j^{(1)} := X_{j0}$.
For $\ell \in \mathbb{N}_{+}$, we define elements
$C^{(\ell+1)}$, $D_j^{(\ell)}$, and $\mathcal{D}^{\ell,N}$ inductively as follows:
\begin{align}
   C^{(\ell+1)}
   & :=
   \sum_{j_1=1}^{n} \cdots \sum_{j_{\ell}=1}^{n}
   X_{j_{\ell}0} \cdots X_{j_1 j_2} X_{0 j_1}
   \in U_{\ell+1}(\mathfrak{g}),
\notag
\\
\label{eqn:dlj}
   D_j^{(\ell)}
   & :=
   \sum_{i_1=1}^{n} \cdots \sum_{i_{\ell-1}=1}^{n}
   X_{i_{\ell-1}0} \cdots X_{i_2 i_3}
   X_{i_1 i_2} X_{j i_1}
   \in U_{\ell}(\mathfrak{g}),
\\
\label{eqn:24120141}
   \mathcal{D}^{\ell,N}
   & :=
   \sum_{j=1}^{n} D_j^{(\ell)} B_j^{(N)}
   \in U_{\ell+N}(\mathfrak{g}).
\end{align}

These definitions admit a convenient symbolic interpretation in terms of
directed paths with $\ell$ arrows:
\begin{align*}
   & C^{(\ell)}
     \colon
     0 \leftarrow \bullet \leftarrow \bullet \leftarrow \bullet
     \cdots \bullet \leftarrow 0,
\\
   & D_j^{(\ell)}
     \colon
     0 \leftarrow \bullet \leftarrow \bullet \leftarrow \bullet
     \cdots \bullet \leftarrow j .
\end{align*}
The main result of this section is the following proposition, whose proof
will be given in the next subsection.

\begin{proposition}
\label{prop:24120139}
For any $\ell \in \mathbb{N}_{+}$,
the elements $A^{(\ell)}$, $C^{(\ell+1)}$, and
$\mathcal{D}^{\ell,N}$
belong to $U(\mathfrak{g})^{G'}$.
\end{proposition}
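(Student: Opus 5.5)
The plan is to verify $\operatorname{Ad}(g)$-invariance directly for every $g \in G'_{\mathbb C} = O(n,\mathbb C)$, embedded in $O(n+1,\mathbb C)$ as $\operatorname{diag}(1,g)$. This treats both components of $O(n,\mathbb C)$ at once, so no separate argument for $g_n$ is needed (contrast Lemma~\ref{lem:25050506}). The approach is classical tensor calculus for $O(n)$: indices in $\{1,\dots,n\}$ carry the standard representation of $O(n,\mathbb C)$, the index $0$ is fixed, and summing over a repeated index is a contraction that produces invariants. The elements $C^{(\ell+1)}$, $D_j^{(\ell)}$, $A^{(N)}$, $B_j^{(N)}$ are chain-shaped products of the generators $X_{ab}$, and the claim amounts to their being fully contracted, apart from one free index in the cases of $D_j^{(\ell)}$ and $B_j^{(N)}$.

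\textbf{Step 1: transformation of the generators.} For $g = (g_{ab})_{1\le a,b\le n} \in O(n,\mathbb C)$ I would first record, from $\operatorname{Ad}(g)E_{ab} = gE_{ab}g^{-1}$ and $g^{-1} = {}^t g$, that
\[
\operatorname{Ad}(g) X_{0a} = \sum_{c=1}^n g_{ca} X_{0c}, \qquad \operatorname{Ad}(g) X_{a0} = \sum_{c=1}^n g_{ca} X_{c0}, \qquad \operatorname{Ad}(g) X_{ab} = \sum_{c,d=1}^n g_{ca} g_{db} X_{cd}.
\]
The only input needed from orthogonality is the identity $\sum_{a} g_{ca} g_{da} = \delta_{cd}$. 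I would also note that $\operatorname{Ad}(g)$ extends to an algebra automorphism of $U(\mathfrak g)$. Therefore $\operatorname{Ad}(g)$ of an ordered product is the ordered product of the transformed factors, and noncommutativity causes no difficulty.

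\textbf{Step 2: the chain elements.} For $C^{(\ell+1)}$, I would substitute the formulas from Step~1 into each factor of $X_{j_\ell 0}\cdots X_{j_1j_2}X_{0j_1}$. Each summation index $j_k$ then appears in exactly two adjacent factors, so the sum over $j_k$ contracts $g_{c j_k} g_{d j_k}$ to $\delta_{cd}$, and the expression collapses back to $C^{(\ell+1)}$. The same computation applied to $D_j^{(\ell)}$, where the index $j$ is left free, gives the vector law
\[
\operatorname{Ad}(g) D_j^{(\ell)} = \sum_{c=1}^n g_{cj} D_c^{(\ell)}.
\]

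\textbf{Step 3: induction for $A^{(N)}$ and $B_j^{(N)}$.} I would prove, by induction on $N$, that $A^{(N)}$ is invariant and that $\operatorname{Ad}(g)B_j^{(N)} = \sum_{c} g_{cj} B_c^{(N)}$. The base case $N=1$ is immediate from Step~1. For the inductive step, I would apply $\operatorname{Ad}(g)$ to \eqref{eqn:ANdef} and \eqref{eqn:BNjdef}:
\begin{itemize}
\item in $\sum_k X_{k0}B_k^{(N)}$, the index $k$ is contracted, so $A^{(N+1)}$ is invariant;
\item in $X_{0j}A^{(N)}$, the factor $A^{(N)}$ is invariant by the induction hypothesis, so this term transforms as a vector in $j$;
\item in $\sum_k X_{kj}B_k^{(N)}$, the index $k$ is contracted and $j$ is left free, so this term also transforms as a vector in $j$.
\end{itemize}
Hence $B_j^{(N+1)}$ transforms as a vector, which closes the induction.

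\textbf{Step 4: conclusion.} Since $\mathcal D^{\ell,N} = \sum_j D_j^{(\ell)} B_j^{(N)}$ contracts two vectors, the vector laws from Steps~2 and~3 together with orthogonality show that $\mathcal D^{\ell,N}$ is invariant. The invariance of $A^{(\ell)}$ was obtained in Step~3, and that of $C^{(\ell+1)}$ in Step~2.

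There is no serious obstacle in this argument; the main risk is bookkeeping. The point to check carefully is that in each contraction the two occurrences of the summed index carry the matching factors $g_{c\,\cdot}$ and $g_{d\,\cdot}$, in the correct slots of the respective $X$'s. This holds because the transformation law of $X_{ab}$ is bilinear in its two slots.
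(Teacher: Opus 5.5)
Your proof is correct, and it takes a different route from the paper's.

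\textbf{The paper's route.} The paper works infinitesimally. Lemma~\ref{lem:250426} proves the bracket relations $[X_{ab},B_j^{(\ell)}]=\delta_{bj}B_a^{(\ell)}-\delta_{aj}B_b^{(\ell)}$, $[X_{ab},D_j^{(\ell)}]=\delta_{bj}D_a^{(\ell)}-\delta_{aj}D_b^{(\ell)}$ and $[X_{ab},A^{(\ell)}]=0$. It uses the same interlocking induction you use, $A^{(\ell-1)},B_j^{(\ell-1)}\Rightarrow B_j^{(\ell)}\Rightarrow A^{(\ell)}$. From these it deduces $[X_{ab},\mathcal{D}^{\ell,N}]=0$, and it obtains $C^{(\ell+1)}$ as the special case $\mathcal{D}^{\ell,1}$ (Lemma~\ref{lem:ACD_relation}). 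This gives membership in $U(\mathfrak g)^{\mathfrak g'}$. The second component is then handled separately in Lemma~\ref{lem:25050414}: sign bookkeeping shows that $\operatorname{Ad}(g_n)$ fixes these elements. The conclusion uses the fact that $U(\mathfrak g)^{G'}$ is the $\operatorname{Ad}(g_n)$-fixed part of $U(\mathfrak g)^{\mathfrak g'}$, which tacitly relies on $SO(n,\mathbb C)$ being connected.

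\textbf{Your route.} You merge both stages into one group-level computation over all of $O(n,\mathbb C)$. Your vector law $\operatorname{Ad}(g)B_j^{(N)}=\sum_c g_{cj}B_c^{(N)}$ is the integrated form of the paper's bracket relation, and for $g=g_n$ it reduces to the sign rule \eqref{eqn:bl_sgn}. Contraction via $g\,{}^t g=I$ replaces the explicit cancellation computations.

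\textbf{What each buys.} Your version is uniform: it needs no connectedness argument and no choice of component representative, and it rests on the familiar invariant-theory principle that full contractions of $O(n)$-tensors are invariant. The paper's version produces the explicit $\mathfrak g'$-bracket relations. These are intrinsic to the Lie algebra and give the $\mathfrak g'$-invariance statement directly, without reference to any group.
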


\subsection{Proof of Proposition~\ref{prop:24120139}}
\subsection{Proof of Proposition~\ref{prop:24120139}}

This subsection is devoted to the proof of
Proposition~\ref{prop:24120139}.

We first record several identities relating
$A^{(\ell)}$, $C^{(\ell+1)}$, and $\mathcal{D}^{\ell,N}$.

\begin{lemma}
\label{lem:ACD_relation}
\rm
\begin{align*}
C^{(\ell+1)}
  &= \mathcal{D}^{\ell,1}
   = \sum_{j=1}^{n} D_j^{(\ell)} X_{0j}, \\
D_k^{(\ell+1)}
  &= \sum_{j=1}^{n} D_j^{(\ell)} X_{kj}, \\
\mathcal{D}^{\ell,N}
  &= C^{(\ell+1)}A^{(N-1)}+\mathcal{D}^{\ell+1,N-1}.
\end{align*}
\end{lemma}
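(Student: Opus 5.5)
These three identities will be checked directly from the defining formulas \eqref{eqn:dlj}, \eqref{eqn:24120141}, the formula for $C^{(\ell+1)}$, and the recursion \eqref{eqn:ANdef}--\eqref{eqn:BNjdef}. All manipulations take place in $U(\mathfrak g)$. Only reindexing of finite sums and associativity are needed; no commutation relations are used, since the order of the factors is never changed.

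\textbf{First identity.} By definition $B_j^{(1)}=X_{0j}$, so $\mathcal D^{\ell,1}=\sum_{j=1}^n D_j^{(\ell)}X_{0j}$. Substituting \eqref{eqn:dlj} for $D_j^{(\ell)}$ gives
\[
\sum_{j}\sum_{i_1,\dots,i_{\ell-1}} X_{i_{\ell-1}0}\cdots X_{i_1i_2}X_{ji_1}X_{0j}.
\]
Renaming $j\mapsto j_1$ and $i_k\mapsto j_{k+1}$ turns this into the defining sum for $C^{(\ell+1)}$. For $\ell=1$ the convention $D_j^{(1)}=X_{j0}$ gives $\sum_j X_{j0}X_{0j}=C^{(2)}$ directly.

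\textbf{Second identity.} Write
\[
\sum_j D_j^{(\ell)}X_{kj}=\sum_j\sum_{i_1,\dots,i_{\ell-1}} X_{i_{\ell-1}0}\cdots X_{ji_1}X_{kj}.
\]
Relabel $j\mapsto i_1'$ and $i_m\mapsto i_{m+1}'$. The result is the path $0\leftarrow i_\ell'\leftarrow\cdots\leftarrow i_1'\leftarrow k$ with $\ell+1$ arrows, which is exactly $D_k^{(\ell+1)}$. Again $\ell=1$ is handled by the convention for $D_j^{(1)}$. In the symbolic language, both identities say that appending an arrow on the right extends the path.

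\textbf{Third identity.} Take $N\ge2$, so that $B_j^{(N)}$ is given by \eqref{eqn:BNjdef} with $N-1$ in place of $N$. Insert this into $\mathcal D^{\ell,N}=\sum_j D_j^{(\ell)}B_j^{(N)}$ and distribute:
\[
\mathcal D^{\ell,N}=\Bigl(\sum_j D_j^{(\ell)}X_{0j}\Bigr)A^{(N-1)}+\sum_k\Bigl(\sum_j D_j^{(\ell)}X_{kj}\Bigr)B_k^{(N-1)}.
\]
By the first identity the first bracket is $C^{(\ell+1)}$. By the second identity the inner bracket of the second term is $D_k^{(\ell+1)}$, so that term equals $\mathcal D^{\ell+1,N-1}$.

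The only point needing care is index bookkeeping: matching the nested indices in \eqref{eqn:dlj} against the definition of $C^{(\ell+1)}$. The interchange of the finite sums over $j$ and $k$ is harmless.
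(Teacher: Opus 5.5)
Your proposal is correct and follows the paper's proof. Like the paper, you obtain the first two identities by reindexing the defining sums, and the third by substituting the recursion for $B_j^{(N)}$ into $\mathcal{D}^{\ell,N}$ and then applying the first two identities. Your explicit restriction to $N\ge 2$ and your separate treatment of $\ell=1$ spell out points the paper leaves implicit.
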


\begin{proof}
The first two identities follow directly from the definitions of
$C^{(\ell+1)}$, $D_j^{(\ell)}$, and $\mathcal{D}^{\ell,N}$.

\medskip
To prove the third identity, we substitute \eqref{eqn:BNjdef} into
\eqref{eqn:24120141} to obtain
\begin{align*}
   \mathcal{D}^{\ell,N}
   &=
   \sum_{j=1}^{n} D_j^{(\ell)} X_{0j} A^{(N-1)}
   + \sum_{j=1}^{n} \sum_{k=1}^{n} D_j^{(\ell)} X_{kj} B_k^{(N-1)} \\
   &=
   C^{(\ell+1)} A^{(N-1)}
   + \sum_{k=1}^{n}
     \Bigl( \sum_{j=1}^{n} D_j^{(\ell)} X_{kj} \Bigr) B_k^{(N-1)}.
\end{align*}
By the second identity, the inner sum equals $D_k^{(\ell+1)}$, and therefore
\[
   \mathcal{D}^{\ell,N}
   = C^{(\ell+1)} A^{(N-1)}
     + \sum_{k=1}^{n} D_k^{(\ell+1)} B_k^{(N-1)}
   = C^{(\ell+1)} A^{(N-1)} + \mathcal{D}^{\ell+1,N-1}.
\]
This completes the proof.
\end{proof}

In the proof below, 
 we frequently use the following basic formula
 for the Lie bracket in ${\mathfrak{o}}_{n+1}$:
\begin{equation}
\label{eqn:XX}
 [X_{ab}, X_{ij}]
 =\delta_{b i}X_{a j}+\delta_{b j}X_{i a}+\delta_{a i}X_{j b}+\delta_{a j}X_{b i}.  
\end{equation}

\begin{lemma}
\label{lem:250426}
Let $1 \le a, b \le n$,
$1 \le j \le n$,
and $\ell \in \mathbb{N}_{+}$.
Then the following commutation relations hold:
\begin{align}
\label{eqn:25042614}
[X_{ab}, A^{(\ell)}]
&= 0,
\\
\label{eqn:25042610}
[X_{ab}, B_j^{(\ell)}]
&= \delta_{bj} B_a^{(\ell)} - \delta_{aj} B_b^{(\ell)},
\\
\label{eqn:25042617}
[X_{ab}, D_j^{(\ell)}]
&= -\delta_{aj} D_b^{(\ell)} + \delta_{bj} D_a^{(\ell)},
\\
\label{eqn:25042619}
[X_{ab}, \mathcal{D}^{\ell, N}]
&= 0.
\end{align}
\end{lemma}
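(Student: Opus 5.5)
The plan is induction on $\ell$, using only the bracket formula \eqref{eqn:XX}, the Leibniz rule $[X,YZ]=[X,Y]Z+Y[X,Z]$, and the recursive definitions \eqref{eqn:ANdef}, \eqref{eqn:BNjdef}, \eqref{eqn:dlj}. Throughout, $1\le a,b\le n$, so $X_{ab}\in\mathfrak g'_{\mathbb C}$.

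The first step is the base identities. For any $1\le k\le n$, \eqref{eqn:XX} gives
\[
[X_{ab},X_{0k}]=\delta_{bk}X_{0a}-\delta_{ak}X_{0b},
\qquad
[X_{ab},X_{k0}]=\delta_{bk}X_{a0}-\delta_{ak}X_{b0}.
\]
The second follows from $X_{k0}=-X_{0k}$. These settle \eqref{eqn:25042610} and \eqref{eqn:25042617} for $\ell=1$, since $B^{(1)}_j=X_{0j}$ and $D^{(1)}_j=X_{j0}$. They also give \eqref{eqn:25042614} for $\ell=1$, because $A^{(1)}=0$.

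The second step is the conceptual reformulation. Identities \eqref{eqn:25042610} and \eqref{eqn:25042617} say that $(B_1^{(\ell)},\dots,B_n^{(\ell)})$ and $(D_1^{(\ell)},\dots,D_n^{(\ell)})$ transform under $\operatorname{ad}(\mathfrak g'_{\mathbb C})$ as vectors in the standard representation $\mathbb C^n$. Likewise, for $1\le k,j\le n$, \eqref{eqn:XX} shows that $X_{kj}$ transforms as an element of $\Lambda^2\mathbb C^n\subset\mathbb C^n\otimes\mathbb C^n$.

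Concretely, the commutator of $X_{ab}$ with a contracted sum $\sum_k Y_k Z_k$, where both $Y_k$ and $Z_k$ are vectors of this type, is
\[
\sum_k\bigl(\delta_{bk}Y_a-\delta_{ak}Y_b\bigr)Z_k+\sum_k Y_k\bigl(\delta_{bk}Z_a-\delta_{ak}Z_b\bigr)
= Y_aZ_b-Y_bZ_a+Y_bZ_a-Y_aZ_b=0.
\]
Similarly, $\sum_k X_{kj}Z_k$ transforms as a vector in the free index $j$. This is a direct computation of the same kind, using the $\Lambda^2$ rule for $X_{kj}$; the cancellation comes from pairing the contracted index.

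The third step is the induction. Suppose \eqref{eqn:25042614} and \eqref{eqn:25042610} hold at level $N$. Then:
\begin{itemize}
\item $A^{(N+1)}=\sum_k X_{k0}B_k^{(N)}$ is a full contraction of two vectors, so it is $\mathfrak g'$-invariant.
\item $B_j^{(N+1)}=X_{0j}A^{(N)}+\sum_k X_{kj}B^{(N)}_k$ is a vector, since $A^{(N)}$ commutes with $X_{ab}$ and the second term is the $\Lambda^2$-contraction just discussed.
\end{itemize}
The recursion $D_k^{(\ell+1)}=\sum_j D_j^{(\ell)}X_{kj}$ from Lemma~\ref{lem:ACD_relation} gives \eqref{eqn:25042617} by the same computation, with the vector now on the left. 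Finally, $\mathcal D^{\ell,N}=\sum_j D^{(\ell)}_jB^{(N)}_j$ is a contraction of two vectors, which gives \eqref{eqn:25042619}.

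The main obstacle is the $\Lambda^2$ contraction step. There, four Kronecker-delta terms from $[X_{ab},X_{kj}]$ must combine with the two terms from $[X_{ab},B_k]$ so that exactly $\delta_{bj}B_a^{(N+1)}-\delta_{aj}B_b^{(N+1)}$ survives. One also has to keep track of the signs from $X_{ji}=-X_{ij}$ and of the order of the noncommuting factors. The Leibniz rule keeps the order intact, so I expect the verification to close up, but this is where I would check signs carefully index by index.
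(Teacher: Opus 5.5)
Your proposal is correct and follows essentially the same route as the paper: a simultaneous induction for $A^{(\ell)}$ and $B_j^{(\ell)}$ via the recursions, a separate induction for $D_j^{(\ell)}$ using $D_k^{(\ell+1)}=\sum_j D_j^{(\ell)}X_{kj}$, and the vector-contraction cancellation for $\mathcal{D}^{\ell,N}$; your tensor-contraction language only repackages the paper's index computations. The step you flag does close up: two of the four terms of $\sum_k[X_{ab},X_{kj}]B_k^{(N)}$ cancel against $\sum_k X_{kj}[X_{ab},B_k^{(N)}]$, and the other two combine with $[X_{ab},X_{0j}]A^{(N)}$ to give $\delta_{bj}B_a^{(N+1)}-\delta_{aj}B_b^{(N+1)}$, exactly as in the paper's explicit Step~2.
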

\begin{proof}
We divide the proof into several steps.

We begin with the simplest case, namely the identity
\eqref{eqn:25042617} for $D_j^{(\ell)}$, which is proved in
\textbf{Step~1}.
The identity \eqref{eqn:25042610} for $B_j^{(\ell)}$ is proved by a
double induction: first
\[
   A^{(\ell-1)} \ \text{and}\ B_j^{(\ell-1)}
   \ \Longrightarrow\ B_j^{(\ell)}
\]
in \textbf{Step~2}, and then
\[
   B_j^{(\ell)} \ \Longrightarrow\ A^{(\ell)}
\]
in \textbf{Step~3}.
Finally, the remaining assertion \eqref{eqn:25042619} is proved in
\textbf{Step~4} using \eqref{eqn:25042610} and \eqref{eqn:25042617}.

\medskip
\noindent\textbf{Step~1.}
We prove \eqref{eqn:25042617} for $D_j^{(\ell)}$ by induction on $\ell$.

For $\ell=1$, we have $D_j^{(1)} = X_{j0}$.
It follows immediately from \eqref{eqn:XX} that
\[
   [X_{ab}, D_j^{(1)}]
   = \delta_{bj} X_{a0} - \delta_{aj} X_{b0}
   = -\delta_{aj} D_b^{(1)} + \delta_{bj} D_a^{(1)} .
\]

Assume that \eqref{eqn:25042617} holds for $D_j^{(\ell)}$.
From the definition \eqref{eqn:dlj}, we have
\[
   D_k^{(\ell+1)}
   = \sum_{j=1}^{n} D_j^{(\ell)} X_{kj}.
\]
Therefore,
\[
   [X_{ab}, D_k^{(\ell+1)}]
   =
   \sum_{j=1}^{n} [X_{ab}, D_j^{(\ell)}] X_{kj}
   +
   \sum_{j=1}^{n} D_j^{(\ell)} [X_{ab}, X_{kj}].
\]

By the induction hypothesis and \eqref{eqn:XX}, the right-hand side equals
\[
   \sum_{j=1}^{n}
   \bigl(-\delta_{aj} D_b^{(\ell)} + \delta_{bj} D_a^{(\ell)}\bigr) X_{kj}
   +
   \sum_{j=1}^{n} D_j^{(\ell)}
   (\delta_{bk} X_{aj} + \delta_{bj} X_{ka}
    + \delta_{ak} X_{jb} + \delta_{aj} X_{bk}).
\]
Collecting terms, this simplifies to
\[
   \delta_{bk} \sum_{j=1}^{n} D_j^{(\ell)} X_{aj}
   -
   \delta_{ak} \sum_{j=1}^{n} D_j^{(\ell)} X_{bj}
   =
   \delta_{bk} D_a^{(\ell+1)}
   -
   \delta_{ak} D_b^{(\ell+1)}.
\]
Thus \eqref{eqn:25042617} holds for $D_j^{(\ell+1)}$, completing the inductive step.

\noindent{\bf Step~2.}\enspace
We prove \eqref{eqn:25042610} for $B_j^{(\ell+1)}$ under the assumptions
that \eqref{eqn:25042614} holds for $A^{(\ell)}$ and
\eqref{eqn:25042610} holds for $B_j^{(\ell)}$.

By the definition \eqref{eqn:BNjdef} of $B_j^{(\ell+1)}$, we have
\begin{align*}
 [X_{ab}, B_j^{(\ell+1)}]
 =\,
 & [X_{ab}, X_{0j}]\,A^{(\ell)}
   + X_{0j}[X_{ab}, A^{(\ell)}]
\\
 & + \sum_{k=1}^n [X_{ab}, X_{kj}]\,B_k^{(\ell)}
   + \sum_{k=1}^n X_{kj}[X_{ab}, B_k^{(\ell)}].
\end{align*}
By the inductive hypothesis, the second term vanishes, and the fourth
term equals $X_{bj}B_a^{(\ell)} - X_{aj}B_b^{(\ell)}$.

On the other hand, a direct computation using \eqref{eqn:XX} gives
\begin{align*}
 \text{(first term)}
 &= \delta_{bj} X_{0a} A^{(\ell)}
    + \delta_{aj} X_{b0} A^{(\ell)},
\\
 \text{(third term)}
 &= X_{aj} B_b^{(\ell)} + X_{jb} B_a^{(\ell)}
    + \delta_{bj} \sum_{k=1}^n X_{ka} B_k^{(\ell)}
    + \delta_{aj} \sum_{k=1}^n X_{bk} B_k^{(\ell)}.
\end{align*}
Combining these expressions, we obtain
\begin{align*}
 [X_{ab}, B_j^{(\ell+1)}]
 =\,
 & \delta_{aj}\Bigl(X_{b0} A^{(\ell)}
   + \sum_{k=1}^n X_{bk} B_k^{(\ell)}\Bigr)
\delta_{bj}\Bigl(X_{0a} A^{(\ell)}
   + \sum_{k=1}^n X_{ka} B_k^{(\ell)}\Bigr)
\\
 =\,
 & -\delta_{aj} B_b^{(\ell+1)}
   + \delta_{bj} B_a^{(\ell+1)},
\end{align*}
where the last equality follows from the definition
\eqref{eqn:BNjdef} of $B_a^{(\ell+1)}$.

This completes Step~2.

\noindent{\bf Step~3.}\enspace
We prove \eqref{eqn:25042614} for $A^{(\ell)}$ and
\eqref{eqn:25042610} for $B_j^{(\ell)}$ by induction on $\ell$.

\medskip
\noindent
We first verify the assertions for $\ell=1$.

Since $A^{(1)}=0$, the relation \eqref{eqn:25042614} for $A^{(\ell)}$
is trivial when $\ell=1$.
Moreover, since $B_j^{(1)}=X_{0j}$, it follows from \eqref{eqn:XX} that
\[
   [X_{ab}, B_j^{(1)}]
   = \delta_{bj} X_{a0} - \delta_{aj} X_{b0}
   = \delta_{bj} B_a^{(1)} - \delta_{aj} B_b^{(1)},
\]
which proves \eqref{eqn:25042610} for $B_j^{(\ell)}$ when $\ell=1$.

\medskip
\noindent
Next, assume that the assertions of Step~3 hold for $\ell-1$.
Then \eqref{eqn:25042610} for $B_j^{(\ell)}$ has already been established
in Step~2.

It remains to prove \eqref{eqn:25042614} for $A^{(\ell)}$.
By the definition \eqref{eqn:ANdef} of $A^{(\ell)}$, we have
\[
   [X_{ab}, A^{(\ell)}]
   =
   \sum_{k=1}^{n} [X_{ab}, X_{k0}]\, B_k^{(\ell-1)}
   +
   \sum_{k=1}^{n} X_{k0}\,[X_{ab}, B_k^{(\ell-1)}].
\]

Using \eqref{eqn:XX} together with the inductive hypothesis
\eqref{eqn:25042610} for $B_k^{(\ell-1)}$, we compute
\begin{align*}
   \text{(first term)}
   &=
   \sum_{k=1}^{n}
   (\delta_{bk} X_{a0} + \delta_{ak} X_{0b}) B_k^{(\ell-1)}
   =
   X_{a0} B_b^{(\ell-1)} + X_{0b} B_a^{(\ell-1)}, 
\\
   \text{(second term)}
   &=
   \sum_{k=1}^{n}
   X_{k0} (\delta_{bk} B_a^{(\ell-1)} - \delta_{ak} B_b^{(\ell-1)})
   =
   X_{b0} B_a^{(\ell-1)} - X_{a0} B_b^{(\ell-1)}.
\end{align*}
Therefore, the two terms cancel each other, and we conclude that
\[
   [X_{ab}, A^{(\ell)}]=0.
\]
This completes Step~3.

\noindent{\bf Step~4.}\enspace
We prove \eqref{eqn:25042619} for $\mathcal{D}^{\ell,N}$
using \eqref{eqn:25042610} for $B_j^{(N)}$
and \eqref{eqn:25042617} for $D_j^{(\ell)}$.
By the definition \eqref{eqn:24120141} of $\mathcal{D}^{\ell,N}$, we have
\[
   [X_{ab}, \mathcal{D}^{\ell,N}]
   =
   \sum_{j=1}^{n} [X_{ab}, D_j^{(\ell)}]\, B_j^{(N)}
   +
   \sum_{j=1}^{n} D_j^{(\ell)} [X_{ab}, B_j^{(N)}].
\]
Using \eqref{eqn:25042617} for $[X_{ab}, D_j^{(\ell)}]$
and \eqref{eqn:25042610} for $[X_{ab}, B_j^{(N)}]$, we obtain
\begin{align*}
   [X_{ab}, \mathcal{D}^{\ell,N}]
   =
   \sum_{j=1}^{n}
   (-\delta_{aj} D_b^{(\ell)} + \delta_{bj} D_a^{(\ell)}) B_j^{(N)}
   +
   \sum_{j=1}^{n}
   D_j^{(\ell)} (\delta_{bj} B_a^{(N)} - \delta_{aj} B_b^{(N)}).
\end{align*}
Simplifying, this becomes
\[
   (-D_b^{(\ell)} B_a^{(N)} + D_a^{(\ell)} B_b^{(N)})
   +
   (D_b^{(\ell)} B_a^{(N)} - D_a^{(\ell)} B_b^{(N)})
   = 0.
\]
This completes the proof.
\end{proof}

Note that the subalgebra of
$\operatorname{Ad}(g_n)$-invariant elements in
$U(\mathfrak{g})^{\mathfrak{g}'}$
coincides with $U(\mathfrak{g})^{G'}$,
where 
$
   g_n = \operatorname{diag}(1,1,\dots,1,-1)
   \in G'_\mathbb{C} \subset G_\mathbb{C}
$
is as in \eqref{eqn:gn}.
We now show that
$A^{(\ell)}$, $C^{(\ell+1)}$, and $\mathcal{D}^{\ell,N}$
are invariant under $\operatorname{Ad}(g_n)$.

\begin{lemma}
\label{lem:25050414}
For any $\ell \in \mathbb{N}_{+}$, one has
\begin{align}
\label{eqn:al_sgn}
\operatorname{Ad}(g_n) A^{(\ell)}
&= A^{(\ell)}, 
\\
\label{eqn:bl_sgn}
\operatorname{Ad}(g_n) B_j^{(\ell)}
&=
\begin{cases}
 B_j^{(\ell)} & \text{if $j \neq n$,} \\
 -B_j^{(\ell)} & \text{if $j = n$,}
\end{cases}
\\
\label{eqn:dl_sgn}
\operatorname{Ad}(g_n) D_j^{(\ell)}
&=
\begin{cases}
 D_j^{(\ell)} & \text{if $j \neq n$,} \\
 -D_j^{(\ell)} & \text{if $j = n$,}
\end{cases}
\\
\notag
\operatorname{Ad}(g_n) \mathcal{D}^{\ell,N}
&= \mathcal{D}^{\ell,N},
\qquad
\operatorname{Ad}(g_n) C^{(\ell+1)}
= C^{(\ell+1)} .
\end{align}
\end{lemma}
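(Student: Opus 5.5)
The plan is to compute directly how $\operatorname{Ad}(g_n)$ acts on the generators $X_{ij}$ and then propagate this through the inductive definitions. Since $g_n=\operatorname{diag}(1,\dots,1,-1)$ with indices $0,\dots,n$, we have $\operatorname{Ad}(g_n)E_{ij}=\epsilon_i\epsilon_jE_{ij}$, where $\epsilon_k=1$ for $k\neq n$ and $\epsilon_n=-1$. Hence
\[
   \operatorname{Ad}(g_n)X_{ij}=\epsilon_i\epsilon_j X_{ij},
\]
so $X_{ij}$ is fixed unless exactly one of $i,j$ equals $n$, in which case it changes sign. Since $\operatorname{Ad}(g_n)$ is an algebra automorphism of $U(\mathfrak g)$, it suffices to track these signs multiplicatively.

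For $B_j^{(\ell)}$ and $A^{(\ell)}$ I will argue by simultaneous induction on $\ell$, following the recursion \eqref{eqn:ANdef}, \eqref{eqn:BNjdef}. The claim to carry along is
\[
   \operatorname{Ad}(g_n)A^{(\ell)}=A^{(\ell)},\qquad
   \operatorname{Ad}(g_n)B_j^{(\ell)}=\epsilon_jB_j^{(\ell)}.
\]
The base case holds because $A^{(1)}=0$ and $B_j^{(1)}=X_{0j}$, which picks up the sign $\epsilon_0\epsilon_j=\epsilon_j$. For the inductive step:
\begin{itemize}
\item each summand $X_{k0}B_k^{(\ell)}$ of $A^{(\ell+1)}$ acquires the sign $\epsilon_k\cdot\epsilon_k=1$;
\item the summand $X_{0j}A^{(\ell)}$ of $B_j^{(\ell+1)}$ acquires $\epsilon_j$;
\item each summand $X_{kj}B_k^{(\ell)}$ of $B_j^{(\ell+1)}$ acquires $\epsilon_k\epsilon_j\epsilon_k=\epsilon_j$.
\end{itemize}
This gives \eqref{eqn:al_sgn} and \eqref{eqn:bl_sgn}.

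For $D_j^{(\ell)}$ I will either induct via the relation $D_k^{(\ell+1)}=\sum_j D_j^{(\ell)}X_{kj}$ from Lemma~\ref{lem:ACD_relation}, or observe directly from \eqref{eqn:dlj} that each monomial is a path product. In a path product every internal index appears exactly twice, so the total sign is $\epsilon_0\epsilon_j=\epsilon_j$. This proves \eqref{eqn:dl_sgn}.

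The same path argument shows that every monomial of $C^{(\ell+1)}$ has sign $\epsilon_0^2=1$. Alternatively, one can use $C^{(\ell+1)}=\sum_jD_j^{(\ell)}X_{0j}$, whose summands have sign $\epsilon_j^2=1$. Finally, $\mathcal D^{\ell,N}=\sum_jD_j^{(\ell)}B_j^{(N)}$ has summands with sign $\epsilon_j\epsilon_j=1$, so it is also fixed.

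Combining these fixed-point statements with Lemma~\ref{lem:250426}, which gives $\mathfrak g'$-invariance, and with the earlier remark that $U(\mathfrak g)^{G'}$ is the $\operatorname{Ad}(g_n)$-fixed part of $U(\mathfrak g)^{\mathfrak g'}$, then yields Proposition~\ref{prop:24120139}.

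I do not expect a real obstacle here. The only point needing care is to keep the sign bookkeeping consistent with the indexing $0,\dots,n$, so that $g_n$ flips exactly the index $n$ and fixes the index $0$.
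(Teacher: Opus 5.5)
Your proposal is correct and follows essentially the paper's argument. The paper likewise uses the sign rule for $\operatorname{Ad}(g_n)X_{ij}$, inducts simultaneously on $A^{(\ell)}$ and $B_j^{(\ell)}$ through the recursions, reads off $D_j^{(\ell)}$ directly from its definition, and gets $\mathcal{D}^{\ell,N}$ by pairing the signs of $D_j^{(\ell)}$ and $B_j^{(N)}$. The differences are cosmetic: the paper isolates the $k=n$ terms where you use uniform $\epsilon_i\epsilon_j$ bookkeeping, and your explicit treatment of $C^{(\ell+1)}$ (via $C^{(\ell+1)}=\mathcal{D}^{\ell,1}$ or path parity) covers a case the paper leaves implicit.
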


\begin{proof}
Using the formula
\[
   \operatorname{Ad}(g_n) X_{ij}
   =
   \begin{cases}
   -X_{ij} & \text{if $i=n$ or $j=n$,} \\
   \;\,X_{ij} & \text{if $0 \le i,j \le n-1$,}
   \end{cases}
\]
the identity \eqref{eqn:dl_sgn} for $D_j^{(\ell)}$
follows directly from its definition \eqref{eqn:dlj}.

We next verify \eqref{eqn:al_sgn} and \eqref{eqn:bl_sgn}
for $A^{(\ell)}$ and $B_j^{(\ell)}$
by induction on $\ell$.
For $\ell=1$, the claims hold since
$A^{(1)}=0$ and $B_j^{(1)}=X_{0j}$.

Assume that \eqref{eqn:al_sgn} and \eqref{eqn:bl_sgn}
hold for a given $\ell$.
Rewriting \eqref{eqn:ANdef}, we have
\[
   A^{(\ell+1)}
   =
   X_{n0} B_n^{(\ell)}
   + \sum_{k=1}^{n-1} X_{k0} B_k^{(\ell)} .
\]
Then \eqref{eqn:al_sgn} for $A^{(\ell+1)}$
follows immediately from \eqref{eqn:bl_sgn}.

Similarly, rewriting \eqref{eqn:BNjdef} yields
\[
   B_j^{(\ell+1)}
   =
   X_{0j} A^{(\ell)}
   + X_{nj} B_n^{(\ell)}
   + \sum_{k=1}^{n-1} X_{kj} B_k^{(\ell)} .
\]
The transformation rule \eqref{eqn:bl_sgn}
for $B_j^{(\ell+1)}$ then follows from
\eqref{eqn:al_sgn} and \eqref{eqn:bl_sgn}
for $A^{(\ell)}$ and $B_j^{(\ell)}$.
Thus \eqref{eqn:al_sgn} and \eqref{eqn:bl_sgn}
hold for all $\ell$.

Finally, rewriting \eqref{eqn:24120141} as
\[
   \mathcal{D}^{\ell,N}
   =
   D_n^{(\ell)} B_n^{(N)}
   + \sum_{j=1}^{n-1} D_j^{(\ell)} B_j^{(N)},
\]
the invariance of $\mathcal{D}^{\ell,N}$
follows immediately from \eqref{eqn:bl_sgn}
and \eqref{eqn:dl_sgn}.
\end{proof}

\begin{proof}
[Proof of Proposition~\ref{prop:24120139}]
By Lemma~\ref{lem:250426} together with the first formula in Lemma~\ref{lem:ACD_relation},
we have
\[
   A^{(\ell)},\; C^{(\ell+1)},\; \mathcal{D}^{\ell,N}
   \in U(\mathfrak{g})^{\mathfrak{g}'}.
\]
Lemma~\ref{lem:25050414} then implies that
\[
   A^{(\ell)},\; C^{(\ell+1)},\; \mathcal{D}^{\ell,N}
   \in U(\mathfrak{g})^{G'}.
\]
This completes the proof of Proposition~\ref{prop:24120139}.
\end{proof}

\section{Composition Formula for Symmetry Breaking Operators}
This section establishes the essential part of the proof of the universal
scalar identity (Theorem~\ref{thm:25061808}) by expressing the relevant
symmetry breaking operators as explicit polynomials in the Casimir element.
The determination of the resulting rational functions is carried out in the
final section.

More precisely, we introduce and analyze the composition
\[
 (T \otimes \operatorname{pr}_{F \to F'})
 \circ P_{\lambda + \varepsilon e_i}
 \circ (\operatorname{id} \otimes \iota_{F'\to F}),
\]
and derive an explicit formula for this expression using a polynomial in the Casimir element.

\subsection{Diagonal Action of Powers of the Casimir Element}

We analyze and compute the diagonal action of powers of the Casimir element
on the tensor product representation $\Pi \otimes F$.
The main result of this subsection is
Proposition~\ref{prop:24113019}, which serves as a foundational step
in the proof of Theorem~\ref{thm:25061808}.

Let $\mathfrak g_\mathbb c=\mathfrak{o}(n+1, \mathbb C)$ and $r=\lfloor \frac{n+1}{2} \rfloor$.
We fix $\lambda \in \mathbb{C}^r$.
It is convenient to introduce a shifted Casimir operator,
depending on $\lambda$, by
\begin{equation}
\label{eqn:cGtilde}
   \widetilde c_G
   :=
   \frac{1}{2}
   \bigl(
      c_G
      -
      (\lVert \lambda \rVert^2 - \lVert \rho_\mathfrak{g} \rVert^2 + n)
   \bigr).
\end{equation}

\medskip
\noindent
Recall that $F=\mathbb{C}^{n+1}$ denotes the standard representation
of $G$, with standard basis
\[
   f_0, f_1, \dots, f_n \in F.
\]
The one-dimensional subspace
$
   F'=\mathbb{C} f_0
$
consists of $G'$-fixed vectors.
As in \eqref{eqn:pr_iota}, we write
\[
   \operatorname{pr}_{F \to F'}
   \colon F \to \mathbb{C},
   \qquad
   \iota_{F' \to F}
   \colon \mathbb{C} \to F
\]
for the natural projection and embedding, respectively.
\medskip
We also recall that the elements
$A^{(N)}$ and $B_j^{(N)} \in U_N(\mathfrak{g})$
$(1 \le j \le n)$
are defined inductively in
\eqref{eqn:ANdef} and \eqref{eqn:BNjdef}, respectively.

\begin{proposition}
\label{prop:24113019}
Suppose that $\Pi \in \mathcal{M}(G)$ has
$\mathfrak{Z}(G)$-infinitesimal character $\lambda$.
For any vector $u \in \Pi$, the following identity holds in
$\Pi \otimes F$:
\begin{equation}
\label{eqn:24113017}
   \widetilde c_G^{\,N} (u \otimes f_0)
   =
   (A^{(N)} u) \otimes f_0
   +
   \sum_{j=1}^{n} B_j^{(N)} u \otimes f_j .
\end{equation}
\end{proposition}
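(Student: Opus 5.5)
The plan is to rewrite the diagonal action of the shifted Casimir element $\widetilde c_G$ on $\Pi\otimes F$ as an explicit ``cross term'' operator, and then argue by induction on $N$. The recursions \eqref{eqn:ANdef}--\eqref{eqn:BNjdef} for $A^{(N+1)}$ and $B_j^{(N+1)}$ should then appear directly as the coefficients of $f_0$ and $f_j$.

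\textbf{Step 1: reduce $\widetilde c_G$ to the cross term.} The diagonal action of $c_G=-\sum_{a<b}X_{ab}^2$ on $\Pi\otimes F$ is
\[
c_G\otimes 1+1\otimes c_G-2\sum_{0\le a<b\le n}X_{ab}\otimes X_{ab}.
\]
Since $\Pi$ has $\mathfrak{Z}(G)$-infinitesimal character $\lambda$, the element $c_G\in\mathfrak{Z}(G)$ acts on all of $\Pi$ by the scalar $\|\lambda\|^2-\|\rho_{\mathfrak g}\|^2$. It acts on $F$ by $n$. Comparing with the definition \eqref{eqn:cGtilde}, the diagonal action is therefore
\[
\widetilde c_G=-\sum_{0\le a<b\le n}X_{ab}\otimes X_{ab}
\]
as an operator on the whole of $\Pi\otimes F$. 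This step needs the scalar action on every vector of $\Pi$, not only on $u$, because $\widetilde c_G$ will be applied to vectors such as $A^{(N)}u$.

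\textbf{Step 2: compute the action on a basis vector.} From $X_{ab}f_k=\delta_{bk}f_a-\delta_{ak}f_b$, I will check that for every $v\in\Pi$ and $0\le k\le n$,
\[
\widetilde c_G(v\otimes f_k)=\sum_{m\ne k}X_{km}v\otimes f_m.
\]
The terms with $b=k$ contribute $-X_{ak}v\otimes f_a=X_{ka}v\otimes f_a$, and the terms with $a=k$ contribute $X_{kb}v\otimes f_b$. The sign bookkeeping here, using $X_{ak}=-X_{ka}$, is the only delicate point, and I expect it to be the main (though minor) obstacle.

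\textbf{Step 3: induction on $N$.} For $N=1$, Step 2 with $k=0$ gives $\sum_{j=1}^n X_{0j}u\otimes f_j$. This agrees with \eqref{eqn:24113017}, since $A^{(1)}=0$ and $B_j^{(1)}=X_{0j}$.

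Now assume \eqref{eqn:24113017} holds for $N$, and apply Step 2 to each term on its right-hand side.
\begin{itemize}
\item The coefficient of $f_0$ is $\sum_{k=1}^n X_{k0}B_k^{(N)}u$, which equals $A^{(N+1)}u$ by \eqref{eqn:ANdef}.
\item The coefficient of $f_j$, for $j\ge1$, is $X_{0j}A^{(N)}u+\sum_{k\ne j}X_{kj}B_k^{(N)}u$. Since $X_{jj}=0$, the restriction $k\neq j$ can be dropped, and this equals $B_j^{(N+1)}u$ by \eqref{eqn:BNjdef}.
\end{itemize}
This closes the induction and proves \eqref{eqn:24113017}.
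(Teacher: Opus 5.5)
Your proposal is correct and follows the paper's argument. The paper likewise uses the scalar action of $c_G$ on $\Pi$ and on $F$ to reduce $\widetilde c_G$ to the cross term $\sum_{i<j}X_{ij}\otimes X_{ji}$, records the one-step action on $u\otimes f_0$ and on $\sum_j u_j\otimes f_j$ (Lemma~\ref{lem:24113017}), and then inducts on $N$ via the recursions \eqref{eqn:ANdef}--\eqref{eqn:BNjdef}; your uniform formula $\widetilde c_G(v\otimes f_k)=\sum_{m\neq k}X_{km}v\otimes f_m$ is a compact repackaging of that lemma.
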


The proof of Proposition~\ref{prop:24113019} repeatedly uses the following lemma.

\begin{lemma}
\label{lem:24113017}
For any vectors $u, u_1, \dots, u_n \in \Pi$, one has the following
identities in $\Pi \otimes F$:
\begin{align*}
   \widetilde c_G (u \otimes f_0)
   &=
   \Bigl(\sum_{j=1}^{n} X_{0j} u \Bigr) \otimes f_j,
\\
   \widetilde c_G \Bigl(\sum_{j=1}^{n} u_j \otimes f_j \Bigr)
   &=
   \Bigl(\sum_{k=1}^{n} X_{k0} u_k \Bigr) \otimes f_0
   +
   \sum_{j=1}^{n}
   \Bigl(\sum_{i=1}^{n} X_{ij} u_i \Bigr) \otimes f_j .
\end{align*}
\end{lemma}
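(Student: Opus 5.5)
The plan is to write the diagonal action of $c_G$ on $\Pi\otimes F$ explicitly, and observe that the shift in \eqref{eqn:cGtilde} removes exactly the two ``diagonal'' pieces. This leaves $\widetilde c_G$ acting as a single mixed tensor. Both identities then follow by evaluating this mixed tensor on the basis vectors $f_0,\dots,f_n$.

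First, each $X_{ij}$ acts on $\Pi\otimes F$ by $X_{ij}\otimes 1+1\otimes X_{ij}$. Squaring and summing in \eqref{eqn:cG} gives
\[
c_G\big|_{\Pi\otimes F}
= c_G\otimes 1+1\otimes c_G-2\sum_{0\le i<j\le n}X_{ij}\otimes X_{ij}.
\]
Since $\Pi$ has $\mathfrak Z(G)$-infinitesimal character $\lambda$, the element $c_G$ acts on $\Pi$ by the scalar $\|\lambda\|^2-\|\rho_{\mathfrak g}\|^2$. It acts on $F$ by $n$, as noted in Section~\ref{sec:Casimir}. Comparing with \eqref{eqn:cGtilde}, we obtain the key formula
\[
\widetilde c_G=-\sum_{0\le i<j\le n}X_{ij}\otimes X_{ij}
\quad\text{on }\Pi\otimes F .
\]
This is the only conceptual step; it uses only that $c_G$ acts by a genuine scalar on $\Pi$.

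Next, $X_{ij}=E_{ij}-E_{ji}$ acts on the standard basis by $X_{ij}f_k=\delta_{jk}f_i-\delta_{ik}f_j$.

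For $k=0$, the only surviving terms are those with $i=0$. There $X_{0j}f_0=-f_j$, so $\widetilde c_G(u\otimes f_0)=\sum_{j=1}^n X_{0j}u\otimes f_j$, which is the first identity (with the sum over $j$ understood to include the factor $\otimes f_j$).

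For $1\le k\le n$, the pairs with $j=k$ contribute $-\sum_{i<k}X_{ik}u\otimes f_i$. The pairs with $i=k$ contribute $+\sum_{j>k}X_{kj}u\otimes f_j$. Using the antisymmetry $X_{ik}=-X_{ki}$, these combine into
\[
\widetilde c_G(u\otimes f_k)=\sum_{0\le i\le n,\ i\ne k}X_{ki}u\otimes f_i .
\]
The $i=0$ term here is $X_{k0}u\otimes f_0$.

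Finally, I apply this with $u=u_k$ and sum over $k$. The coefficient of $f_0$ is $\sum_k X_{k0}u_k$. After relabelling, the coefficient of $f_j$ for $j\ge1$ is $\sum_{i\ne j}X_{ij}u_i=\sum_{i=1}^n X_{ij}u_i$, since $X_{jj}=0$. This gives the second identity.

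The only point needing care is the bookkeeping of signs and index ranges in the $k\ge1$ case; no real obstacle is expected.
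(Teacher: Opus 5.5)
Your proposal is correct and follows essentially the same route as the paper. The paper also uses that $c_G$ acts by scalars on $\Pi$ and on $F$, so that by the Leibniz rule $\widetilde c_G$ reduces to the mixed term $\sum_{0\le i<j\le n} X_{ij}u\otimes X_{ji}f$ (your $-\sum X_{ij}\otimes X_{ij}$), and then evaluates this with $X_{ki}f_j=\delta_{ij}f_k-\delta_{kj}f_i$; your intermediate formula for $\widetilde c_G(u\otimes f_k)$ is just more explicit bookkeeping, and your reading of the first identity as $\sum_{j=1}^n X_{0j}u\otimes f_j$ is the intended one.
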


\begin{proof}
The Casimir element
\[
   c_G = -\sum_{0 \le i < j \le n} X_{ij}^2
\]
acts on $\Pi$ and on $F=\mathbb{C}^{n+1}$
by scalar multiplication with
$
 \lVert \lambda \rVert^2 - \lVert \rho_{\mathfrak g} \rVert^2
$
and $n$, respectively.
By the Leibniz rule, 
the shifted Casimir element
$ 
   \widetilde c_G = \frac{1}{2}
    \bigl(
      c_G
      -
      (\lVert \lambda \rVert^2 - \lVert \rho_\mathfrak{g}\rVert^2 + n)
   \bigr)
$
acts on $u \otimes f$ by
\[
   \widetilde c_G(u \otimes f)
   =
   \sum_{0 \le i < j \le n} X_{ij} u \otimes X_{ji} f .
\]

With our conventions, the standard representation of
$\mathfrak{o}(n+1, \mathbb C)$ on $\mathbb{C}^{n+1}$ is given by
\begin{equation}
\label{eqn:on_Cn}
   X_{ki} f_j
   =
   \delta_{ij} f_k - \delta_{kj} f_i .
\end{equation}
The first identity follows immediately from \eqref{eqn:on_Cn} with $j=0$.

For the second identity, we compute
\begin{align*}
   \widetilde c_G\Bigl(\sum_{j=1}^{n} u_j \otimes f_j \Bigr)
   &=
   \sum_{j=1}^{n} \sum_{0 \le i < k \le n}
   X_{ik} u_j \otimes X_{ki} f_j
\\
   &=
   \sum_{0 \le i < k \le n}
   \bigl(
      X_{ik} u_i \otimes f_k
      -
      X_{ik} u_k \otimes f_i
   \bigr)
\\
   &=
   \Bigl(\sum_{k=1}^{n} X_{k0} u_k \Bigr) \otimes f_0
   +
   \sum_{j=1}^{n}
   \Bigl(\sum_{i=1}^{n} X_{ij} u_i \Bigr) \otimes f_j .
\end{align*}
This completes the proof.
\end{proof}

\begin{proof}
[Proof of Proposition~\ref{prop:24113019}]
We argue by induction on $N$.

For $N=1$, we have $A^{(1)}=0$ and $B_j^{(1)}=X_{0j}$.
Hence, the assertion follows immediately from
Lemma~\ref{lem:24113017}.

Assume that \eqref{eqn:24113017} holds for a given integer $N$.
Applying Lemma~\ref{lem:24113017} to the vector
\[
   \widetilde c_G^{\,N}(u \otimes f_0),
\]
and using the recurrence relations
\eqref{eqn:ANdef} and \eqref{eqn:BNjdef} defining
$A^{(N+1)}$ and $B_j^{(N+1)}$, we obtain
\[
   \widetilde c_G^{\,N+1} (u \otimes f_0)
   =
   (A^{(N+1)} u) \otimes f_0
   +
   \sum_{j=1}^{n} (B_j^{(N+1)} u) \otimes f_j,
\]
which is exactly \eqref{eqn:24113017} for $N+1$.
This completes the inductive step, and hence the proof.
\end{proof}

Let $\chi_{\lambda,\nu}\colon U(\mathfrak{g})^{G'}\to \mathbb C$ be the algebra homomorphism defined in
Lemma~\ref{lem:zzU}.
Let $A^{(\ell)} \in U(\mathfrak{g})^{G'}$ be as defined in
\eqref{eqn:ANdef}.
For each $\ell \in \mathbb{N}_{+}$, we define a polynomial
$b^{(\ell)}(\lambda,\nu)$ by
\begin{equation}
\label{eqn:bN_lmdnu}
   b^{(\ell)}(\lambda,\nu)
   :=
   \chi_{\lambda,\nu}\!\left(A^{(\ell)}\right).
\end{equation}

\begin{proposition}
\label{prop:TprCA}
Let $\Pi\in\mathcal{M}(G)$ be a representation with
$\mathfrak{Z}(G)$-infinitesimal character $\lambda$, 
let
$\pi\in\mathcal{M}(G')$ be a representation with
$\mathfrak{Z}(G')$-infinitesimal character $\nu$,
and let $T\colon \Pi \to \pi$ be a symmetry breaking operator.
Then one has
\[
(T \otimes \operatorname{pr}_{F \to F'})
\bigl(\widetilde c_G^{\,\ell} (u \otimes f_0)\bigr)
=
b^{(\ell)}(\lambda,\nu)\, (T(u)) \otimes f_0
\]
for every $u \in \Pi$ and $\ell \in \mathbb{N}_{+}$.
\end{proposition}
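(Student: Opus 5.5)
The proof is a direct combination of Proposition~\ref{prop:24113019} with the invariance result Proposition~\ref{prop:24120139} and the scalar action from Proposition~\ref{prop:on_ring_onto}.

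\textbf{Step 1: expand and project.} Fix $u\in\Pi$ and $\ell\in\mathbb{N}_+$. By Proposition~\ref{prop:24113019} we have
\[
\widetilde c_G^{\,\ell}(u\otimes f_0)=(A^{(\ell)}u)\otimes f_0+\sum_{j=1}^{n}B_j^{(\ell)}u\otimes f_j .
\]
Next apply $T\otimes\operatorname{pr}_{F\to F'}$. The projection $\operatorname{pr}_{F\to F'}$ is the projection onto $\mathbb{C}f_0$ along $\bigoplus_{j\ge1}\mathbb{C}f_j$. Hence every term with $f_j$, $j\ge1$, is killed, and we obtain
\[
(T\otimes\operatorname{pr}_{F\to F'})\bigl(\widetilde c_G^{\,\ell}(u\otimes f_0)\bigr)=T(A^{(\ell)}u)\otimes f_0 .
\]
Here $f_0$ is identified with $1\in F'=\mathbb{C}$, consistent with the statement.

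\textbf{Step 2: let $A^{(\ell)}$ act by a scalar.} By Proposition~\ref{prop:24120139}, $A^{(\ell)}\in U(\mathfrak g)^{G'}$. Proposition~\ref{prop:on_ring_onto} then gives
\[
T\bigl(d\Pi(A^{(\ell)})u\bigr)=\chi_{\lambda,\nu}(A^{(\ell)})\,T(u)=b^{(\ell)}(\lambda,\nu)\,T(u),
\]
using the definition \eqref{eqn:bN_lmdnu}. Substituting this into Step~1 yields the claim.

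\textbf{Bookkeeping.} Two points need checking, though neither is a real obstacle. First, $\widetilde c_G$ was defined with the same $\lambda$ as the infinitesimal character of $\Pi$, which is what makes Lemma~\ref{lem:24113017}, and hence Proposition~\ref{prop:24113019}, applicable. Second, the statement calls $b^{(\ell)}$ a polynomial; this holds because $\chi_{\lambda,\nu}$ is built from the Harish-Chandra homomorphisms through the isomorphism of Lemma~\ref{lem:25050506}. Neither point affects the identity itself.
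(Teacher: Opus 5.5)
Your proposal is correct and follows essentially the same route as the paper. You expand $\widetilde c_G^{\,\ell}(u\otimes f_0)$ via Proposition~\ref{prop:24113019} and project away the $f_j$ ($j\ge 1$) terms. You then use $A^{(\ell)}\in U(\mathfrak g)^{G'}$ (Proposition~\ref{prop:24120139}) together with Proposition~\ref{prop:on_ring_onto} to replace $T(A^{(\ell)}u)$ by $b^{(\ell)}(\lambda,\nu)\,T(u)$.
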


\begin{proof}
By Proposition~\ref{prop:24113019}, we have
\[
   (T \otimes \operatorname{pr}_{F\to F'})
   \bigl(\widetilde c_G^{\,\ell}(u \otimes f_0)\bigr)
   =
   T\bigl(A^{(\ell)}u\bigr)\otimes f_0.
\]
As shown in Proposition~\ref{prop:24120139},
the element $A^{(\ell)}$ belongs to
$U(\mathfrak{g})^{G'}$.
The asserted identity therefore follows from
Proposition~\ref{prop:on_ring_onto}.
\end{proof}

Before describing the general properties of $b^{(\ell)}$, we give explicit formulas for $b^{(\ell)}(\lambda,\nu)$
for $\ell=1,2,3$.

\begin{example}
\label{ex:blmdnu}
\noindent{\rm(1)}\enspace
$\ell=1$.
Since $A^{(1)}=0$, we have
\[
   b^{(1)}(\lambda,\nu)=0.
\]

\smallskip
\noindent{\rm(2)}\enspace
$\ell=2$.
By the definition \eqref{eqn:cG} of the Casimir element, one has
\[
    A^{(2)}
   =
   -\sum_{0<j\le n} X_{0j}^2
   =
   \sum_{j=1}^{n} X_{j0} X_{0j}
   =
   c_G - c_{G'}.
\]
Hence,
\[
   b^{(2)}(\lambda,\nu)
   =
   \chi_{\lambda,\nu}\!\left(c_G - c_{G'}\right)
=
   (\lVert \lambda\rVert^2-\lVert \rho_{\mathfrak g}\rVert^2)
   -
   (\lVert \nu\rVert^2-\lVert \rho_{G'}\rVert^2).
\]
Therefore,
\[
   b^{(2)}(\lambda,\nu)
   =
   \lVert \lambda\rVert^2
   -
   \lVert \nu\rVert^2
   -
   \frac{1}{8}n(n-1).
\]

\smallskip
\noindent{\rm(3)}\enspace
$\ell=3$.
A direct computation using \eqref{eqn:XX} yields
\begin{equation}
\label{eqn:25051117}
   A^{(3)}=(1-n)c_G + n c_{G'}.
\end{equation}
Consequently,
\[
   b^{(3)}(\lambda,\nu)
   =
   (1-n)\lVert \lambda\rVert^2
   +
   n \lVert \nu\rVert^2
   +
   \frac{1}{24}(n-1)n(2n-1).
\]
\end{example}

Recall from \eqref{eqn:WG} that
$W_G = \mathfrak S_r \ltimes (\mathbb{Z}/2\mathbb{Z})^r,
\
W_{G'} = \mathfrak S_s \ltimes (\mathbb{Z}/2\mathbb{Z})^s,
$
where $r=\lfloor \frac{n+1}{2}\rfloor$ and
$s=\lfloor \frac{n}{2}\rfloor$ are the ranks of $G$ and $G'$, respectively.

\begin{proposition}
\label{prop:241201}
Let $b^{(\ell)}(\lambda,\nu)$ be the polynomial defined in
\eqref{eqn:bN_lmdnu}, in the variables
\[
   \lambda=(\lambda_1,\dots,\lambda_r),
   \qquad
   \nu=(\nu_1,\dots,\nu_s).
\]
Then $b^{(\ell)}(\lambda,\nu)$ has degree at most $\ell$.
Moreover, $b^{(\ell)}(\lambda,\nu)$ is invariant under the action of the
product Weyl group $W_G \times W_{G'}$.
\end{proposition}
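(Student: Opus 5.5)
The plan is to exploit the structure of $U(\mathfrak g)^{G'}$ given by Lemma~\ref{lem:25050506}, together with the Harish-Chandra isomorphism, and to track degrees through the filtration. By Proposition~\ref{prop:24120139} we have $A^{(\ell)}\in U(\mathfrak g)^{G'}$, and by construction $A^{(\ell)}\in U_\ell(\mathfrak g)$, since it is a sum of products of $\ell$ elements $X_{ij}$. So $A^{(\ell)}\in U(\mathfrak g)^{G'}\cap U_\ell(\mathfrak g)$.

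\textbf{Step 1: writing $A^{(\ell)}$ in terms of the two centers.} I would first check that the filtered statement in Lemma~\ref{lem:25050506} refines to the group-invariant setting. Since $\operatorname{Ad}(g_n)$ preserves the filtration and the isomorphism $\varpi$ is equivariant, taking $\operatorname{Ad}(g_n)$-invariants of the filtered isomorphism gives
\[
A^{(\ell)}=\sum_k z_k z_k',\qquad z_k\in\mathfrak Z_{a_k}(G),\ z'_k\in\mathfrak Z_{b_k}(G'),\ a_k+b_k\le \ell.
\]
The precise reading of "$\mathfrak{Z}_a\otimes\mathfrak{Z}_b\simeq U^{\mathfrak g'}_{a+b}$" should be as a sum over $a+b\le m$. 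With that reading, the filtered piece of the tensor product maps onto $U_\ell(\mathfrak g)^{\mathfrak g'}$. Averaging over the order-two group generated by $\operatorname{Ad}(g_n)$ keeps us inside the filtered piece, so the $z_k,z_k'$ may be taken $\operatorname{Ad}(g_n)$-invariant, i.e.\ in $\mathfrak Z(G)$ and $\mathfrak Z(G')$.

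\textbf{Step 2: evaluating the character.} By Lemma~\ref{lem:zzU},
\[
b^{(\ell)}(\lambda,\nu)=\sum_k \chi_\lambda(z_k)\chi_\nu(z'_k).
\]
The Harish-Chandra isomorphism sends $\mathfrak Z_a(\mathfrak g)$ to $W$-invariant polynomials of degree at most $a$ in $\lambda$. This holds because the Harish-Chandra homomorphism is compatible with the filtration, its symbol being the Chevalley restriction map. Hence $\chi_\lambda(z_k)$ is a $W_G$-invariant polynomial of degree $\le a_k$. Similarly, $\chi_\nu(z_k')$ is $W_{G'}$-invariant of degree $\le b_k$.

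One point needs care for $W_G$ versus $W_{\mathfrak g}$. For $n$ odd, $W_{\mathfrak g}$ is the index-two subgroup with an even number of sign changes. Elements of $\mathfrak Z(G)$, being $O(n+1,\mathbb C)$-invariant, correspond to polynomials invariant under the full group $W_G$, because the outer automorphism acts as a single sign flip. The same holds on the $G'$ side for $n$ even. Taking the sum of products, $b^{(\ell)}$ has degree $\le\max(a_k+b_k)\le\ell$ and is $W_G\times W_{G'}$-invariant.

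\textbf{Main obstacle.} The delicate step is the filtration compatibility in Step 1. We need surjectivity of $\bigoplus_{a+b\le\ell}\mathfrak Z_a(\mathfrak g)\otimes\mathfrak Z_b(\mathfrak g')\to U_\ell(\mathfrak g)^{\mathfrak g'}$, not merely of the unfiltered map. I would rely on the filtered assertion of Lemma~\ref{lem:25050506} as stated. If a justification is wanted, I would pass to the associated graded: the symmetric-algebra statement $S(\mathfrak g)^{\mathfrak g'}=S(\mathfrak g)^{\mathfrak g}\otimes S(\mathfrak g')^{\mathfrak g'}$ holds because these are graded polynomial algebras, and one lifts it degree by degree by induction on $\ell$.
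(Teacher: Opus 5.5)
Your argument is the paper's proof with the details written out. Like the paper, you use $A^{(\ell)}\in U_\ell(\mathfrak g)\cap U(\mathfrak g)^{G'}$, the filtered form of Lemma~\ref{lem:25050506} (rightly read as a sum over $a+b\le\ell$), and the degree- and Weyl-group-compatibility of the Harish-Chandra isomorphism. Two side justifications are loose, but neither is load-bearing, because you ultimately rely on Lemma~\ref{lem:25050506} just as the paper does. First, averaging over $\operatorname{Ad}(g_n)$ by itself would still allow products of two anti-invariant factors; what actually puts the factors in $\mathfrak Z(G)$ and $\mathfrak Z(G')$ is that, for each parity of $n$, one of the two centers (the odd-orthogonal one) is fixed pointwise by $\operatorname{Ad}(g_n)$. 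Second, $S(\mathfrak g)^{\mathfrak g'}=S(\mathfrak g)^{\mathfrak g}\otimes S(\mathfrak g')^{\mathfrak g'}$ does not follow merely from these being polynomial algebras; it is the classical input behind Lemma~\ref{lem:25050506}.
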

\begin{proof}
By the recursive definition \eqref{eqn:ANdef},
one has
$A^{(\ell)} \in U_{\ell}(\mathfrak{g})$.
Lemma~\ref{lem:25050506} implies that
$\chi_{\lambda,\nu}(A^{(\ell)})$ is a polynomial in $\lambda$ and $\nu$ of
degree at most $\ell$.

Furthermore, Proposition~\ref{prop:24120139} shows that
$A^{(\ell)} \in U(\mathfrak{g})^{G'}$.
Hence,
\[
   b^{(\ell)}(\lambda,\nu)
   =
   \chi_{\lambda,\nu}\bigl(A^{(\ell)}\bigr)
\]
is invariant under the action of $W_G \times W_{G'}$.
\end{proof}

\medskip
\subsection{Proof of Theorem~\ref{thm:25061808}}
\label{sec:pf_main}
~~~\newline
Let $\varepsilon \in \{-1,1\}$ and $1 \le i \le r$.
We recall the definition \eqref{eqn:phi_ae} of
\[
   \phi_{\lambda}^{\lambda+\varepsilon e_i}
   \in \mathfrak{Z}(G)[\lambda],
\]
which is a polynomial in the Casimir element $c_G$
with coefficients depending polynomially on $\lambda$.
\begin{lemma}
\label{lem:25051803}
Let $N \in \mathbb{N}_{+}$, $1 \le i \le r$, and
$\varepsilon \in \{+,-\}$.
Then there exists a unique polynomial
$p_{i,\varepsilon,N}(\lambda,\nu)$
in the variables $\lambda$ and $\nu$ with the following property:

For any real form
\[
   G \supset G'
   \quad\text{of}\quad
   O(n+1,\mathbb{C}) \supset O(n,\mathbb{C}),
\]
any representation
$\Pi \in \mathcal{M}(G)$ with
$\mathfrak{Z}(G)$-infinitesimal character $\lambda$,
any representation
$\pi \in \mathcal{M}(G')$ with
$\mathfrak{Z}(G')$-infinitesimal character $\nu$,
and any symmetry breaking operator
$T \colon \Pi \to \pi$,
the following identity holds:
\begin{equation}
\label{eqn:T_p_ieN}
   (T \otimes \operatorname{pr}_{F \to F'})
   \circ
   \bigl(\phi_{\lambda}^{\lambda+\varepsilon e_i}\bigr)^N
   \circ (\operatorname{id}_\Pi \otimes \iota_{F'\to F})
   =
   p_{i,\varepsilon,N}(\lambda,\nu)\,
   T.
\end{equation}
\end{lemma}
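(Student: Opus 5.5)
The plan is to express $(\phi_{\lambda}^{\lambda+\varepsilon e_i})^N$ as a polynomial in the shifted Casimir element $\widetilde c_G$ of \eqref{eqn:cGtilde}, whose coefficients are polynomials in $\lambda$. Each factor in \eqref{eqn:phi_ae} is
\[
\tfrac12\bigl(c_G-\|\lambda+\mu\|^2+\|\rho_{\mathfrak g}\|^2\bigr)
=\widetilde c_G+\tfrac12\bigl(\|\lambda\|^2+n-\|\lambda+\mu\|^2\bigr).
\]
Since $\|\mu\|^2\in\{0,1\}$ for $\mu\in\Delta(F)$, the constant term is an affine-linear polynomial in $\lambda$. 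Expanding the product and its $N$-th power therefore gives
\[
(\phi_{\lambda}^{\lambda+\varepsilon e_i})^N=\sum_{\ell=0}^{M} a_\ell(\lambda)\,\widetilde c_G^{\,\ell},
\]
where $M=N\cdot|\Delta(F)\setminus\{\varepsilon e_i\}|$ and the coefficients $a_\ell$ are explicit polynomials in $\lambda$, depending only on $n,i,\varepsilon,N$. This identity holds in $U(\mathfrak g)[\lambda]$, hence in $\operatorname{End}(\Pi\otimes F)$ for any $\Pi$. Here $\widetilde c_G$ is evaluated at the same $\lambda$ as the infinitesimal character of $\Pi$.

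Next I will apply the composite to $u\in\Pi$. The embedding gives $(\operatorname{id}\otimes\iota_{F'\to F})(u)=u\otimes f_0$. For each $\ell\ge1$, Proposition~\ref{prop:TprCA} yields
\[
(T\otimes\operatorname{pr}_{F\to F'})\bigl(\widetilde c_G^{\,\ell}(u\otimes f_0)\bigr)=b^{(\ell)}(\lambda,\nu)\,T(u).
\]
For $\ell=0$ the term is simply $T(u)$. Summing, I obtain \eqref{eqn:T_p_ieN} with
\[
p_{i,\varepsilon,N}(\lambda,\nu):=a_0(\lambda)+\sum_{\ell\ge1}a_\ell(\lambda)\,b^{(\ell)}(\lambda,\nu).
\]
By Proposition~\ref{prop:241201}, each $b^{(\ell)}(\lambda,\nu)=\chi_{\lambda,\nu}(A^{(\ell)})$ is a polynomial in $(\lambda,\nu)$. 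It is defined through Lemma~\ref{lem:zzU} purely from the element $A^{(\ell)}\in U(\mathfrak g)^{G'}$ (Proposition~\ref{prop:24120139}), so it is independent of the real form and of $\Pi,\pi,T$. Hence $p_{i,\varepsilon,N}$ is a polynomial with the universal property required.

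One point needs care: $b^{(\ell)}$ is only $W_G\times W_{G'}$-invariant, so it is a function of the classes of $\lambda,\nu$. The coefficients $a_\ell(\lambda)$ are not invariant, but they are genuinely polynomials in the chosen representative $\lambda$, which is all the statement asks for.

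Uniqueness is the main obstacle. The identity \eqref{eqn:T_p_ieN} only determines $p_{i,\varepsilon,N}(\lambda,\nu)$ at those $(\lambda,\nu)$ for which some nonzero symmetry breaking operator exists. I would argue that this set is Zariski dense in $\mathbb C^r\times\mathbb C^s$, and then two polynomials agreeing there coincide. The first choice is the finite-dimensional case of the compact pair $(O(n+1),O(n))$. By the classical branching law (Example~\ref{ex:26032917} and Weyl's interlacing), the pairs $(\lambda,\nu)=(\mu+\rho,\mu'+\rho')$ with $\mu'$ interlacing $\mu$ all carry a nonzero $T$. The set of such pairs contains translates of arbitrarily large lattice cones in $\mathbb Z^r\times\mathbb Z^s$, hence is Zariski dense. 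If this approach is awkward, a fallback is to use principal series of $O(n,1)$ restricted to $O(n-1,1)$, whose nonzero symmetry breaking operators exist on an open set of continuous parameters.
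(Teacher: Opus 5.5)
Your proposal is correct and matches the paper's proof: rewriting each factor of $\phi_{\lambda}^{\lambda+\varepsilon e_i}$ as $\widetilde c_G-b\lambda_j+\tfrac{n-1}{2}$ (or $\widetilde c_G+\tfrac n2$), expanding in powers of $\widetilde c_G$, and applying Proposition~\ref{prop:TprCA} termwise, with the trivial $\ell=0$ term handled separately, is exactly how the paper proves the lemma. The paper leaves uniqueness implicit, and your Zariski-density argument via finite-dimensional branching for $(O(n+1),O(n))$ is the same device it uses in Step~2 of the proof of Lemma~\ref{lem:25061804}, so the principal-series fallback is unnecessary (and for $n\ge 3$ it would also need the discrete parameters, since those principal series have only one continuous parameter each).
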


\begin{proof}
Using the shifted Casimir element $\widetilde c_G$
(see \eqref{eqn:cGtilde}), one has
\begin{equation}
\label{eqn:phi_ae2}
   \phi_{\lambda}^{\lambda+\varepsilon e_i}
   =
   \begin{cases}
   \displaystyle
   \prod_{(b,j) \in \Xi \setminus \{(\varepsilon,i)\}}
   \bigl(\widetilde c_G - b \lambda_j + \tfrac{n-1}{2}\bigr),
   & \text{if $n$ is odd}, \\[6pt]
   \displaystyle
   \bigl(\widetilde c_G + \tfrac{n}{2}\bigr)
   \prod_{(b,j) \in \Xi \setminus \{(\varepsilon,i)\}}
   \bigl(\widetilde c_G - b \lambda_j + \tfrac{n-1}{2}\bigr),
   & \text{if $n$ is even}.
   \end{cases}
\end{equation}
Here
\[
   \Xi = \{-1,1\} \times \{1,2,\dots,r\}.
\]

The assertion is therefore an immediate consequence of
Proposition~\ref{prop:TprCA}.
\end{proof}

We set
\begin{equation}
\label{eqn:P_ie}
   p_{i,\varepsilon}(\lambda,\nu)
   :=
   p_{i,\varepsilon,1}(\lambda,\nu).
\end{equation}

\begin{lemma}
\label{lem:25061804}
Let $p_{i,\varepsilon,N}(\lambda,\nu)$ be as defined in
Lemma~\ref{lem:25051803}.
Then, for any $N \in \mathbb{N}_{+}$, one has
\begin{equation}
\label{eqn:p_ieN}
   p_{i,\varepsilon,N}(\lambda,\nu)
   =
   \varphi_{i,\varepsilon}(\lambda)^{\,N-1}
   \, p_{i,\varepsilon}(\lambda,\nu),
\end{equation}
where $\varphi_{i,\varepsilon}(\lambda)$ is the polynomial defined in
\eqref{eqn:phi_ei_lmd}.
\end{lemma}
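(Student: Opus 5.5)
The plan is to prove \eqref{eqn:p_ieN} as an identity of polynomials. It suffices to establish it on a Zariski-dense set of parameters $(\lambda,\nu)$ for which suitable test data $(\Pi,\pi,T)$ exist with $T\neq 0$. However, existence of such data for generic parameters is not obviously available uniformly. A cleaner route works purely algebraically inside $U(\mathfrak g)$, using Proposition~\ref{prop:TprCA} and the structure of $\widetilde c_G$ acting on $\Pi\otimes F$. I will follow that algebraic route.

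\textbf{Step 1: an abstract characteristic-polynomial relation.} Let $Q(x)$ be the polynomial in $x$ (with coefficients polynomial in $\lambda$) such that $\phi_{\lambda}^{\lambda+\varepsilon e_i}=Q(\widetilde c_G)$, as in \eqref{eqn:phi_ae2}. Let $R(x):=\prod_{\tau\in\Delta(F)}(x-\kappa_\tau)$, where $\kappa_\tau:=\tfrac12(\|\lambda+\tau\|^2-\|\lambda\|^2-n)$ is the value of $\widetilde c_G$ on the $(\lambda+\tau)$-primary component. The key claim is that $R(\widetilde c_G)$ annihilates the vector $u\otimes f_0$ for every $u\in\Pi$, or at least after applying $T\otimes\operatorname{pr}_{F\to F'}$. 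To prove it, I would use the classical fact that, on $\Pi\otimes F$ with $\Pi$ having infinitesimal character $\lambda$, the operator $\widetilde c_G$ satisfies the ``Kostant'' polynomial identity $R(\widetilde c_G)=0$ (Kostant's theorem on $Z(\mathfrak g)$-action on $V\otimes F$ for $F$ minuscule-like/standard). Equivalently, one can verify it via the recursion defining $A^{(N)},B_j^{(N)}$, though citing Kostant's result is the plan.

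\textbf{Step 2: reduce $Q^N$ modulo $R$.} Since $Q(x)=R(x)/(x-\kappa_{\varepsilon e_i})$ up to the identification in \eqref{eqn:phi_ae2}, we have the polynomial congruence
\[
Q(x)^2\equiv Q(\kappa_{\varepsilon e_i})\,Q(x)\pmod{R(x)}.
\]
Here $Q(\kappa_{\varepsilon e_i})=\varphi_{i,\varepsilon}(\lambda)$, exactly the scalar computed in the proof of Proposition~\ref{prop:25042720}. Indeed, $Q(x)-Q(\kappa)$ is divisible by $x-\kappa$, so $Q\,(Q-Q(\kappa))$ is divisible by $R$. By induction we obtain $Q^N\equiv\varphi_{i,\varepsilon}(\lambda)^{N-1}Q \pmod R$. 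Applying Step~1 then gives
\[
Q(\widetilde c_G)^N(u\otimes f_0)=\varphi_{i,\varepsilon}(\lambda)^{N-1}Q(\widetilde c_G)(u\otimes f_0).
\]
Composing with $T\otimes\operatorname{pr}_{F\to F'}$ and invoking the uniqueness in Lemma~\ref{lem:25051803} yields \eqref{eqn:p_ieN}.

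\textbf{Main obstacle.} The hard point is Step~1 when the primary components are not genuine eigenspaces, that is, when some $\kappa_\tau$ coincide (singular $\lambda$). In that case $\widetilde c_G$ need only satisfy $R(\widetilde c_G)=0$ with multiplicities, and Kostant's theorem gives exactly this. If instead one works only on the nonsingular locus, where the $\kappa_\tau$ are distinct and Proposition~\ref{prop:25061805} applies, one must argue that $p_{i,\varepsilon,N}$ is a polynomial determined by its values there. This works provided the $b^{(\ell)}$ are defined as polynomials independently of representations (via $\chi_{\lambda,\nu}$), so both sides are polynomial combinations of $b^{(\ell)}(\lambda,\nu)$. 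I would therefore establish the relation $\sum_k r_k b^{(\ell+k)}=0$ among the $b^{(\ell)}$ on a Zariski-dense set, and conclude by continuity.
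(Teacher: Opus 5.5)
Your argument is correct, but it takes a different route from the paper.

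\textbf{The paper's route.} It obtains $\phi^N=\varphi_{i,\varepsilon}(\lambda)^{N-1}\phi$ (with $\phi=\phi_{\lambda}^{\lambda+\varepsilon e_i}$) only when $c_G$ acts by scalars on every primary component of $\Pi\otimes F$, via Proposition~\ref{prop:25042720}. It then specializes to the compact pair $(O(n+1),O(n))$ with $\Pi$ finite-dimensional and $\pi$ occurring in $\Pi|_{G'}$, so that some $T\neq0$ exists. It concludes by Zariski density of such $(\lambda,\nu)$. This also answers your opening worry: density, not genericity, is what is needed, and classical branching supplies it.

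\textbf{Your route.} You use Kostant's theorem, $R(\widetilde c_G)=0$ on $\Pi\otimes F$ with roots counted with multiplicity. You combine it with the factorization $R(x)=(x-\kappa_{\varepsilon e_i})Q(x)$, which is exact by \eqref{eqn:phi_ae2}, the evaluation $Q(\kappa_{\varepsilon e_i})=\varphi_{i,\varepsilon}(\lambda)$, and the congruence $Q^N\equiv Q(\kappa_{\varepsilon e_i})^{N-1}Q \pmod{R}$. This gives the operator identity $\phi^N=\varphi_{i,\varepsilon}(\lambda)^{N-1}\phi$ on $\Pi\otimes F$ directly, for singular $\lambda$ and non-semisimple Casimir action alike. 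So your ``main obstacle'' paragraph and the fallback through the $b^{(\ell)}$ are not needed.

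\textbf{What each costs.} You rely on an external theorem, whereas the paper uses only complete reducibility for compact groups. Also, your passage from the operator identity to the polynomial identity \eqref{eqn:p_ieN} still goes through the uniqueness clause of Lemma~\ref{lem:25051803}. That clause rests on the same density input the paper uses.

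\textbf{One detail to tidy.} Kostant's theorem is phrased for $\mathfrak{Z}(\mathfrak g)$-infinitesimal characters, whereas for $n$ odd $\Pi$ only has a $\mathfrak{Z}(G)$-infinitesimal character. You can split $\Pi$ into $\mathfrak{Z}(\mathfrak g)$-isotypic pieces, with characters $\lambda$ or $s_r\lambda$, which give the same $R$. This works whenever such a splitting exists, e.g.\ if $\lambda_1\cdots\lambda_r\neq0$ or if $\Pi$ is finite-dimensional. The finite-dimensional data already witness the uniqueness you invoke, so this suffices.
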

\begin{proof}
We prove the identity \eqref{eqn:p_ieN} on a Zariski-dense subset of the
$(\lambda,\nu)$-parameter space.

Recall from \eqref{eqn:piF_primary} that
\[
   \Pi \otimes F
   \simeq
   \bigoplus_{\tau \in \Delta(F)}
   P_{\lambda+\tau}(\Pi \otimes F)
\]
is the $\mathfrak{Z}(G)$-primary decomposition.
In particular, each primary component $P_{\lambda+\tau}(\Pi \otimes F)$ is a generalized
eigenspace of the Casimir element~$c_G$.

\medskip
\noindent
{\bf Step~1.\enspace The case of genuine eigenspaces.}
\par\noindent
Assume that the Casimir element $c_G$ acts by scalar multiplication on
each primary component
$P_{\lambda+\tau}(\Pi \otimes F)$ for every weight
$\tau \in \Delta(F)$.
It follows from Proposition~\ref{prop:25042720} that
\[
   \phi_{\lambda}^{\lambda+\varepsilon e_i}
   \bigl(\Pi \otimes F\bigr)
   \subset
   P_{\lambda+\varepsilon e_i}(\Pi \otimes F),
\]
and on this component
$\bigl(\phi_{\lambda}^{\lambda+\varepsilon e_i}\bigr)^{N-1}$
acts by the scalar
$\varphi_{i,\varepsilon}(\lambda)^{N-1}$.
Hence, by Lemma~\ref{lem:25051803},
\begin{align*}
   & (T \otimes \operatorname{pr}_{F \to F'})
   \circ
   \bigl(\phi_{\lambda}^{\lambda+\varepsilon e_i}\bigr)^N
   \circ
   (\operatorname{id}_\Pi \otimes \iota_{F'\to F})
\\
   &=
   \varphi_{i,\varepsilon}(\lambda)^{N-1}
   (T \otimes \operatorname{pr}_{F \to F'}) 
   \circ
   \phi_{\lambda}^{\lambda+\varepsilon e_i}
   \circ
   (\operatorname{id}_\Pi \otimes \iota_{F'\to F})
\\
   &=
   \varphi_{i,\varepsilon}(\lambda)^{N-1}
   p_{i,\varepsilon,1}(\lambda,\nu)\,
   T.
\end{align*}

\medskip
\noindent
{\bf Step~2.\enspace Reduction to the compact case.}
\par\noindent
The polynomial $p_{i,\varepsilon,N}(\lambda,\nu)$ depends only on the
infinitesimal characters and is independent of the choice of real forms
$(G,G')$ or representations $(\Pi,\pi)$.

We may therefore assume that
\[
   (G,G')=(O(n+1),O(n)),
\]
and take $\Pi$ to be an irreducible finite-dimensional representation of
$G$.

Since finite-dimensional representations of a compact group are
completely reducible, the Casimir element $c_G$ acts by scalars on each
primary component
$P_{\lambda+\tau}(\Pi \otimes F)$.
Consequently, the identity \eqref{eqn:p_ieN} holds 
whenever
\begin{itemize}
\item[$\bullet$]
$\lambda$ is the $\mathfrak{Z}(G)$-infinitesimal character of an
irreducible finite-dimensional representation $\Pi$ of $G=O(n{+}1)$,
\item[$\bullet$]
$\nu$ is the $\mathfrak{Z}(G')$-infinitesimal character of an
irreducible finite-dimensional representation of $G'=O(n)$ occurring in
the restriction $\Pi|_{G'}$.
\end{itemize}

Since the set of such pairs $(\lambda,\nu)$ is Zariski-dense in the
parameter space, the polynomial identity \eqref{eqn:p_ieN} holds
identically.
\end{proof}
In Section~\ref{sec:gi_compute}, we shall determine the explicit form of
the polynomial $p_{i,\varepsilon}(\lambda,\nu)$.
We state the result here.

\begin{proposition}
\label{prop:g_formula}
\[
   p_{i,\varepsilon}(\lambda,\nu)
   =
   g_{i,\varepsilon}(\lambda,\nu).
\]
\end{proposition}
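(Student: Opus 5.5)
The plan is to identify $p_{i,\varepsilon}(\lambda,\nu)$ through three properties: its zeros, its degree, and one normalization. Two facts from the paper make this feasible. By Lemma~\ref{lem:25051803}, $p_{i,\varepsilon}$ is a universal polynomial that does not depend on the real form. By Lemma~\ref{lem:25061804}, it can be computed on the Zariski-dense set of pairs $(\lambda,\nu)$ arising from the compact pair $(O(n+1),O(n))$.

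\textbf{Step 1 (degree and symmetry).} Expand $\phi_{\lambda}^{\lambda+\varepsilon e_i}$ as the product \eqref{eqn:phi_ae2} of $2r-1$ (respectively $2r$) linear factors in $\widetilde c_G$, with coefficients linear in $\lambda$. Proposition~\ref{prop:TprCA} then expresses $p_{i,\varepsilon}$ as a combination of the $b^{(\ell)}(\lambda,\nu)$. By Proposition~\ref{prop:241201}, $b^{(\ell)}$ has degree at most $\ell$. Hence $p_{i,\varepsilon}$ has total degree at most $2r-1$ for $n$ odd and $2r$ for $n$ even, which matches $\deg g_{i,\varepsilon}$. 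The same expression shows that $p_{i,\varepsilon}$ is $W_{G'}$-invariant in $\nu$. Under $W_G$, the family $\{p_{j,\delta}\}$ is permuted compatibly with how $W_G$ permutes the indices $(j,\delta)$. In particular, flipping the sign of $\lambda_i$ interchanges $(i,+)$ and $(i,-)$.

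\textbf{Step 2 (zeros via Weyl's branching law).} Take $\Pi=F^G(\lambda-\rho)$ irreducible and finite-dimensional, and $\pi\subset\Pi|_{G'}$ with infinitesimal character $\nu$, so that $\lambda,\nu$ interlace. Let $T$ be the projection onto $\pi$. For $\lambda+\varepsilon e_i$ still dominant, $P_{\lambda+\varepsilon e_i}(\Pi\otimes F)$ is the irreducible module with parameter $\lambda+\varepsilon e_i$. When the interlacing between $\lambda+\varepsilon e_i$ and $\nu$ fails, $\pi$ does not occur in this module, so $T\otimes\operatorname{pr}_{F\to F'}$ vanishes on it. This failure happens exactly on the hyperplanes $\lambda_i+\tfrac12\varepsilon=\pm\nu_j$, together with $\lambda_i+\tfrac12\varepsilon=0$ when $n$ is even.

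Proposition~\ref{prop:25042720} then gives $p_{i,\varepsilon}=\varphi_{i,\varepsilon}\cdot 0=0$ at these points. These points are Zariski-dense in each such hyperplane, so each linear factor of $g_{i,\varepsilon}$ divides $p_{i,\varepsilon}$. For $n$ odd, the extra factor $\lambda_i$ of $g_{i,\varepsilon}$ should come from the $\lambda_i\mapsto-\lambda_i$ antisymmetry of Step~1, which interchanges $\varepsilon$ and the factors. Combined with the degree bound, this gives $p_{i,\varepsilon}=c_{i,\varepsilon}\,g_{i,\varepsilon}$ with $c_{i,\varepsilon}$ a constant. By the $W_G$-equivariance of Step~1, this constant is independent of $(i,\varepsilon)$, say $c$.

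\textbf{Step 3 (normalization).} Sum the identity of Theorem~\ref{thm:25061911a} over all $(i,\varepsilon)$, and also over the weight $0$ when $n$ is even, on a genuine-eigenspace (compact) point. Since $\sum_\tau P_{\lambda+\tau}=\operatorname{id}$ and $\operatorname{pr}\circ\iota=1$, this yields
\[
\sum_{i,\varepsilon} c\,\frac{g_{i,\varepsilon}(\lambda,\nu)}{\varphi_{i,\varepsilon}(\lambda)}\;(+\,C_0)=1 .
\]
The left-hand side, with $c=1$, is the Lagrange interpolation / partial-fraction identity for the monic polynomial $\prod_j(x^2-\nu_j^2)$, up to the extra factor, evaluated at the nodes $x=\lambda_i+\tfrac12\varepsilon$. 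Checking this identity forces $c=1$. For $n$ even, the term $C_0$ for the weight $0$ must be treated in the same way.

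The main obstacle I expect is Step~3, together with the sign and parity bookkeeping in Step~2. One has to confirm that the interpolation identity comes out exactly right, including the contribution of the zero weight when $n$ is even. As a fallback, $c$ can be fixed by comparing top-degree terms in $\lambda_i$, or by checking directly against Example~\ref{ex:blmdnu} in rank one.
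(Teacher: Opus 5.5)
Your skeleton (zeros from Weyl's branching law, Weyl group symmetry, the degree bound, then a normalization) is the same as the paper's. However, there is a genuine gap for $n$ odd: nothing in your argument produces the factor $\lambda_i$ of $g_{i,\varepsilon}$.

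After Step~2 and the degree bound, $W_i\times W_{G'}$-invariance and $W_G$-equivariance only give $p_{i,\varepsilon}=(\varepsilon a\lambda_i+b)\,Q(\lambda_i+\tfrac{1}{2}\varepsilon)$, with $Q(x)=\prod_{j=1}^{r-1}(x^2-\nu_j^2)$.

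\textbf{The antisymmetry does not give $b=0$.} The relation $p_{i,+}(\lambda,\nu)=p_{i,-}(s_i\lambda,\nu)$ is not an antisymmetry of a single polynomial. It only interchanges $\varepsilon$, and it holds for every value of $b$.

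\textbf{Step~3 cannot detect $b$ either.} Since $\varphi_{i,\varepsilon}=2\varepsilon h_i$, the $b$-part of $\sum_{i,\varepsilon}p_{i,\varepsilon}/\varphi_{i,\varepsilon}$ is
\[
\frac{b}{2}\sum_{i=1}^{r}\frac{Q(\lambda_i+\frac{1}{2})-Q(\lambda_i-\frac{1}{2})}{\lambda_i\prod_{j\neq i}(\lambda_i^2-\lambda_j^2)} .
\]
Here $(Q(x+\frac{1}{2})-Q(x-\frac{1}{2}))/x$ is a polynomial in $x^2$ of degree $r-2$. So this expression is an $(r-1)$-st divided difference in $\lambda_1^2,\dots,\lambda_r^2$ of a polynomial of degree $r-2$, and hence vanishes identically. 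Your partition-of-unity identity therefore yields exactly $a=1$ (your Lagrange computation is correct for that part) and nothing about $b$. Neither fallback helps: top-degree terms only see $a$, and the rank-one check covers only small $n$, whereas the constants may depend on $n$.

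\textbf{How to close the gap.} You need one more independent input. The paper evaluates at two explicit pairs, $\Pi=\mathbf{1}$ and $\Pi=F$ (both with $\pi=\mathbf{1}$), obtaining $r-1=(r-1)p+q$ and $r=rp+q$. Alternatively, you can show that $p_{r,-}$ vanishes on $\{\lambda_r=0\}$:
\begin{itemize}
\item For $\Pi=F^{O(2r)}(\mu)$ with $\mu_r=0$, the weights $\lambda\pm e_r$ are $W_G$-conjugate.
\item Hence $\phi_\lambda^{\lambda-e_r}$ annihilates all of $\Pi\otimes F$, because its factor indexed by $+e_r$ kills the merged component.
\item The pairs with $\mu_r=0$ and $\pi\subset\Pi|_{G'}$ are Zariski dense in $\{\lambda_r=0\}$, which forces $b=0$.
\end{itemize}

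For $n$ even, Step~3 is incomplete as written, because $C_0=p_0/\varphi_0$ is unknown. Without separate information on $p_0$, the identity is consistent with every $c$. Indeed, with $S=\sum_{i,\varepsilon}g_{i,\varepsilon}/\varphi_{i,\varepsilon}$, the function $\varphi_0(1-cS)$ is a $W_G\times W_{G'}$-invariant polynomial of degree $\le n$ for any $c$, since $S$ has only simple poles along $\lambda_i=\pm\tfrac{1}{2}$. The simplest repair is the paper's: evaluate at $\Pi=\pi=\mathbf{1}$, where $\Pi\otimes F=F$ and $P_{\lambda+e_1}=\mathrm{id}$.

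Two smaller corrections to Step~2:
\begin{itemize}
\item For each $(i,\varepsilon)$, interlacing fails along at most one hyperplane, e.g.\ $\lambda_i-\tfrac{1}{2}=\nu_i$ for $\varepsilon=-$, and none at all for $(1,+)$. The remaining factors must come from $W_{G'}$-invariance and the $s_i$-relation.
\item Your extra hyperplane $2\lambda_i+\varepsilon=0$ for $n$ even is spurious. The compact points on it force $\nu_r=0$, so they are not Zariski dense in it. Moreover, divisibility by $2\lambda_i+\varepsilon$ on top of the $2r$ factors of $g_{i,\varepsilon}$ would contradict the degree bound.
\end{itemize}
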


Here $g_{i,\varepsilon}(\lambda,\nu)$ is given explicitly by
\eqref{eqn:gi_e}.

Postponing the proof of Proposition~\ref{prop:g_formula} to the next
section, we now complete the proof of
Theorem~\ref{thm:25061808}, and hence that of
all the main results, including
Theorem~\ref{thm:25061911a}.

\begin{proof}
[Proof of Theorem~\ref{thm:25061808}]
The theorem follows immediately from
Lemma~\ref{lem:25051803},
Lemma~\ref{lem:25061804}, and
Proposition~\ref{prop:g_formula}.
\end{proof}

\section{Computation of $p_{i,\pm}(\lambda,\nu)$}
\label{sec:gi_compute}

Determining the precise location of the zeros of the polynomial
$p_{i,\varepsilon}(\lambda,\nu)$ is a central issue in the analysis of
branching laws and symmetry breaking operators.
In this section, we prove Proposition~\ref{prop:g_formula}, namely, that
the polynomials $p_{i,\varepsilon}(\lambda,\nu)$ introduced in
Lemma~\ref{lem:25051803} are given explicitly by the polynomial
$g_{i,\varepsilon}(\lambda,\nu)$ defined in \eqref{eqn:gi_e}.
This computation thereby completes the proof of
Theorem~\ref{thm:25061808}, and hence the chain of our main theorems:
\[
   \text{Theorems~\ref{thm:25061911a} and~\ref{thm:25061911b}}
   \;\Longrightarrow\;
   \text{Theorem~\ref{thm:25051110}}
   \;\Longrightarrow\;
   \text{Theorem~\ref{thm:250515}}.
\]

Before proceeding further, we prepare a sequence of auxiliary lemmas.

We begin with some basic properties of the polynomials
$p_{i,\pm}(\lambda,\nu)$.

Recall that
\[
   W_G = \mathfrak S_r \ltimes (\mathbb{Z}/2\mathbb{Z})^r,
   \qquad
   W_{G'} = \mathfrak S_s \ltimes (\mathbb{Z}/2\mathbb{Z})^s,
\]
where $r=\left\lfloor \frac{n+1}{2}\right\rfloor$ and
$s=\left\lfloor \frac{n}{2}\right\rfloor$ are the ranks of $G$ and $G'$, respectively.
For each $1 \le i \le r$, let $W_i \subset W_G$ denote the stabilizer of $e_i$.
Let $s_i \in W_G$ be the reflection changing the sign of the $i$-th coordinate, that is,
\[
   s_i(\lambda_1,\dots,\lambda_r)
   =
   (\lambda_1,\dots,-\lambda_i,\dots,\lambda_r).
\]

\begin{lemma}
[Basic properties of $p_{i,\varepsilon}(\lambda,\nu)$]
\label{lem:25051805}
\rm
\begin{enumerate}
\item
The polynomials 
\newline
$p_{i,\pm}(\lambda,\nu)$ appearing in
Lemma~\ref{lem:25051803} have degree at most $n$.
\item
Viewed as polynomials in $\nu$,
$p_{i,\pm}(\lambda,\nu)$ are invariant under the Weyl group $W_{G'}$.
\item
Viewed as polynomials in $\lambda$,
$p_{i,\pm}(\lambda,\nu)$ are invariant under the subgroup
$W_i \subset W_G$.
\item
For every $1 \le i \le r$, one has
\[
   p_{i,+}(\lambda,\nu)
   =
   p_{i,-}(s_i\lambda,\nu).
\]
\end{enumerate}
\end{lemma}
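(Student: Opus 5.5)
The plan is to read off all four properties from the explicit expression \eqref{eqn:phi_ae2} combined with Proposition~\ref{prop:TprCA}. Expanding the product in \eqref{eqn:phi_ae2} as a polynomial in $\widetilde c_G$ gives
\[
\phi_{\lambda}^{\lambda+\varepsilon e_i}=\sum_{k} a_{k}(\lambda)\,\widetilde c_G^{\,k}.
\]
Applying $(T\otimes\operatorname{pr}_{F\to F'})\circ(\cdot)\circ(\operatorname{id}\otimes\iota_{F'\to F})$ and using Proposition~\ref{prop:TprCA}, with the convention $b^{(0)}=1$ for the constant term, yields
\[
p_{i,\varepsilon}(\lambda,\nu)=\sum_k a_k(\lambda)\,b^{(k)}(\lambda,\nu).
\]
Note that $\widetilde c_G$ itself depends on $\lambda$ through the scalar shift $\|\lambda\|^2$. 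This is harmless, since Proposition~\ref{prop:TprCA} is stated with that same $\lambda$. The number of factors is $2r-1$ for $n$ odd and $2r$ for $n$ even, i.e.\ exactly $n$ in both cases. Each factor is linear in $(\widetilde c_G,\lambda)$, and $b^{(k)}$ has degree $\le k$ by Proposition~\ref{prop:241201}. Thus each term $a_k b^{(k)}$ has degree $\le (n-k)+k=n$, which gives (1).

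For (2), the coefficients $a_k$ do not involve $\nu$, and each $b^{(k)}$ is $W_{G'}$-invariant in $\nu$ by Proposition~\ref{prop:241201}. For (3), the $b^{(k)}$ are $W_G$-invariant in $\lambda$, so it suffices to check that the multiset of factors in \eqref{eqn:phi_ae2} is $W_i$-stable. The factors are indexed by the shifts $b\lambda_j$ with $(b,j)\in\Xi\setminus\{(\varepsilon,i)\}$, together with $0$ when $n$ is even. Equivalently, they are indexed by the weights $\mu\in\Delta(F)\setminus\{\varepsilon e_i\}$ through $\langle\lambda,\mu\rangle$, via $\|\lambda+\mu\|^2=\|\lambda\|^2+2\langle\lambda,\mu\rangle+1$ (or $+0$ for $\mu=0$). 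Any $w\in W_i$ fixes $e_i$, hence permutes $\Delta(F)\setminus\{\varepsilon e_i\}$, and preserves $\|\lambda\|^2$. So $\phi_{w\lambda}^{w\lambda+\varepsilon e_i}=\phi_{\lambda}^{\lambda+\varepsilon e_i}$ as elements of $\mathfrak Z(G)[\lambda]$, and the coefficients satisfy $a_k(w\lambda)=a_k(\lambda)$.

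For (4), apply the same reasoning with $s_i$. We have $s_i(\varepsilon e_i)=-\varepsilon e_i$ and $s_i$ permutes $\Delta(F)$, so $\langle s_i\lambda,\mu\rangle=\langle\lambda,s_i\mu\rangle$. Hence the factors of $\phi_{s_i\lambda}^{s_i\lambda-e_i}$ coincide with those of $\phi_{\lambda}^{\lambda+e_i}$, and $\|s_i\lambda\|^2=\|\lambda\|^2$. Combined with the $W_G$-invariance of $b^{(k)}$, this gives $p_{i,-}(s_i\lambda,\nu)=p_{i,+}(\lambda,\nu)$.

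The only step needing care is the justification that $p_{i,\varepsilon}$ is the polynomial defined by the displayed formula, since Lemma~\ref{lem:25051803} characterizes it by an identity on symmetry breaking operators. Its uniqueness is exactly what allows replacing it by this formula. I would argue that the formula furnishes a valid choice, and that uniqueness holds by the same Zariski-density argument via finite-dimensional pairs $(O(n+1),O(n))$ used in Lemma~\ref{lem:25061804}. In any case the lemma's uniqueness assertion may simply be invoked.
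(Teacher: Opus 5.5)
Your proposal is correct and follows the paper's proof essentially step by step. You expand $\phi_{\lambda}^{\lambda+\varepsilon e_i}$ in powers of $\widetilde c_G$ with coefficients of degree at most $n-k$, apply Proposition~\ref{prop:TprCA} together with the degree bound and $W_G\times W_{G'}$-invariance of $b^{(k)}$ from Proposition~\ref{prop:241201}, and obtain (3) and (4) from the action of $W_i$ and $s_i$ on the weights in $\Delta(F)$; your count of exactly $n$ factors (so $0\le k\le n$) is even slightly more accurate than the range $0\le N\le n-1$ written in the paper, which does not affect the bound.
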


\begin{proof}
\noindent
{\rm (1): \enspace (Degree bound).}\enspace
By the definition \eqref{eqn:phi_ae} of
$\phi_{\lambda}^{\lambda+\varepsilon e_i}$, we may write it as a linear
combination of terms of the form
\[
   q_N(\lambda)\,\widetilde c_G^{\,N},
   \qquad
   0 \le N \le n-1,
\]
where $q_N(\lambda)$ is a polynomial in
$\lambda_1,\dots,\lambda_r$ of degree at most $n-N$.
It then follows from Proposition~\ref{prop:TprCA} and
Proposition~\ref{prop:241201} that the polynomials
$p_{i,\pm}(\lambda,\nu)$ are of degree at most $n$.

\medskip
\noindent
{\rm (2) and (3): \enspace (Weyl group invariance).}\enspace
By Proposition~\ref{prop:241201}, the polynomial
$b^{(N)}(\lambda,\nu)$ is invariant under the action of
$W_G \times W_{G'}$.
Since $\phi_{\lambda}^{\lambda+\varepsilon e_i}\in\mathfrak{Z}(G)[\lambda]$
is invariant under the action of $W_i$ by definition
\eqref{eqn:phi_ae}, we conclude from the construction in
Lemma~\ref{lem:25051803} that $p_{i,\varepsilon,N}(\lambda,\nu)$ is
invariant under $W_i \times W_{G'}$.

{\rm (4):}
Applying the reflection $s_i$ to the defining equation
\eqref{eqn:phi_ae} of $\phi_{\lambda}^{\lambda+\varepsilon e_i}$, we
obtain
\[
   s_i\!\left(\phi_{\lambda}^{\lambda+\varepsilon e_i}\right)
   =
   \phi_{\lambda}^{\lambda-\varepsilon e_i}.
\]
It now follows from Lemma~\ref{lem:25051803} that
\[
   p_{i,-\varepsilon,N}(\lambda,\nu)
   =
   p_{i,\varepsilon,N}(s_i\lambda,\nu).
\]
The assertion~(4) follows by taking $\varepsilon=-$ and $N=1$.
\end{proof}
We therefore restrict our attention to the case
$(i,\varepsilon)=(1,-)$.
The structure of $p_{1,-}(\lambda,\nu)$ depends on the parity of $n$.

\begin{lemma}
\label{lem:25051806}
Let $s = \left\lfloor \frac{n}{2} \right\rfloor$ as before.
There exist a constant $c_{-}$ and a polynomial
$c_{-}(\lambda,\nu)$ satisfying
\[
   p_{1,-}(\lambda,\nu)
   =
   \begin{cases}
      \displaystyle
      c_{-}
      \prod_{j=1}^{s}
      \bigl(\lambda_1-\nu_j-\tfrac{1}{2}\bigr)
      \bigl(\lambda_1+\nu_j-\tfrac{1}{2}\bigr),
      & \text{if $n$ is even}, \\[1.2em]
      \displaystyle
      c_{-}(\lambda,\nu)
      \prod_{j=1}^{s}
      \bigl(\lambda_1-\nu_j-\tfrac{1}{2}\bigr)
      \bigl(\lambda_1+\nu_j-\tfrac{1}{2}\bigr),
      & \text{if $n$ is odd},
   \end{cases}
\]
where $c_{-}(\lambda,\nu)$ is a polynomial of degree at most one.
\end{lemma}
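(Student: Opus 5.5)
The plan is to show that $p_{1,-}(\lambda,\nu)$, viewed as a polynomial in $\nu$ with $\lambda$ generic, vanishes on the $2s$ hyperplanes $\nu_j=\pm(\lambda_1-\tfrac12)$. The stated factorization then follows from $W_{G'}$-invariance and the degree bound.

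\textbf{Step 1 (vanishing on one hyperplane).} It suffices to prove $p_{1,-}(\lambda,\nu)=0$ whenever $\nu_1=\lambda_1-\tfrac12$. The $W_{G'}$-invariance in Lemma~\ref{lem:25051805}(2) then gives vanishing on all hyperplanes $\nu_j=\pm(\lambda_1-\tfrac12)$, that is, on $\lambda_1\mp\nu_j-\tfrac12=0$.

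Since $p_{1,-}$ is a polynomial, it is enough to check this on a Zariski-dense subset of the hyperplane $\{\nu_1=\lambda_1-\tfrac12\}$. Following Step~2 of the proof of Lemma~\ref{lem:25061804}, I would use the compact pair $(O(n+1),O(n))$ and the interlacing branching law (Weyl's law, Example~(1) of the introduction), choosing the following data.
\begin{itemize}
\item $\Pi$ is irreducible finite-dimensional with infinitesimal character $\lambda$.
\item $\pi\subset\Pi|_{G'}$ is irreducible with infinitesimal character $\nu$, where $\nu_1=\lambda_1-\tfrac12$; in highest-weight terms, the first interlacing inequality is an equality.
\item $T$ is the nonzero projection $\Pi\to\pi$.
\end{itemize}
By Theorem~\ref{thm:25061808} with $N=1$ (equivalently, Lemma~\ref{lem:25051803} with Step~1 of Lemma~\ref{lem:25061804}), $p_{1,-}(\lambda,\nu)\,T$ equals $(T\otimes\operatorname{pr})\circ P_{\lambda-e_1}\circ(\operatorname{id}\otimes\iota)$, up to the nonzero factor $\varphi_{1,-}(\lambda)$. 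It therefore suffices to show that $\pi$ does not occur in $\Pi_{\lambda-e_1}|_{G'}$, where $\Pi_{\lambda-e_1}=P_{\lambda-e_1}(\Pi\otimes F)$ is the irreducible component whose highest weight is lowered in the first coordinate. Indeed, if $\operatorname{Hom}_{G'}(\Pi_{\lambda-e_1}|_{G'},\pi)=0$, then $T\otimes\operatorname{pr}$ vanishes on $P_{\lambda-e_1}(\Pi\otimes F)$, so $p_{1,-}(\lambda,\nu)=0$.

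The non-occurrence follows from interlacing: the first coordinate of $\nu$ would have to satisfy $\nu_1\le(\lambda_1-1)+\tfrac12$ after $\rho$-shifts. In the $\rho$-normalization $\rho_{\mathfrak g}-\rho_{\mathfrak g'}=(\tfrac12,\dots)$, this becomes $\nu_1\le\lambda_1-\tfrac12$ with strict failure for $\nu_1=\lambda_1-\tfrac12$. Checking this normalization carefully in both parities, including the $\mu_r=0$ and $O$ versus $SO$ subtleties, is the point I expect to be most delicate. If this choice of sign turns out to be wrong, I would instead choose $\pi$ at the other extreme of the interlacing interval. The set of such $(\lambda,\nu)$ (integral, dominant, with the equality $\nu_1=\lambda_1-\tfrac12$ and the remaining coordinates interlacing freely) is Zariski-dense in the hyperplane, by the same density argument as in Lemma~\ref{lem:25061804}.

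\textbf{Step 2 (degree count).} The $2s$ linear forms $\lambda_1\pm\nu_j-\tfrac12$ are pairwise non-associate irreducibles, so their product $Q$ divides $p_{1,-}$, and we can write $p_{1,-}=c_-(\lambda,\nu)\,Q$. By Lemma~\ref{lem:25051805}(1), $\deg p_{1,-}\le n$, while $\deg Q=2s$. For $n$ even, $2s=n$, so $c_-$ is a constant. For $n$ odd, $2s=n-1$, so $c_-$ has degree at most one. This completes the plan.
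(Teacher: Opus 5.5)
Your proposal is correct and is essentially the paper's own argument: specialize to $(O(n+1),O(n))$, observe that $\pi$ with $\mu_1'=\mu_1$ (i.e.\ $\lambda_1-\nu_1-\tfrac12=0$) cannot occur in $F^{O(n+1)}(\mu-e_1)\simeq P_{\lambda-e_1}(\Pi\otimes F)$, and conclude by $W_{G'}$-invariance and the degree bound $n$. Two small corrections. First, your sign choice is the right one: the interlacing bound for the translated module is $\nu_1\le(\lambda_1-1)-\tfrac12$, not $(\lambda_1-1)+\tfrac12$, so no fallback is needed. Second, justify $p_{1,-}(\lambda,\nu)\,T=\varphi_{1,-}(\lambda)\,(T\otimes\operatorname{pr}_{F\to F'})\circ P_{\lambda-e_1}\circ(\operatorname{id}_\Pi\otimes\iota_{F'\to F})$ using only Lemma~\ref{lem:25051803} and Proposition~\ref{prop:25042720}; citing Theorem~\ref{thm:25061808} would be circular, since that theorem depends on this lemma.
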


The polynomial $p_{1,-}(\lambda,\nu)$ in the variables $\lambda$ and $\nu$
satisfies \eqref{eqn:T_p_ieN} with $i=1$ for any pair of real forms $(G,G')$
of
\[
   (G_{\mathbb{C}},G_{\mathbb{C}}')
   =
   (O(n+1,\mathbb{C}),\,O(n,\mathbb{C}))
\]
and for any pair of representations $(\Pi,\pi)$ of these groups.

We therefore specialize to the case where $(G,G')$ is the pair of compact groups
$(O(n+1),O(n))$ for the proof of Lemma~\ref{lem:25051806}.
For the description of irreducible finite-dimensional
representations $\Pi$ and $\pi$ of $G$ and $G'$,
we use the standard Weyl notation
\[
   \Pi = F^{O(n+1)}(\mu),
   \qquad
   \pi = F^{O(n)}(\mu'),
\]
labeled with
$\mu=(\mu_1,\dots,\mu_{n+1})$ and
$\mu'=(\mu_1',\dots,\mu_n')$.

For instance, $F^{O(n+1)}(e_1)$ is the standard representation $\mathbb{C}^{n+1}$.

\begin{proof}[Proof of Lemma~\ref{lem:25051806}] 
Let $\Pi = F^{O(n+1)}(\mu)$ and  $\pi = F^{O(n)}(\mu')$ be as above. Then the infinitesimal characters of $\Pi$ and $\pi$
are given by $\lambda=(\lambda_1,\dots,\lambda_r)$
and $\nu=(\nu_1,\dots,\nu_s)$, where
\begin{align*}
   \lambda_i
   &= \mu_i + \frac{n+1}{2} - i,
   &\qquad& \text{for $1 \le i \le r$,}
\\
   \nu_j
   &= \mu_j' + \frac{n}{2} - j,
   &\qquad& \text{for $1 \le j \le s$.}
\end{align*}

In particular,
\[
   \lambda_i - \nu_i
   =
   \mu_i - \mu_i' + \frac{1}{2},
   \qquad
   \text{for $1 \le i \le s$.}
\]
Suppose that $\mu_1 > \mu_2$.
Then
\[
P_{\lambda-e_1}
(\Pi \otimes \mathbb{C}^{n+1})
\simeq
F^{O(n+1)}(\mu - e_1).
\]
Moreover, if $\mu_1 = \mu_1'$, then
\[
   \operatorname{Hom}_{G'}
   \bigl(
      P_{\lambda-e_1}(\Pi \otimes \mathbb{C}^{n+1})|_{G'},\,
      \pi
   \bigr)
   \simeq
   \operatorname{Hom}_{G'}
   \bigl(
      F^{O(n+1)}(\mu - e_1)|_{G'},\,
      \pi
   \bigr)
   = \{0\}.
\]
Hence the polynomial $p_{1,-}(\lambda,\nu)$ vanishes whenever
$\mu_1 = \mu_1'$, that is, whenever
\[
   \lambda_1 - \nu_1 - \frac{1}{2} = 0.
\]

Since $p_{1,-}(\lambda,\nu)$ is invariant under the Weyl group
$W_{G'}$, it follows that $p_{1,-}(\lambda,\nu)$ is divisible by
\[
   \prod_{j=1}^{s}
   \bigl(\lambda_1 - \nu_j - \tfrac{1}{2}\bigr)
   \bigl(\lambda_1 + \nu_j - \tfrac{1}{2}\bigr).
\]

Finally, Proposition~\ref{prop:241201} shows that
$p_{1,-}(\lambda,\nu)$ is a polynomial in $\lambda$ and $\nu$ of degree at
most $n$.
This completes the proof of Lemma~\ref{lem:25051806}.
\end{proof}
The following lemma provides further information on the constant
$c_{-}$ and the linear factor $c_{-}(\lambda,\nu)$ appearing in
Lemma~\ref{lem:25051806}, which treated only the case $i=1$ and
$\varepsilon=-$, by clarifying their dependence on
$1 \le i \le r$ and $\varepsilon \in \{+,-\}$.

\begin{lemma}
\label{lem:250518}
There exist constants $c_{+}$, $p$, and $q$, independent of
$1 \le i \le r$ and $\varepsilon \in \{+,-\}$, such that
\[
   p_{i,\varepsilon}(\lambda,\nu)
   =
   \begin{cases}
      \displaystyle
      c_{+}
      \prod_{j=1}^{s}
      \bigl(\lambda_i-\nu_j+\tfrac{1}{2}\varepsilon\bigr)
      \bigl(\lambda_i+\nu_j+\tfrac{1}{2}\varepsilon\bigr),
      & \text{if $n$ is even}, \\[1.2em]
      \displaystyle
      (\varepsilon p\,\lambda_i + q)
      \prod_{j=1}^{s}
      \bigl(\lambda_i-\nu_j+\tfrac{1}{2}\varepsilon\bigr)
      \bigl(\lambda_i+\nu_j+\tfrac{1}{2}\varepsilon\bigr),
      & \text{if $n$ is odd}.
   \end{cases}
\]
\end{lemma}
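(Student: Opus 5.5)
We will derive the formula from Lemma~\ref{lem:25051806} by transporting the factorization of $p_{1,-}(\lambda,\nu)$ to all $(i,\varepsilon)$, using the Weyl group symmetries of Lemma~\ref{lem:25051805}.

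\textbf{Step 1: the sign $\varepsilon$.} By Lemma~\ref{lem:25051805}(4), $p_{1,+}(\lambda,\nu)=p_{1,-}(s_1\lambda,\nu)$. Substituting $\lambda_1\mapsto-\lambda_1$ in Lemma~\ref{lem:25051806} turns each factor $(\lambda_1\mp\nu_j-\tfrac12)$ into $-(\lambda_1\pm\nu_j+\tfrac12)$. Since there are $2s$ such factors, the sign disappears and the product becomes $\prod_j(\lambda_1-\nu_j+\tfrac12)(\lambda_1+\nu_j+\tfrac12)$.

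The prefactor behaves as follows:
\begin{itemize}
\item For $n$ even, $c_-$ is a constant and is unchanged. We set $c_+:=c_-$.
\item For $n$ odd, we first pin down $c_-(\lambda,\nu)$. It has degree at most one. It is invariant under $W_1\times W_{G'}$, because $p_{1,-}$ and the product are both invariant and the product is a nonzero polynomial, so invariance passes to the quotient.
\end{itemize}
A $W_{G'}$-invariant polynomial of degree $\le1$ in $\nu$ cannot involve $\nu$ (sign changes kill linear terms). Similarly, $W_1$ contains sign changes $\lambda_j\mapsto-\lambda_j$ for $j\ge2$, which rules out linear terms in $\lambda_j$ for $j\ge2$. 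Hence $c_-(\lambda,\nu)=a\lambda_1+q$, and $s_1$ sends it to $-a\lambda_1+q$. Writing $p:=-a$, we get $c_\varepsilon=\varepsilon p\lambda_1+q$ in both cases $\varepsilon=\pm$.

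\textbf{Step 2: the index $i$.} Let $w\in W_G$ be the transposition of coordinates $1$ and $i$. Since $w\phi_\lambda^{\lambda+\varepsilon e_1}=\phi_\lambda^{\lambda+\varepsilon e_i}$ by \eqref{eqn:phi_ae}, and $b^{(N)}$ is $W_G$-invariant (Proposition~\ref{prop:241201}), the construction in Lemma~\ref{lem:25051803} gives
\[
p_{i,\varepsilon}(\lambda,\nu)=p_{1,\varepsilon}(w\lambda,\nu).
\]
This is the same argument as in Lemma~\ref{lem:25051805}(4). Substituting $w\lambda$ replaces $\lambda_1$ by $\lambda_i$ throughout, so the constants $c_+$, $p$, $q$ are independent of $i$ and $\varepsilon$.

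\textbf{Main obstacle.} The delicate point is justifying that $p_{i,\varepsilon,N}$ transforms under $w\in W_G$ by permuting the indices. The coefficients of $\phi$, as a polynomial in $\widetilde c_G$, are polynomials in $\lambda$. The values $b^{(N)}(\lambda,\nu)$ also depend on $\lambda$, but only through $W_G$-invariant expressions, and the shift in $\widetilde c_G$ involves only $\|\lambda\|^2$. Once this is written out, so that $p_{i,\varepsilon}$ is an explicit polynomial combination of the $b^{(N)}$ with $\lambda$-coefficients, the substitution is formal.

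A secondary issue is the case $s=0$. Then the product is empty and the divisibility argument of Lemma~\ref{lem:25051806} is vacuous, but the degree-bound and invariance argument of Step 1 still applies.
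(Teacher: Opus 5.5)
Your proof is correct and follows the paper's route. You reduce to $p_{1,-}$ by Weyl-group symmetry, read off the even case from Lemma~\ref{lem:25051806}, and in the odd case use $W_1\times W_{G'}$-invariance together with the degree bound to force $c_{-}(\lambda,\nu)$ to be affine in $\lambda_1$. Your Step~2 also fills a small gap: the paper cites Lemma~\ref{lem:25051805} for the reduction in $i$, but part~(3) there only gives $W_i$-invariance, and your derivation of $p_{i,\varepsilon}(\lambda,\nu)=p_{1,\varepsilon}(w\lambda,\nu)$ from the $W_G$-invariance of $b^{(N)}$ and of $\widetilde c_G$ is exactly what justifies that reduction.
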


\begin{proof}
By Lemma~\ref{lem:25051805}, it suffices to establish the formula for
$p_{1,-}(\lambda,\nu)$.

If $n$ is even, the assertion follows directly from
Lemma~\ref{lem:25051806}.

Assume now that $n=2r-1$ is odd.
The product
\[
   \prod_{j=1}^{s}
   \bigl(\lambda_1-\nu_j-\tfrac{1}{2}\bigr)
   \bigl(\lambda_1+\nu_j-\tfrac{1}{2}\bigr)
\]
is invariant under $W_1 \times W_{G'}$.
Hence the polynomial $c_{-}(\lambda,\nu)$ is also invariant under
$W_1 \times W_{G'}$.
Since $c_{-}(\lambda,\nu)$ has degree at most one, it cannot depend on
$\lambda_2,\dots,\lambda_r$ or on $\nu_1,\dots,\nu_{r-1}$.
Therefore $c_{-}(\lambda,\nu)$ must be linear in $\lambda_1$, and is of
the form
\[
   -p\,\lambda_1 + q
\]
for some constants $p$ and $q$.
This completes the proof.
\end{proof}

The proof of Proposition~\ref{prop:g_formula} will be complete once we
determine the constants $p$, $q$, and $c_{+}$ appearing in
Lemma~\ref{lem:250518}.

\begin{lemma}
\label{lem:25051002}
Let $n$ be even.
Then one has $c_{+}=1$.
\end{lemma}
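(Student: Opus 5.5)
The plan is to pin down $c_{+}$ by evaluating the identity of Lemma~\ref{lem:25051803} at one explicit pair of representations where both sides can be computed by hand. Since $p_{i,\varepsilon}$ is universal, it is independent of the real form and of the representations. By Lemma~\ref{lem:250518} it suffices to treat a single pair $(i,\varepsilon)$, and I take $(i,\varepsilon)=(1,+)$.

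Let $n=2r$ (so $s=r$) and $(G,G')=(O(n+1),O(n))$. Take $\Pi=\mathbf 1$ to be the trivial representation of $G$, $\pi=\mathbf 1$ the trivial representation of $G'$, and $T=\operatorname{id}\colon\mathbb C\to\mathbb C$, which is a nonzero symmetry breaking operator. The infinitesimal characters are
\[
\lambda=\rho_{\mathfrak g}=\bigl(r-\tfrac12,\dots,\tfrac12\bigr),
\qquad
\nu=\rho_{\mathfrak g'}=(r-1,\dots,1,0),
\]
so $\lambda_j=r-j+\tfrac12$ and $\nu_j=r-j$. Then $\Pi\otimes F=F=\mathbb C^{n+1}$ is irreducible, with infinitesimal character $\lambda+e_1$. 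Hence $c_G$ acts on $F$ by a scalar, all other primary components vanish, and every primary component is a genuine eigenspace. Proposition~\ref{prop:25042720} therefore gives $\phi_\lambda^{\lambda+e_1}=\varphi_{1,+}(\lambda)\operatorname{id}_F$. The left-hand side of \eqref{eqn:T_p_ieN} with $N=1$ then equals
\[
\operatorname{pr}_{F\to F'}\bigl(\varphi_{1,+}(\lambda) f_0\bigr)=\varphi_{1,+}(\lambda),
\]
and consequently $p_{1,+}(\rho_{\mathfrak g},\rho_{\mathfrak g'})=\varphi_{1,+}(\rho_{\mathfrak g})$.

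Next I compute both sides at these parameters. For the left side, $\lambda_1^2-\lambda_j^2=(j-1)(2r-j)$, so
\[
\prod_{j=2}^{r}(\lambda_1^2-\lambda_j^2)=(r-1)!\cdot\frac{(2r-2)!}{(r-1)!}=(2r-2)!.
\]
Together with the prefactor $(2\lambda_1+1)\lambda_1=2r\cdot\frac{2r-1}{2}$ this gives $\varphi_{1,+}(\rho_{\mathfrak g})=\tfrac12(2r)!$. For the right side, Lemma~\ref{lem:250518} gives
\[
c_{+}\prod_{j=1}^{r}\bigl(\lambda_1-\nu_j+\tfrac12\bigr)\bigl(\lambda_1+\nu_j+\tfrac12\bigr)
=c_{+}\prod_{j=1}^{r} j\,(2r-j)
=c_{+}\,r!\,\frac{(2r-1)!}{(r-1)!}
=c_{+}\,\tfrac12(2r)!.
\]
Since $\tfrac12(2r)!\neq0$, comparing the two sides yields $c_{+}=1$.

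The only delicate points are bookkeeping: the normalization of the infinitesimal characters of the trivial representations, and the fact that $F$ lies entirely in the $(\lambda+e_1)$-primary component. That $\lambda+e_1=(r+\tfrac12,r-\tfrac32,\dots,\tfrac12)$ is the infinitesimal character of $F$ follows from the formula $\lambda_i=\mu_i+\frac{n+1}{2}-i$ with $\mu=e_1$. No leading-coefficient analysis of $b^{(N)}$ is needed.
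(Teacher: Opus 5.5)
Your proposal is correct and takes the same approach as the paper. You specialize to $(O(2r+1),O(2r))$ with $\Pi=\pi=\mathbf{1}$, observe that $P_{\lambda+e_1}$ is the identity on $\Pi\otimes F\simeq F$, and compare $\varphi_{1,+}(\rho_{\mathfrak g})=r(2r-1)!$ with $c_{+}\,r(2r-1)!$. Your right-hand product over $1\le j\le r$ is the correct one; the paper's display \eqref{eqn:25051002} writes $\prod_{j=2}^{r}$, which is evidently a typo, since the value $c_{+}\,r(2r-1)!$ it states matches your computation.
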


\begin{proof}
Let $n=2r$.
Since $G$ is compact, each primary component of $\Pi \otimes F$ is a
genuine eigenspace of $\mathfrak{Z}(G)$.
Hence, by Proposition~\ref{prop:25042720}, one has
\[
   \phi_{\lambda}^{\lambda+e_1}
   =
   \lambda_1(2\lambda_1+1)
   \prod_{j=2}^{r}
   (\lambda_1-\lambda_j)(\lambda_1+\lambda_j)
   \, P_{\lambda+e_1}.
\]
It then follows from Lemma~\ref{lem:25051803} with $N=1$
and Lemma~\ref{lem:250518} that
\begin{align}
\notag
   &\lambda_1(2\lambda_1+1)
   \prod_{j=2}^{r}
   (\lambda_1-\lambda_j)(\lambda_1+\lambda_j)\,
   (T \otimes \operatorname{pr}_{F \to F'})
   \circ
   P_{\lambda+e_1}(u \otimes f_0)
\\
\label{eqn:25051002}
   &=
   c_{+}
   \prod_{j=2}^{r}
   \Bigl(\lambda_1-\nu_j+\tfrac{1}{2}\Bigr)
   \Bigl(\lambda_1+\nu_j+\tfrac{1}{2}\Bigr)
   \, T(u) \otimes f_0 .
\end{align}

To determine the constant $c_{+}$, we take
$\Pi=\mathbf{1}$ and $\pi=\mathbf{1}$ to be the trivial
one-dimensional representations of $G$ and $G'$, respectively.
In this case, the $\mathfrak{Z}(G)$- and $\mathfrak{Z}(G')$-infinitesimal
characters are given by
\[
   \lambda
   =
   \bigl(r-\tfrac{1}{2},\dots,\tfrac{3}{2},\tfrac{1}{2}\bigr),
   \qquad
   \nu
   =
   (r-1,\dots,1,0),
\]
respectively.
Take
\[
   \sigma := F^{O(n+1)}(e_1)\simeq F=\mathbb{C}^{n+1},
\]
so that the projection
$P_{\lambda+e_1} \colon \Pi \otimes F \to \sigma$
is the identity map.

The left-hand side of \eqref{eqn:25051002} evaluates to
\[
   r(2r-1)!\,
   (T \otimes \operatorname{pr}_{F \to F'})
   \circ
   P_{\lambda+e_1}(u \otimes f_0)
   =
   r(2r-1)!\,T(u) \otimes f_0,
\]
whereas the right-hand side equals
\[
   c_{+}\, r(2r-1)!\,T(u) \otimes f_0.
\]
Comparing the two expressions, we conclude that $c_{+}=1$.
\end{proof}

\begin{lemma}
\label{lem:25051016}
Let $n$ be odd.
Then one has $p=1$ and $q=0$.
\end{lemma}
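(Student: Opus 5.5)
We determine $p$ and $q$ by evaluating the identity of Lemma~\ref{lem:250518} on two explicit test cases, as in the proof of Lemma~\ref{lem:25051002}. Let $n=2r-1$, so $s=r-1$. We work in the compact case $(G,G')=(O(2r),O(2r-1))$, where every primary component of $\Pi\otimes F$ is a genuine eigenspace of $\mathfrak{Z}(G)$. Combining Proposition~\ref{prop:25042720} with Lemma~\ref{lem:25051803} for $N=1$ then gives
\[
2\varepsilon\,h_i(\lambda)\,(T\otimes \operatorname{pr}_{F\to F'})\circ P_{\lambda+\varepsilon e_i}(u\otimes f_0)
=(\varepsilon p\lambda_i+q)\prod_{j=1}^{r-1}\bigl(\lambda_i-\nu_j+\tfrac{\varepsilon}{2}\bigr)\bigl(\lambda_i+\nu_j+\tfrac{\varepsilon}{2}\bigr)\,T(u)\otimes f_0 .
\]
It suffices to find two independent linear conditions on $(p,q)$.

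\textbf{First condition.} Take $\Pi=\pi=\mathbf 1$, $T=\mathrm{id}$, and $(i,\varepsilon)=(1,+)$. Then
\[
\lambda=\rho=(r-1,\dots,1,0),\qquad \nu=\bigl(r-\tfrac32,\dots,\tfrac12\bigr).
\]
This $\lambda$ is singular, since $\lambda_r=0$ and hence $h_1(\lambda)=0$. We therefore avoid dividing by $h_1$ and use the polynomial identity itself: the factor $\lambda_1$ inside $h_1$ is nonzero, but the factor $(\lambda_1-\lambda_r)(\lambda_1+\lambda_r)$ does not vanish either, so singularity only enters through the $\lambda_r$ factor. The cleaner route is to take instead $\Pi=\mathbf 1$ of $O(2r)$ with $\lambda=\rho$. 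In the tensor product $\mathbf 1\otimes F=F$, the component $F=F^{O(2r)}(e_1)$ is the $\lambda+e_1$ piece, so the projection $P_{\lambda+e_1}$ is the identity.

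On the left, $h_1(\rho)$ contains the factor $\lambda_1\cdot\lambda_1^2\cdots$; because of the $\lambda_r=0$ term, compute $h_1(\rho)$ as $\lambda_1\prod_{j\ge2}(\lambda_1^2-\lambda_j^2)$, which equals $(r-1)\prod_{j=2}^{r}\bigl((r-1)^2-(r-j)^2\bigr)$ and is nonzero. On the right, the product is
\[
\prod_j\bigl(r-\tfrac12-\nu_j\bigr)\bigl(r-\tfrac12+\nu_j\bigr),
\]
also nonzero and easily evaluated. Comparing the two sides gives one linear equation in $(p,q)$. I expect it to reduce to $p(r-1)+q=r-1$, after checking that the products match as in Lemma~\ref{lem:25051002}.

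\textbf{Second condition.} Use Lemma~\ref{lem:25051805}(4), or equivalently the case $(1,-)$, with a representation where the minus-translate is nonzero. For example, take $\Pi=F=F^{O(2r)}(e_1)$ and $\pi=\mathbf 1$, with $T=\operatorname{pr}_{F\to F'}$. Then $P_{\lambda-e_1}(F\otimes F)$ is the trivial summand, and $(T\otimes\operatorname{pr})(P_{\mathbf 1}(f_0\otimes f_0))=\tfrac1{2r}$, computed using the invariant $\sum_k f_k\otimes f_k$. Evaluating both sides yields a second equation, which should be $-p\lambda_1+q=-\lambda_1$ with $\lambda_1=r$.

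Together the two equations give $p=1$ and $q=0$. The main obstacle is the bookkeeping of the explicit constants: the value of the $O(2r)$-invariant projection and the products over $j$. As a cross-check, I would also note that the top-degree term in $\lambda_1$ of $p_{1,+}$ can be read off from the leading term of $b^{(N)}$, using Example~\ref{ex:blmdnu}, which forces $p=1$ directly. Then $q$ follows from the second equation alone, and in fact $q=0$ should follow from an extra symmetry $\lambda_i\mapsto-\lambda_i$ combined with $\varepsilon\mapsto-\varepsilon$ and the overall sign.
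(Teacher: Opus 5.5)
Your argument is correct and follows the paper's strategy. You specialize to $(O(2r),O(2r-1))$, combine Proposition~\ref{prop:25042720} with Lemma~\ref{lem:25051803} for $N=1$ and the shape given by Lemma~\ref{lem:250518}, and extract two linear equations in $(p,q)$ from explicit test cases.

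Your first test case ($\Pi=\pi=\mathbf{1}$, $(i,\varepsilon)=(1,+)$) is exactly the paper's Case~1. The equation you expect, $(r-1)p+q=r-1$, is right, because $2\lambda_1\prod_{j\ge 2}(\lambda_1^2-\lambda_j^2)=(r-1)(2r-2)!$ and $\prod_{j=1}^{r-1}j(2r-1-j)=(2r-2)!$.

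Your second test case is a genuine variant.
\begin{itemize}
\item The paper keeps $\varepsilon=+$ and translates $\Pi=F$ up to $F^{O(n+1)}(2e_1)$. There $(T\otimes\operatorname{pr}_{F\to F'})P_{\lambda+e_1}(f_0\otimes f_0)=\frac{n}{n+1}$, which gives $rp+q=r$.
\item You take $\varepsilon=-$ and translate $\Pi=F$ down to the trivial summand, where the value is $\frac{1}{n+1}=\frac{1}{2r}$.
\end{itemize}
Your predicted equation holds. At $\lambda=(r,r-2,\dots,1,0)$ one has $\varphi_{1,-}(\lambda)=-2r\cdot r(2r-2)!$, so the left side equals $-r(2r-2)!$. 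The right side is $(-pr+q)\prod_{j=1}^{r-1}j(2r-1-j)=(-pr+q)(2r-2)!$. Hence $-pr+q=-r$, and together with $(r-1)p+q=r-1$ (determinant $2r-1\neq 0$) this gives $p=1$, $q=0$.

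The paper's choice stays with one sign, so subtracting its two equations yields $p=1$ at once. Yours uses the simplest possible projection, onto the invariant line $\mathbb{C}\sum_k f_k\otimes f_k$, at the cost of invoking the $\varepsilon=-$ form of Lemma~\ref{lem:250518}. The two values $\frac{n}{n+1}$ and $\frac{1}{n+1}$ are complementary, as they must be, since $f_0\otimes f_0$ has no component in $\Lambda^2 F$. (For $r=1$ the $(\lambda-e_1)$-primary component is $\mathbf{1}\oplus\det$ rather than $\mathbf{1}$, but $f_0\otimes f_0$ is symmetric, so the value $\frac{1}{2r}$ is unchanged.)

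Two parts of your write-up should be deleted.
\begin{itemize}
\item The assertion that $h_1(\rho)=0$ because $\lambda_r=0$ is false: the $j=r$ factor of $h_1$ is $(\lambda_1-\lambda_r)(\lambda_1+\lambda_r)=\lambda_1^2\neq 0$. The singularity of $\rho$ (and of $(r,r-2,\dots,1,0)$) is harmless, because Proposition~\ref{prop:25042720} only needs $c_G$ to act by scalars on the primary components. The ``cleaner route'' you then switch to is literally the same test case.
\item The closing cross-checks are unsupported. The coefficient of $\lambda_1^{n}$ in $p_{1,+}$ is the sum of the $\lambda_1^{n}$-coefficient of $b^{(n)}$ and the $\lambda_1^{n-1}$-coefficient of $b^{(n-1)}$, and Example~\ref{ex:blmdnu} supplies these only for $n\le 3$. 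The symmetry $p_{i,+}(\lambda,\nu)=p_{i,-}(s_i\lambda,\nu)$ of Lemma~\ref{lem:25051805} is already built into the form $\varepsilon p\lambda_i+q$, so it places no constraint on $q$.
\end{itemize}
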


\begin{proof}
Let $n=2r-1$. Note that $s=\lfloor \frac{n}2 \rfloor =r-1$.
As in the proof of Lemma~\ref{lem:25051002}, since $G$ is compact,
Proposition~\ref{prop:25042720} yields
\[
   \phi_{\lambda}^{\lambda+e_1}
   =
   2\lambda_1
   \prod_{j=2}^{r}
   (\lambda_1-\lambda_j)(\lambda_1+\lambda_j)
   \operatorname{pr}_{\sigma},
\]
where $\sigma=F^{O(n+1)}(e_1)$.

Again by Lemma~\ref{lem:25051803} with $N=1$
and Lemma~\ref{lem:250518} (with $\varepsilon=+$), we obtain
\begin{multline}
\label{eqn:25051005}
   2\lambda_1
   \prod_{j=2}^{r}
   (\lambda_1-\lambda_j)(\lambda_1+\lambda_j)\,
   (T \otimes \operatorname{pr}_{F \to F'})
   \circ
   P_{\lambda+e_1}(u \otimes f_0)
  \\ =
   (p\lambda_1+q)
   \prod_{j=1}^{r-1}
   \Bigl(\lambda_1-\nu_j+\tfrac{1}{2}\Bigr)
   \Bigl(\lambda_1+\nu_j+\tfrac{1}{2}\Bigr)
   \, T(u) \otimes f_0.
\end{multline}

We examine \eqref{eqn:25051005} in two cases.

\medskip
\noindent
{\bf Case~1.}\enspace
Take $\Pi=\mathbf{1}$, $\pi=\mathbf{1}$, and
$\sigma=F^{O(n+1)}(e_1) \simeq \mathbb{C}^{n+1}$.

In this case, the $\mathfrak{Z}(G)$- and $\mathfrak{Z}(G')$-infinitesimal
characters of $\Pi$ and $\pi$ are given by
\[
   \lambda=(r-1,r-2,\dots,1,0),
   \qquad
   \nu=\bigl(r-\tfrac{3}{2},\dots,\tfrac{3}{2},\tfrac{1}{2}\bigr).
\]
Substituting these into \eqref{eqn:25051005}, we obtain
\[
   (r-1)(2r-2)!
   =
   \bigl((r-1)p+q\bigr)(2r-2)!,
\]
and hence
\begin{equation}
\label{eqn:25051016a}
   r-1=(r-1)p+q.
\end{equation}
\par\noindent
\medskip
\noindent
{\bf Case~2.}\enspace
Take $\Pi=F$, $\pi=\mathbf{1}$, and
$\sigma=F^{O(n+1)}(2 e_1) \simeq S^2(F)/\mathbf{1}$.

In this case, the $\mathfrak{Z}(G)$- and $\mathfrak{Z}(G')$-infinitesimal
characters of $\Pi$ and $\pi$ are given by
\begin{equation}
\label{eqn:lmd_2}
   \lambda=(r,r-2,\dots,1,0),
   \qquad
   \nu=\bigl(r-\tfrac{3}{2},\dots,\tfrac{3}{2},\tfrac{1}{2}\bigr).
\end{equation}
The tensor product representation
$\mathbb{C}^{n+1} \otimes \mathbb{C}^{n+1}$ decomposes as
\begin{align*}
   \mathbb{C}^{n+1} \otimes \mathbb{C}^{n+1}
   &=
   S^2(\mathbb{C}^{n+1}) \oplus \Lambda^2(\mathbb{C}^{n+1})
\\
   &\simeq
   \bigl(
      F^{O(n+1)}(2e_1)
      \oplus
      F^{O(n+1)}(0)
   \bigr)
   \oplus
   F^{O(n+1)}(e_1+e_2).
\end{align*}
According to this decomposition, one has
\[
   f_0 \otimes f_0
   =
   \frac{1}{n+1}
   \Bigl(
      n\,f_0 \otimes f_0
      -
      \sum_{j=1}^{n} f_j \otimes f_j
   \Bigr)
   +
   \frac{1}{n+1}
   \sum_{j=0}^{n} f_j \otimes f_j,
\]
and hence
\[
   P_{\lambda+e_1}(f_0 \otimes f_0)
   =
   \frac{1}{n+1}
   \Bigl(
      n\,f_0 \otimes f_0
      -
      \sum_{j=1}^{n} f_j \otimes f_j
   \Bigr).
\]
Therefore,
\[
   (T \otimes \operatorname{pr}_{F \to F'})
   \circ
   P_{\lambda+e_1}(f_0 \otimes f_0)
   =
   \frac{1}{n+1}\, T(f_0) \otimes f_0.
\]

Substituting \eqref{eqn:lmd_2} into \eqref{eqn:25051005}, we obtain
\[
   2r^2(2r-2)!\,\frac{n}{n+1}\,T(f_0) \otimes f_0
   =
   (rp+q)(2r-1)!\,T(f_0) \otimes f_0.
\]
Since $n=2r-1$, this yields
\begin{equation}
\label{eqn:25051016b}
   r = rp + q.
\end{equation}
Solving the system of linear equations
\eqref{eqn:25051016a} and \eqref{eqn:25051016b}, we conclude that
$p=1$ and $q=0$.
\end{proof}
This completes the proofs of the main theorems.
The argument shows that, although translation functors for higher-rank
orthogonal groups are in general governed by the full structure of the
center $\mathfrak{Z}(G)$, their effect on symmetry breaking operators can
in fact be controlled by explicit polynomials in the Casimir element.
This reduction yields a uniform scalar description of symmetry breaking
and explains the nonvanishing and factorization phenomena established in
Theorems~\ref{thm:25061911a} and~\ref{thm:25061911b} through
Theorem~\ref{thm:25061808}, where these properties are formulated in terms
of polynomials in the Casimir element.
As a consequence, the Stability of Multiplicity—formulated via fences in
the parameter space of \emph{reduced coherent families} and stated in
Theorem~\ref{thm:250515}—follows directly from these scalar identities,
providing a conceptual explanation of multiplicity stability
within each convex region of the parameter space determined by the fences.

\vskip 1pc
\par\noindent
{\bf{Acknowledgement.}}
\newline
The author is grateful to the Institut Henri Poincaré (IHP) in Paris and the Institut des Hautes Études Scientifiques (IHES) in Bures-sur-Yvette, where part of this work was carried out.
He was partially supported by the JSPS
 under the Grant-in Aid for Scientific Research (A) 
 (JP23H00084).  

\medskip
\section*{Statements and Declarations}

\textbf{Competing Interests:}
The author declares that there are no competing interests.

\end{document}